\def\titlerunning#1{\gdef\titrun{#1}}
\def\author#1{\gdef\autrun{\def\and{\unskip, }#1}\gdef\@author{#1}}
\def\address#1{{\def\and{\\\hspace*{18pt}}\renewcommand{\thefootnote}{}%
\footnote {#1}}%
\markboth{\autrun}{\titrun}}
\def\subjclass#1{{\renewcommand{\thefootnote}{}%
\footnote{\emph{Mathematics Subject Classification (2010):} #1}}}
\def\keywords#1{\par\medskip
\noindent\textbf{Keywords.} #1}
\newcommand{\mathbbm}{}
\newtheorem{theorem}{Theorem}[section]
\newtheorem{corollary}[theorem]{Corollary}
\newtheorem{proposition}[theorem]{Proposition}
\newtheorem{lemma}[theorem]{Lemma}
\numberwithin{equation}{section}
\theoremstyle{definition}
\newtheorem{definition}[theorem]{Definition}
\theoremstyle{remark}
\newtheorem{remark}[theorem]{Remark}
\DeclareMathOperator\capacity{cap}
\DeclareMathOperator\Range{Range}
\DeclareMathOperator\supp{supp}
\DeclareMathOperator\dist{d}
\DeclareMathOperator\Cov{Cov}
\DeclareMathOperator\diam{diam}
\def\d{\mathrm{d}}
\newcommand{\mcup}{\textstyle \bigcup\limits}
\newcommand{\vvviiiggg}{\Big}
\newcommand{\bally}{B}
\newcommand{\m}{\eta}
\newcommand{\M}{L}
\newcommand{\Exp}{\mathop{\mathrm{Exp}}}
\newcommand{\eps}{\varepsilon}
\newcommand{\Z}{\mathbb{Z}}
\newcommand{\R}{\mathbb{R}}
\newcommand{\V}{V}
\newcommand{\1}[1]{{\mathbbm{1}}_{#1}}
\newcolumntype{e}{>{\displaystyle}r @{\,} >{\displaystyle}c @{\,} >{\displaystyle}l}
  \newcounter{constant}
  \newcommand{\newconstant}[1]{\refstepcounter{constant}\label{#1}}
  \newcommand{\useconstant}[1]{c_{\textnormal{\tiny \ref{#1}}}}
  \newcounter{iconst}
  \newcommand{\newiconst}[1]{\refstepcounter{iconst}\label{#1}}
  \newcommand{\useiconst}[1]{\gamma_{\textnormal{\tiny \ref{#1}}}}
\def\clap#1{\hbox to 0pt{\hss#1\hss}}
\def\mathclap{\mathpalette\mathclapinternal}
\def\mathclapinternal#1#2{\clap{$\mathsurround=0pt#1{#2}$}}
\def\partext#1{\text{\parbox[t]{\textwidth - 2cm}{#1}}}
\def\arraypar#1{\parbox[c]{\textwidth - 2cm}{\centering #1}}
\def\urltilda{\kern -.15em\lower .7ex\hbox{\~{}}\kern .04em}
\begin{document}


\baselineskip=17pt


\titlerunning{Soft local times and decoupling of interlacements}

\title{Soft local times and decoupling of random interlacements}

\author{Serguei Popov \and Augusto Teixeira}

\date{\today}

\maketitle

\address{S. Popov: Dep. of Statistics, Institute of Mathematics, Statistics and Scientific Computation, University of Campinas -- UNICAMP, rua S\'ergio Buarque de Holanda 651, 13083--859, Campinas SP, Brazil\\
e-mail: {\itshape \texttt{popov@ime.unicamp.br}} \qquad
website: {\href{http://www.ime.unicamp.br/~popov/}{\itshape \texttt{www.ime.unicamp.br/\urltilda popov/}}}
\and
A.Teixeira: Instituto Nacional de Matem\'atica Pura e Aplicada -- IMPA, estrada Dona Castorina 110, 22460--320, Rio de Janeiro RJ, Brazil\\
e-mail: {\itshape \texttt{augusto@impa.br}} \qquad
website: {\href{http://www.impa.br/~augusto/}{\itshape \texttt{w3.impa.br/\urltilda augusto/}}}
}

\subjclass{Primary 60K35; Secondary 60G50, 82C41}

\newiconst{c:main'}
\newiconst{c:main''}
\begin{abstract}
In this paper we establish a decoupling feature of the random interlacement process $\mathcal{I}^u \subset \Z^d$ at level $u$, $d \geq 3$.
Roughly speaking, we show that observations of $\mathcal{I}^u$ restricted to two disjoint subsets~$A_1$ and~$A_2$ of~$\Z^d$ are approximately independent, once we add a sprinkling to the process $\mathcal{I}^u$ by slightly increasing the parameter $u$.
Our results differ from previous ones in that we allow the mutual distance between the sets $A_1$ and $A_2$ to be much smaller than their diameters.
We then provide an important application of this decoupling for which such flexibility is crucial.
More precisely, we prove that, above a certain critical threshold~$u_{**}$, the probability of having long paths that avoid $\mathcal{I}^u$ is exponentially small, with logarithmic corrections for $d=3$.

To obtain the above decoupling, we first develop a general method for comparing the trace left by two Markov chains on the same state space.
This method is based in what we call the soft local time of a chain.
In another crucial step towards our main result, we also prove that any discrete set can be ``smoothened'' into a slightly enlarged discrete set, for which its equilibrium measure behaves in a regular way.
Both these auxiliary results are interesting in themselves and are presented independently from the rest of the paper.

\keywords{Random interlacements, stochastic domination, soft local time, connectivity decay, smoothening of discrete sets}.
\end{abstract}

\section{Introduction and results}
\label{s_intro}

This work is mainly concerned with the decoupling of the random interlacements model introduced by A.S. Sznitman in \cite{Szn09}.
In other words, we show that the restrictions of the interlacement set~$\mathcal{I}^u$ to two disjoint subsets~$A_1$ and~$A_2$ of $\Z^d$ are approximately independent in a certain sense.
To this aim, we first develop a general method, based on what we call \emph{soft local times}, to obtain an approximate stochastic domination between the ranges of two general Markov chains on the same state space.

To apply this coupling method to the model of random interlacements, we first need to modify the sets $A_1$ and $A_2$ through a procedure we call \emph{smoothening}.
This consists of enclosing a discrete set $A \subset \Z^d$ into a slightly enlarged set $A'$, whose equilibrium distribution behaves ``regularly'', resembling what happens for a large discrete ball.

Finally, as an application of our decoupling result, we obtain upper bounds for the connectivity function of the vacant set $\mathcal{V}^u = \Z^d \setminus \mathcal{I}^u$, for intensities~$u$ above a critical threshold $u_{**}$.
These bounds are considerably sharp, presenting a behavior very similar to that of their corresponding lower bounds.

We believe that these four results are interesting in their own. Therefore, we structured the article in a way so they can be read independently from each other.
Below we give a more detailed description of each of these results.

\subsection{Decoupling of random interlacements}
\label{s:decoupl_intro}
The primary interest of this work lies in the study of the random interlacements process, recently introduced by A.-S. Sznitman in~\cite{Szn09}.
The construction of random interlacements was originally motivated by the analysis of the trace left by simple random walk on large graphs, for instance a large discrete torus or a thick discrete cylinder.
Intuitively speaking, this model describes the texture in the bulk left by these trajectories, when the random walk is let run up to specific time scales.

Recently, a great effort has been spent in the study of this model \cite{SS09}, \cite{SS10}, \cite{T10}, \cite{Szn09c}, \cite{Szn11}, \cite{RS12}, and \cite{CP12} as well as in establishing rigorously the relation between random interlacements and the trace left by random walks on large graphs, see \cite{S09b}, \cite{Win08}, \cite{TW10} and \cite{CTW10}.
Recent works have also shown a connection between: random interlacements, the Gaussian free field \cite{Szn12a}, \cite{Szn12b} and cover times of random walks \cite{Bel12}.

Roughly speaking, the model of random interlacements can be described as an Poissonian cloud of doubly infinite random walk trajectories on $\Z^d$, $d \geq 3$. The density of this cloud is governed by an intensity parameter $u > 0$ so that, as $u$ increases, more and more trajectories enter the picture. We denote by $\mathcal{I}^u$ the so called interlacement set, given by the union of the range of these random walk trajectories. Regarding $\mathcal{I}^u$ as a random subset of $\Z^d$, its law $\mathcal{Q}^u$ can be characterized as the only distribution in $\{0,1\}^{\Z^d}$ such that
\begin{equation}
 \label{e:charact}
  \mathcal{Q}^u[K \cap \mathcal{I}^u = \varnothing]
= \exp\{-u \capacity (K)\}, \text{ for every finite $K \subset \Z^d$},
\end{equation}
where $\capacity(K)$ stands for the capacity of the set~$K$ defined in~\eqref{e:cap}, see Proposition~1.5 of~\cite{Szn09} for the characterization~\eqref{e:charact}.



The main difficulty in understanding properties of $\mathcal{I}^u$ is related to its long range dependence.
Let us note for instance that
\begin{equation}
 \label{e:cov}
 \Cov(\1{x \in \mathcal{I}^u}, \1{y \in \mathcal{I}^u})
  \sim \frac{c_{d}u}{\|x-y\|^{d-2}} \quad\text{ as } \quad \| x-y \| \to \infty,
\end{equation}
see \cite{Szn09}, (1.68).
Such a slow decay of correlations imposes several obstacles to the analysis of random interlacements, especially in low dimensions.
Various methods have been developed in order to circumvent this dependence, some of which we briefly summarize below.

\vspace{3mm}

Let us explain what is the type of statement we are after. Consider two subsets~$A_1$ and~$A_2$
of~$\Z^d$ with diameters smaller or equal to~$r$ and within distance at least~$s \geq 1$ from each other. Suppose also that we are given two functions $f_1:\{0,1\}^{A_1} \to [0,1]$ and $f_2:\{0,1\}^{A_2} \to [0,1]$ that depend
only on the configuration of the random interlacements inside the sets~$A_1$ and~$A_2$ respectively. In~\cite{Szn09}, (2.15) it was established that
\begin{equation}
 \label{e:basic}
 \Cov(f_1,f_2) \leq c_{d} u
  \frac{\capacity(A_1) \capacity(A_2)}{s^{d-2}} \leq c'_d u \Big(\frac {r^2}s \Big)^{d-2},
\end{equation}
see also Lemma~2.1 of~\cite{Bel11}. Although the above inequality retains the slow polynomial decay observed in~\eqref{e:cov}, it has been useful in various situations, see for instance Theorem~4.3 of~\cite{Szn09} and Theorem~0.1 of~\cite{Bel11}.

A first improvement on~\eqref{e:basic} appeared already in the pioneer work~\cite{Szn09}, where the author considers what he calls `sprinkling' of the law $\mathcal{I}^u$, see Section~3. In the sprinkling procedure, ``independent paths are thrown in, so as to dominate long range dependence''
of~$\mathcal{I}^u$.

Given two functions $f_1$ and $f_2$ as above, which are non-increasing in~$\mathcal{I}^u$,
the technique of Section~3 of~\cite{Szn09}
allows one to conclude that, roughly speaking,
\begin{equation}
 \label{e:sprink}
 \mathcal{Q}^u[f_1 f_2] \leq \mathcal{Q}^{u(1 + \delta)}[f_1]
\mathcal{Q}^{u(1 + \delta)}[f_2] + c_{d, \alpha} \Big( \frac rs \Big)^{\alpha},
\end{equation}
where $\alpha$ is arbitrary and the sprinkling parameter $\delta$ goes to zero as a polynomial of $(r/s)$. Note that the above represents a big improvement over~\eqref{e:basic}: in exchange to  restricting ourselves to non-increasing functions and introducing a sprinkling term, we obtain a much faster decay in the error term.
Since its introduction, the sprinkling technique has been useful for several problems on random interlacements, see~\cite{Szn09b}, \cite{Szn12}, and~\cite{TW10}.

The most recent result on decoupling bounds for interlacements can be found in~\cite{Szn12} and stands out for several reasons.
First, it generalizes the ideas behind~\cite{SS10} and~\cite{T10} for random interlacements on quite general classes of graphs (besides~$\Z^d$), as long as they satisfy certain heat kernel estimates.
Secondly, the tools developed in~\cite{Szn12} work both to show existence and absence of percolation through a unified framework and give novel results even in the particular case of~$\Z^d$, see also the beautiful applications in~\cite{RS12b} and~\cite{DRS12}.

On the other hand, the results in~\cite{Szn12} were designed having a renormalization scheme in mind. Thus, their use is restricted to bounding the so-called `cascading events', which behave in a certain hierarchical way, see the details in Section~3 of~\cite{Szn12}.

Although the results in~\eqref{e:basic}, \eqref{e:sprink} and \cite{Szn12} complement each other, they suffer from the same drawback, as they implicitly assume that
\begin{equation}
 \label{e:sgeqr}
 \text{the distance between $A_1$ and $A_2$ is at least of the same order as their diameters.}
\end{equation}
This can be a major obstruction in some applications, such as the one we present in Section~\ref{s_appl} on the decay of connectivity.

\vspace{3mm}

Let us now state the main theorem of the present paper, which can be regarded as an improvement on~\eqref{e:sprink}.
Later we will describe precisely how it differs quantitatively from previously known results.

Below, $\useiconst{c:main'}$ and $\useiconst{c:main''}$ are positive constants depending only on the dimension $d$.

\begin{theorem}
\label{t:coroll}
Let $A_1,A_2$ be two non intersecting subsets of~$\Z^d$, with at least one of them being finite. Let $s$ be the distance between~$A_1$ and $A_2$, and $r$ be the minimum of their diameters.
 Then, for all $u>0$ and $\eps\in(0,1)$ we have
\begin{itemize}
 \item[(i)] for any increasing functions $f_1 : \{0,1\}^{A_1}
\to [0,1]$ and $f_2 : \{0,1\}^{A_2} \to [0,1]$,
\begin{equation}
\label{main_incr}
 \mathcal{Q}^u[f_1 f_2] \leq \mathcal{Q}^{(1+\eps)u}[f_1]
\mathcal{Q}^{(1+\eps)u}[f_2]
    + \useiconst{c:main'} (r+s)^d \exp(-\useiconst{c:main''} \eps^2us^{d-2});
\end{equation}
 \item[(ii)] for any decreasing functions $f_1 : \{0,1\}^{A_1} \to [0,1]$ and $f_2 : \{0,1\}^{A_2} \to [0,1]$,
\begin{equation}
\label{main_decr}
 \mathcal{Q}^u [f_1 f_2] \leq \mathcal{Q}^{(1-\eps)u} [f_1]
 \mathcal{Q}^{(1-\eps)u} [f_2]
    + \useiconst{c:main'} (r+s)^d \exp(-\useiconst{c:main''}\eps^2us^{d-2}).
\end{equation}
\end{itemize}
We of course assume the above functions~$f_1$ and~$f_2$ to be measurable (recall that one of the sets $A_1$ or $A_2$ may be infinite).
\end{theorem}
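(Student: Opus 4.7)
The plan is to combine the two auxiliary tools announced in the abstract — the soft local time comparison for Markov chains and the smoothening of discrete sets — in order to couple the trace of $\mathcal{I}^u$ on $A_1\cup A_2$ with the superposition of two \emph{independent} interlacement processes at slightly perturbed levels, one on each $A_i$. Assume without loss of generality that $\diam(A_1)\le\diam(A_2)$, so that $r=\diam(A_1)$ and $A_1$ is the finite set. The first step is to replace each $A_i$ by a mildly enlarged discrete set $\widetilde A_i\supset A_i$ whose normalized equilibrium measure $\bar e_{\widetilde A_i}$ is \emph{regular}, in the sense that any walker starting at distance at least $s/2$ from $\widetilde A_i$ enters $\widetilde A_i$ with a distribution multiplicatively $\bigl(1\pm O((r/s)^{d-2})\bigr)$-close to $\bar e_{\widetilde A_i}$. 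The enlargement is mild enough that $\widetilde A_1,\widetilde A_2$ remain disjoint with distance comparable to $s$, and monotonicity of $f_i$ is preserved trivially since $f_i$ depends only on the coordinates inside $A_i\subset\widetilde A_i$.

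\textbf{Excursion representation and coupling.} By the standard Poissonian description of random interlacements, $\mathcal{I}^u\cap(\widetilde A_1\cup\widetilde A_2)$ is generated by a Poisson cloud of trajectories with intensity $u\cdot\capacity(\widetilde A_1\cup\widetilde A_2)$, each entering $\widetilde A_1\cup\widetilde A_2$ according to the normalized equilibrium measure and then performing an ordinary random walk. After its first visit, a trajectory may oscillate between the two sides, producing a Markov chain whose successive entrances land alternately on one of the two pieces. By the smoothening step, the conditional law of each such successive entrance, given the past, is multiplicatively close to the appropriate reference equilibrium measure on that side. The soft local time machinery — designed precisely to turn pointwise multiplicative proximity of successive transition kernels into a stochastic domination between the ranges of two chains — then couples this excursion chain with two \emph{independent} Poisson excursion processes, one on each $\widetilde A_i$, at intensity $(1+\eps)u$ (for the upper bound) or $(1-\eps)u$ (for the lower bound).

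\textbf{Error estimate and conclusion.} A standard Poissonian large-deviation estimate bounds the probability of coupling failure by
\begin{equation*}
C(r+s)^d\exp\bigl(-c\eps^2 u s^{d-2}\bigr),
\end{equation*}
where the sprinkling parameter $\eps$ absorbs the multiplicative distortion of order $(r/s)^{d-2}$ left over from the smoothening, acting on a trajectory pool of expected size $\sim u\,\capacity(\widetilde A_i)$, and the polynomial prefactor comes from a union bound over possible discrete entrance configurations inside $\widetilde A_1\cup\widetilde A_2$. On the good coupling event, $\mathcal{I}^u\cap A_i$ is dominated by (resp.\ dominates) the corresponding independent interlacement at level $(1+\eps)u$ (resp.\ $(1-\eps)u$) restricted to $A_i$, so the desired inequalities follow from the independence of the two approximating clouds together with the monotonicity of $f_1,f_2$.

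\textbf{Main obstacle.} The delicate point is the coupling step. Unlike previous decoupling arguments, which implicitly assumed $s\gtrsim r$ and could simply neglect trajectories crossing between $A_1$ and $A_2$, here the same trajectory may traverse the two sets many times, so one is forced to control an entire infinite chain of returning excursions rather than a single pair of walks. It is precisely the \emph{multiplicative} (as opposed to total-variation) comparison delivered by the smoothening that feeds naturally into the soft local time device and produces the sharp $\eps^2 u s^{d-2}$ exponent; any attempt to use a cruder total-variation comparison would lose the quadratic dependence on $\eps$ and compromise the applicability of the bound in the regime $s\ll r$ that motivates the theorem.
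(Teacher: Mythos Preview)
Your overall strategy---smoothen the sets, decompose trajectories into excursions, couple via soft local times, and bound the failure probability by a large-deviation estimate---matches the paper's route, and your identification of the main obstacle (many crossings between $A_1$ and $A_2$ when $s\ll r$) is correct. But your description of what the smoothening step delivers is wrong in a way that would break the argument precisely in the regime the theorem targets.

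You claim that after smoothening, a walker starting at distance $\ge s/2$ from $\widetilde A_i$ enters with a law multiplicatively $(1\pm O((r/s)^{d-2}))$-close to $\bar e_{\widetilde A_i}$. When $s\ll r$ the quantity $(r/s)^{d-2}$ is large, not small, so the statement is vacuous; and in any case a walker at distance $s/2$ from a set of diameter $\sim r$ enters with a law heavily biased toward the nearby portion of the boundary, nothing like $\bar e$. (For the possibly infinite $A_2$ the claim does not even make sense.) Requiring the entrance law to be globally close to $\bar e$ is exactly the ``Poissonization'' picture of earlier decoupling arguments that forces $s\gtrsim r$; see Remark~\ref{r:bumps}.

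What the paper's smoothening (Proposition~\ref{p:fatA}) actually provides is a purely \emph{local} regularity: for the enlarged set $A^{(s)}$, entrance probabilities $P_x[X_{H_{A^{(s)}}}=y]$ are uniformly of order $s^{-(d-1)}$ for $x$ at distance $\gtrsim s$, and the ratio $P_x[X_H=y]/P_x[X_H=y']$ is bounded by a fixed dimensional constant whenever $y,y'\in\partial A^{(s)}$ are within $cs$ of each other. No proximity to $\bar e$ is asserted or needed. This local comparability is precisely the input to the exponential-moment bound for soft local times (Theorem~\ref{t:expmoment}): a large accumulated soft local time at one boundary point forces a comparably large accumulation on a neighbourhood of $\mu$-measure $\gtrsim s^{d-1}$, which is then confronted with a Poisson count. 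The exponent $\eps^2 u s^{d-2}$ emerges from first- and second-moment bounds on the per-trajectory soft local time (Lemma~\ref{l_expect_Y}) together with this tail estimate, not from absorbing any $(r/s)^{d-2}$ distortion into the sprinkling.
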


The above theorem is a direct consequence of the slightly more general Theorem~\ref{t_main}. Note that the opposite inequalities to \eqref{main_incr} and \eqref{main_decr} follow without error terms (and with $\eps = 0$) by the FKG inequality, which was proved for random interlacements in~\cite{Tei09}, Theorem~3.1.

Let us now stress what are the main improvements offered by the above bounds over previously known results.
First, there is no requirement that~$s$ should be larger than~$r$ as in~\eqref{e:sgeqr} (and again, one of the sets may even be infinite).
Moreover, these error bounds feature an explicit and fast decay on~$s$, even as~$\eps=\eps(s,r)$ goes (not too rapidly) to zero.
We include in Remark~\ref{r:optimal} some observations on how close to optimal one can expect \eqref{main_incr} and \eqref{main_decr} to be.

\subsection{Connectivity decay}
\label{s:connectivity_intro}
As an application of Theorem~\ref{t:coroll}, we establish a result on the decay of connectivity on the vacant set $\mathcal{V}^u = \Z^d \setminus \mathcal{I}^u$. More precisely, for $u$ large enough (see Theorem~\ref{t:connect} for details), for $d \geq 4$,
\begin{equation}
  \label{e:d4_display}
  \mathcal{Q}^u[0 \xleftrightarrow{\mathcal{V}^u} x] \leq \useiconst{c:connect}
  \exp \{ -\useiconst{c:connect2} \|x\|\},
  \text{ for every $x \in \Z^d$.}
\end{equation}
where $\useiconst{c:connect}$ and $\useiconst{c:connect2}$ depend only on $d$. If $d = 3$ and $u$
is large enough, then for any $b>1$ there exist $\useiconst{c:connect_dim3} = \useiconst{c:connect_dim3}(u,b)$ and
$\useiconst{c:connect2_dim3} = \useiconst{c:connect2_dim3}(u,b)$ such that
\begin{equation}
\label{e:d3_display}
 \mathcal{Q}^u[0 \xleftrightarrow{\mathcal{V}^u} x] \leq \useiconst{c:connect_dim3} \exp \Big\{ -\useiconst{c:connect2_dim3} \frac{\|x\|}{\log^{3b}\|x\|} \Big\}, \text{ for every $x \in \Z^3$.}
\end{equation}
see Theorem~\ref{t:connect} and Remark~\ref{r:decay} for more details.

Let us stress that the above bounds greatly improve on the previously known results, proved in Theorem~0.1 of \cite{SS10}. There, the authors establish similar bounds but with $\| x \|$ replaced by $\|x\|^\rho$ for some unknown exponent $\rho \in (0,1)$.
Our bounds on the other hand are considerably sharp, as they closely resemble the corresponding lower bounds, see Remark~\ref{r:decay} for details.

Note that the exponential decay in~\eqref{e:d4_display} is also observed in independent percolation models, see for instance Theorem~(5.4) of \cite{Gri99}, p.88 and \cite{Men86}.
However, due to the strong dependence present in $\mathcal{V}^u$, its validity was at first not obvious to the authors.
For one reason, it is known that the logarithmic factor in~\eqref{e:d3_display} cannot be dropped, see Remark~3.2 below.
Similar types of non exponential decays in dependent percolation models can be found for instance in~(1.65) and~(2.21) of~\cite{Szn09} and Remark~3.7 2) of~\cite{T10}.

Finally we would like to stress that our proof of \eqref{e:d4_display}--\eqref{e:d3_display} is general enough in the sense that it could be adapted for other dependent percolation models, as long as they satisfy a suitable decoupling inequality.
See the discussion in Remark~\ref{r:general_perc}.

\subsection{Soft local times}
In Section~\ref{s:simul} we develop a technique to prove approximate stochastic domination of the trace left by a Markov chain on a metric space.
This is an important ingredient in proving Theorem~\ref{t:coroll} and we also expect it to be useful in future applications. To illustrate this technique, consider an irreducible Markov chain $(Z_i)_{i \geq 1}$ on a finite state space~$\Sigma$ having~$\pi$ as its unique stationary measure.

A typical model to keep in mind is a random walk on a torus that jumps from~$z$ to a uniformly chosen point in the ball centered in $z$ with radius $k$. By transitiveness, the uniform distribution $\pi$ is clearly invariant. Intuitively speaking, if we let this Markov chain run for a long time~$t$, we expect the law of covered set $\{Z_1, \dots, Z_t\}$ to be ``reasonably close'' to that of a collection $\{W_1, \dots, W_t\}$ of i.i.d. points in~$\Sigma$ distributed according to~$\pi$.
This is made precise in the following result, which is a consequence of Corollary~\ref{c:coupleZ}.

\begin{proposition}
\label{p:domainMarkov}
Let $(Z_i)_{i \geq 1}$ be a Markov chain on a finite set $\Sigma$, with transition probabilities $p(z,z')$, initial distribution~$\pi_0$, 
and stationary measure~$\pi$.
Then we can find a coupling~$\mathbb{Q}$ between~$(Z_i)$ and an i.i.d.\ collection~$(W_i)$ (with law~$\pi$), in such a way that for any $\lambda > 0$ and $t \geq 0$,
\begin{equation}
  \label{e:domainMarkov}
  \begin{split}
    \mathbb{Q}\big[\{Z_1, \dots, Z_t\} & \subset \{W_1, \dots, W_R\}\big]\\
    & \geq \mathbb{Q}\Big[\xi_0\pi_0(z)+ \sum_{j = 1}^{t-1} \xi_j p(Z_j,z) \leq\lambda \pi(z), \text{ for all $z \in \Sigma$}\Big],
  \end{split}
\end{equation}
where $\xi_i$ are i.i.d.\ $\mathrm{Exp}(1)$ random variables, independent of $R$, a $\mathrm{Poisson}(\lambda)$-distributed random variable.
\end{proposition}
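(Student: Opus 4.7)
\medskip

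The plan is to realize both processes on a common Poisson point process via the soft local time construction that drives Section~\ref{s:simul}. Let $\eta = \sum_k \delta_{(z_k,v_k)}$ be a Poisson point process on $\Sigma \times \R_+$ with intensity measure $\pi \otimes \mathrm{Leb}$, defined on some probability space $(\Omega,\mathbb{Q})$. Everything will be read off of $\eta$.

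First I would construct the Markov chain $(Z_j)$ inductively from $\eta$ together with i.i.d.\ $\mathrm{Exp}(1)$ increments $\xi_1,\xi_2,\dots$ as follows. Starting from $Z_1$ drawn from the initial distribution, suppose $Z_1,\dots,Z_j$ and a soft local time function
\[
G_j(z) \;=\; \sum_{i=1}^{j} \xi_i\,\frac{p(Z_i,z)}{\pi(z)}, \qquad z\in\Sigma,
\]
have been produced, consuming exactly those points of $\eta$ lying weakly below the graph of $G_j$. To generate $Z_{j+1}$, one tilts the graph by the ``ray'' $t \mapsto G_j(z) + t\,p(Z_j,z)/\pi(z)$ and lets $\xi_{j+1}$ be the smallest $t>0$ for which the tilted graph first touches an unused point of $\eta$; that point's $\Sigma$-coordinate is declared to be $Z_{j+1}$. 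The key structural fact, which is exactly the content of the general soft local time lemma in Section~\ref{s:simul} (applied with reference measure $\pi$ and densities $p(Z_j,\cdot)/\pi(\cdot)$ that integrate to $1$ against $\pi$), is that the $\xi_j$'s are i.i.d.\ $\mathrm{Exp}(1)$, independent of $Z_1,\dots,Z_j$, and that $Z_{j+1}$ conditionally on the past has law $p(Z_j,\cdot)$. Hence $(Z_j)$ is the desired Markov chain with stationary measure $\pi$.

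Next I would construct the i.i.d.\ reference sequence $(W_i)$ from the same $\eta$. Let $R$ be the (random) number of points of $\eta$ falling in the slab $\Sigma \times [0,\lambda]$. Since $\eta$ has intensity $\pi\otimes\mathrm{Leb}$ and $\pi$ is a probability measure, $R$ is $\mathrm{Poisson}(\lambda)$-distributed, and conditionally on $R$ the $\Sigma$-coordinates of these points are i.i.d.\ with law $\pi$; call them $W_1,\dots,W_R$. By construction $R$ is independent of the increments $\xi_1,\xi_2,\dots$ used to generate the chain (the $\xi_j$'s do not ``see'' the slab height $\lambda$; they only depend on the relative ordering of points consumed by the soft-local-time procedure, which is independent of how many additional points lie below $\lambda$). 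This supplies the coupling $\mathbb{Q}$ referred to in the statement.

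Finally I would verify the inclusion on the event
\[
A \;=\; \Bigl\{\,\sum_{j=1}^{t} \xi_j\,p(Z_j,z)\;\le\;\lambda\,\pi(z)\text{ for all }z\in\Sigma\,\Bigr\}\;=\;\{G_t(z)\le\lambda\text{ for all }z\in\Sigma\}.
\]
By the soft local time construction, each $Z_j$ (for $j\le t$) equals the $\Sigma$-coordinate of a point of $\eta$ whose height is at most $G_j(Z_j)\le G_t(Z_j)$; hence on $A$ every such point lies in the slab $\Sigma \times [0,\lambda]$, and therefore its $\Sigma$-coordinate is one of $W_1,\dots,W_R$. This gives $\{Z_1,\dots,Z_t\}\subset\{W_1,\dots,W_R\}$ on $A$, proving \eqref{e:domainMarkov}.

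The main (and really the only) obstacle is the structural soft-local-time statement used in the first step — that reading points of $\eta$ off of the tilted graphs produces a genuine Markov chain with the prescribed transition kernel and yields genuinely i.i.d.\ $\mathrm{Exp}(1)$ increments. This is essentially the reason Section~\ref{s:simul} exists, and once that lemma is in hand the rest is a short bookkeeping argument about which points of $\eta$ lie in the slab $\Sigma \times [0,\lambda]$.
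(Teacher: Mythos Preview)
Your approach is exactly the paper's: the proposition is stated there as a direct consequence of Corollary~\ref{c:coupleZ}, i.e., run the soft local time construction of Proposition~\ref{p:xiGi} with reference measure~$\pi$ (so that the transition densities are $g(z,z')=p(z,z')/\pi(z')$), take the threshold function $v\equiv\lambda$, and read the $W$'s off the slab $\Sigma\times[0,\lambda]$. Your verification of the inclusion on the event $\{G_t\le\lambda\}$ is the right one and matches~\eqref{e:couplesingle}.

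One slip to flag: your parenthetical claim that ``$R$ is independent of the increments $\xi_1,\xi_2,\dots$'' is false. Both are deterministic functionals of the same Poisson process~$\eta$; already in the trivial case $|\Sigma|=1$, $\xi_1$ is the lowest point of~$\eta$ and $R$ is the number of points below~$\lambda$, so $\{\xi_1>\lambda\}=\{R=0\}$. Fortunately this independence is irrelevant to~\eqref{e:domainMarkov}: $R$ does not appear on the right-hand side, and what \emph{is} needed (and is supplied by Proposition~\ref{p:xiGi}) is that the $\xi_j$'s are i.i.d.\ $\mathrm{Exp}(1)$ and independent of $(Z_j)$. You should simply delete that remark. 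A second, purely notational wrinkle: your displayed formula $G_j(z)=\sum_{i\le j}\xi_i\,p(Z_i,z)/\pi(z)$ is off by one relative to the recursion you describe in the next sentence (pushing up by $p(Z_j,\cdot)/\pi$ to find $Z_{j+1}$ gives $G_{j+1}=G_j+\xi_{j+1}p(Z_j,\cdot)/\pi$, i.e., $G_j=\sum_{i\le j}\xi_i\,p(Z_{i-1},\cdot)/\pi$). This matches the indexing in Proposition~\ref{p:xiGi} and also the slightly loose indexing in the statement of Proposition~\ref{p:domainMarkov} itself, so it is harmless, but worth writing consistently.
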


Observe that the above statement can have interesting consequences in bounding the hitting time of a given subset of $\Sigma$, see \eqref{e:hittingtime} for a precise definition.

We call the sum $\sum_j \xi_j p(Z_j,z)$ the \emph{soft local time} of the chain $Z_j$. To justify this notation, observe that instead of counting the number of visits to a fixed site (which corresponds to the usual notion of local time), we are summing up the \emph{chances} of visiting such site, multiplied by i.i.d.\ mean-one positive factors. See also Theorem~\ref{t:expectsingle}.

In Remark~\ref{r:bumps} we describe the main advantages of Proposition~\ref{p:domainMarkov} over previous domination techniques and how it allows us to drop the assumption \eqref{e:sgeqr}.

Later in Section~\ref{s:simul}, we establish general estimates on the expectation, variance and exponential moments of the soft local time $\sum_j \xi_j p(Z_j,z)$.
These are based on regularity assumptions on the transition probabilities $p(\cdot, \cdot)$ and are valuable when estimating the right hand side of~\eqref{e:domainMarkov} by means of exponential Chebyshev's inequalities, see Theorems~\ref{t:expectsingle}, \ref{t:2ndmoment} and~\ref{t:expmoment}.

Now, we comment on the main method employed to prove results such as Proposition~\ref{p:domainMarkov} above.
One can better visualize the picture in a continuous space, so we use another example to illustrate the method:
 assume that we are given a sequence
of (not necessarily independent nor Markovian)
 random variables $S_1, S_2, S_3, \ldots$
taking values in the interval $[0,1]$, and let~$T$ be a finite stopping time.
As in~\eqref{e:domainMarkov}, we attempt to dominate this process by a sequence $U_1,\ldots,U_N$, where~$(U_k)$ are i.i.d.\ Uniform$[0,1]$ random variables, and~$N$ is a Poisson random variable independent of $(U_k)$.
More precisely, we want to construct a coupling between the two sequences in such a way that
\begin{equation}
\label{eq:softlocal}
 \{S_1, \ldots, S_T\} \subseteq \{U_1,\ldots,U_N\}
\end{equation}
with probability close to one. We assume that the law of~$S_k$
conditioned on $S_1,\ldots,S_{k-1}$ is absolutely continuous with respect to the Lebesgue measure on $[0,1]$, see \eqref{e:transitone}.

Our method for obtaining such a coupling is illustrated on Figure~\ref{f:softlocal}.
Consider a Poisson point process in $[0,1]\times \R_+$ with rate~$1$.
Then, one can obtain a realization of the $U$-sequence by simply retaining the first coordinate of the points lying below a given threshold (the dashed line in Figure~\ref{f:softlocal}) corresponding to the parameter of the Poisson random variable~$N$.

Now, in order to obtain a realization of the $S$-sequence using the same Poisson point process, one proceeds as follows:
\begin{itemize}
\item first, take the density $g(\cdot)$ of~$S_1$ and multiply it by
  the unique positive number $\xi_1$ so that there is exactly one
  point of the Poisson process lying on the graph of $\xi_1 g$ and
  nothing strictly below it;
\item then consider the conditional density $g(\cdot\mid S_1)$
  of~$S_2$ given~$S_1$ and find the smallest constant $\xi_2$ so that
  exactly two points lie underneath $\xi_2 g(\cdot\mid S_1) + \xi_1
  g(\cdot)$;
\item continue with $g(\cdot\mid S_1,S_2)$, and so on, up to time~$T$,
  as shown on Figure~\ref{f:softlocal}.
\end{itemize}

In Proposition~\ref{p:xiGi}, we show that the collection of points obtained through the above procedure has the same law as $(S_1, S_2, \dots)$ and is independent of the random variables $\xi_i$, which are i.i.d. with law Exp($1$).
We call the sum $\xi_1 g(\cdot) + \xi_2 g(\cdot\mid S_1) + \cdots$ the \emph{soft local time} of the process~$S$ (which coincides with the sum in the right-hand side of~\eqref{e:domainMarkov} in the Markovian case).
Clearly, if the soft local time (the gray area on the picture) is below the dashed line, then the domination in~\eqref{eq:softlocal} holds.
To obtain the probability of a successful coupling, one has to estimate the probability that the soft local time lies below the dashed line.
In several cases, this reduces to a large deviations estimate.
\begin{figure}
\centering \includegraphics{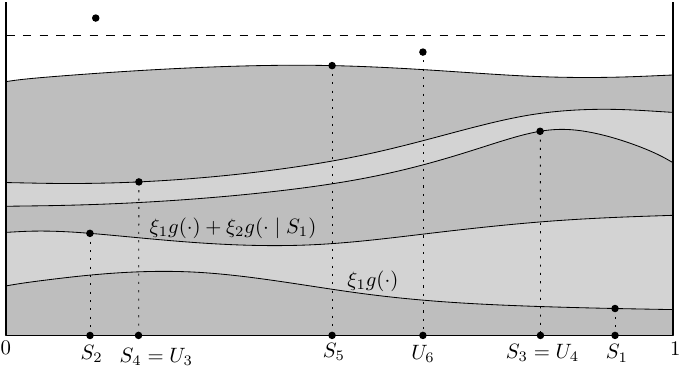}
  \caption{Soft local times: the construction of the process~$S$
  (here, $T=5$, $N=6$,
   $U_k=S_k$ for $k=1,2,5$);
it is very important to observe that the points of the
  two processes need not necessarily appear in the same order with respect to the vertical axis, see Remark~\ref{r:bumps}.}
  \label{f:softlocal}
\end{figure}

After developing a general version of this technique in Section~\ref{s:simul}, we adapt this theory to random interlacements in Section~\ref{s:altern_constr}.
More precisely, we present an alternative construction of the interlacement set $\mathcal{I}^u$ restricted to some $A \subset \Z^d$.
In this construction, we split each trajectory composing $\mathcal{I}^u$ into a collection of excursions in and out of $A$.
This induces a Markov chain on the space of excursions and the techniques of soft local times helps us control the range of such soup.

After the conclusion of this article, we learned that a technique similar to the soft-local times was introduced in the special case $\Sigma = (0,1) \subset \mathbb{R}$ in order to study local minima of the Brownian motion. see  Claim~1.5 of \cite{EJP309}.

We believe that the method of soft local time can be useful in other contexts besides random interlacements.
For example, when considering a random walk trajectory on a finite graph (such as a torus or a discrete cylinder), one can naturally be interested in the degree of independence in the pictures left by the walker on disjoint subsets of the graph.
The approach followed in this paper is likely to be successful in this situation as well.
We also believe this technique could give alternative proofs or generalize results on the coupling of systems of independently moving particles, see Proposition~5.1 of \cite{PSSS11} for an example of such a statement.

\subsection{Smoothening of discrete sets}
\label{s:smoothening_intro}
As we mentioned before, in order to estimate the probability of having a successful coupling using the soft local times technique, we need some regularity conditions on the transition densities of the Markov chain.
When applying this to the excursions composing the random interlacements, this translates into a condition on the regularity of the entrance distributions on the sets $A_1$ and $A_2$, which may not hold in general (picture for instance a set with sharp points).

To overcome this difficulty, we develop a technique to enlarge the original discrete sets $A_1$ and $A_2$ into slightly bigger discrete sets with ``sufficiently smooth'' boundaries, so that their entrance probabilities satisfy the required regularity conditions.

The exact result we are referring to is given in Proposition~\ref{p:fatA}, but we provide here a small preview of its statement.
There exist positive constants $c,c',c'',s_0$ (depending only on dimension) such that for any $s\geq s_0$ and any finite set $A\subset \Z^d$, there exist a set $A^{(s)}$ with $A \subseteq A^{(s)} \subseteq B(A,s)$ and
\begin{equation}
\label{eq:pre_smooth}
 P_x [X_{H} = y] \leq c P_x [X_{H} = y'],
\end{equation}
for all $y,y'\in\partial A^{(s)}$ with $\|y - y'\| \leq c''s$, and all $x$ such that $\|x-y\|\geq c's$.
Where $X$ is the simple random walk and~$H$ is the hitting time of the set~$A^{(s)}$.
That is, the entrance measure to the set~$A^{(s)}$ is ``comparable'' in close sites of the boundary, as long as the starting point of the random walk is sufficiently far away.

It is important to observe that for example a large (discrete) ball fulfills the above property, 
while a large box does not, since its entrance probabilities at the faces are typically much 
smaller than those at the corners (to see this, observe that
one can obtain using the arguments similar e.g.\ to the proof of Theorem~1.4 of~\cite{MP13}
that the harmonic measure at a corner of the box is at least $O(n^{-\gamma})$ for some 
$\gamma<1$, while for ``generic'' sites on the faces it is $O(n^{-1})$).

\subsection{Plan of the paper}
\label{s:plan_intro}
The paper is organized in the following way.
In Section~\ref{ss:ri} we formally define the model of random interlacements, and state our main decoupling result.
In Section~\ref{s_appl}, we state the connectivity decay stated in \eqref{main_decr} and \eqref{main_incr}, see Theorem~\ref{t:connect}.
In Section~\ref{s:simul} we present a general version of the method of soft local times.
Then, in Section~\ref{s:altern_constr} this method is used to introduce an alternative construction of random interlacements, which is better suited for decoupling configurations on disjoint sets.
In the same section we reduce the proof of our main Theorem~\ref{t_main} to a large deviations estimate for the soft local time of excursions.
In Section~\ref{s:proof}, we estimate the probability of these large deviation events and conclude the proof of Theorem~\ref{t_main} under a set of additional assumptions on the entrance measures of $A_{1,2}$.
While this set of assumptions may not be satisfied for arbitrary $A_{1,2}$, we show in Section~\ref{s:reduction} that this is not really an issue, as one can always enlarge slightly the sets of interest (with the procedure referred above as smoothening) so that these modified sets satisfy the necessary regularity assumptions.
Before going to (quite technical) Section~\ref{s:reduction}, 
in Section~\ref{s:connectivity} we prove the result on the decay of connectivity for the vacant set, corresponding to \eqref{e:d4_display} and \eqref{e:d3_display}.

\section{Random interlacements: formal definitions and main result}
\label{ss:ri}

In this paper, we use the following convention concerning constants:
 $c_1,c_2,c_3,\ldots$, as well as
$\gamma_1,\gamma_2,\gamma_3,\ldots$
denote strictly positive constants depending
only on dimension~$d$. Dependence of constants
on additional parameters appears in the notation. For example, $c_\alpha$
denotes a constant depending only on~$d$ and $\alpha$.
Also $c$-constants are ``local'' (used only in a small neighborhood
of the place of the first appearance) while $\gamma$-constants
are ``nonlocal'' (they appear in propositions and ``important'' formulas).

Let us now introduce some notation and describe the model of random interlacements. In addition, we recall some useful facts concerning the model.

For $a \in \mathbb{R}$, we write $\lfloor a \rfloor$ for the largest integer smaller or equal to $a$ and recall that
\begin{equation}
\label{e:floorconvex}
\lfloor t a+(1-t)b \rfloor \in [\min \{a,b\},\max \{a,b\}], \text{ for all } a,b \in \mathbb{Z} \text{ and } t \in [0,1].
\end{equation}

We say that two points $x,y \in \mathbb{Z}^d$ are neighbors if they are at Euclidean distance (denoted by $\|\cdot\|$) exactly~$1$ (we also write
$x \leftrightarrow y$ when~$x$ and~$y$ are neighbors).
This induces a graph structure and a notion of connectedness in $\mathbb{Z}^d$.

If $K \subset \mathbb{Z}^d$, we denote by $K^c$ its complement and by $\bally(K,r)$ the $r$-neighborhood of $K$ with respect to the Euclidean distance, i.e. the union of the balls $\bally(x,r)$ for $x \in K$. The diameter of $K$ (denoted by $\diam(K)$) is the supremum of $\| x - y \|_\infty$ with $x,y \in K$,
where $\|\cdot\|_\infty$ is the maximum norm.
Let us define its internal boundary
 $\partial K = \{x \in K; x \leftrightarrow y$ for some
$y \in K^c\}$.

In this article the term \textit{path} always denotes finite, nearest neighbor paths, i.e.\ some
$\mathcal{T}:\{0,\dots,n\} \rightarrow \mathbb{Z}^d$ such that $\mathcal{T}(l) \leftrightarrow \mathcal{T}(l+1)$ for $l = 0,\dots, n-1$.
In this case we say that the length of $\mathcal{T}$ is~$n$.

Let us denote by~$W_+$ and~$W$ the spaces of infinite, respectively doubly infinite, transient trajectories
\begin{equation}
 \label{e:WW}
 \begin{split}
  W_+ = \Big\{w:\mathbb{Z}_+ \rightarrow \mathbb{Z}^d; w(l) \leftrightarrow w(l+1), \text{ for each } l \geq 0 \text{ and } \| w(l) \| \xrightarrow[l \rightarrow \infty]{} \infty \Big\},\\
  W = \Big\{w:\mathbb{Z} \rightarrow \mathbb{Z}^d; w(l) \leftrightarrow w(l+1), \text{ for each } l \in \mathbb{Z} \text{ and } \| w(l) \| \xrightarrow[|l| \rightarrow \infty]{} \infty \Big\}.
 \end{split}
\end{equation}
We endow these spaces with the $\sigma$-algebras $\mathcal{W}_+$ and $\mathcal{W}$ generated by the coordinate maps $\{X_n\}_{n \in \mathbb{Z}_+}$ and $\{X_n\}_{n \in \mathbb{Z}}$.

Let us also introduce the entrance time of a finite set $K \subset \mathbb{Z}^d$
\begin{equation}
\label{e:entranceexit}
H_K(w) = \inf \{k ; X_k(w) \in K\}, \text{ for } w \in W_{(+)}, \\
\end{equation}
and for $w \in W_+$, we define the hitting time of $K$ as
\begin{equation}
\label{e:hittingtime}
\widetilde{H}_K(w) = \inf \{k \geq 1; X_k(w) \in K\}.
\end{equation}

Let $\theta_k:W \rightarrow W$ stand for the time shift given by $\theta(w)(\cdot) = w(\,\cdot + k)$ (where $k$ could also be a random time).

For $x \in \mathbb{Z}^d$, (recall that $d \geq 3$) we can define the law $P_x$ of a simple random walk starting at~$x$ on the space $(W_+,\mathcal{W}_+)$. If~$\rho$ is a measure on $\mathbb{Z}^d$, we write $P_\rho = \sum_{x\in \mathbb{Z}^d} \rho(x) P_x$.


Let us introduce, for a finite $K \subset \mathbb{Z}^d$, the equilibrium measure
\begin{equation}
e_K(x) = \1{x \in K} P_x[\widetilde H_K = \infty], \text{ for } x \in \mathbb{Z}^d,
\end{equation}
the capacity of $K$
\begin{equation}
\label{e:cap}
\text{cap}(K) = e_K(\mathbb{Z}^d)
\end{equation}
and the normalized equilibrium measure
\begin{equation}
\label{e:normalizedeK}
\overline e_K (x) = e_K(x)/ \text{cap}(K), \text{ for } x \in \mathbb{Z}^d.
\end{equation}

We mention the following bound on the capacity of a ball of radius $r\geq 1$
\newconstant{c:cap}
\newconstant{c:cap1}
\begin{equation}
\label{e:estimatecap}
\capacity(\bally(0,r)) \asymp r^{d-2}
\end{equation}
 see Proposition~6.5.2 of~\cite{LL10} (here and in the sequel we write
$f(r)\asymp g(r)$ when $\useconstant{c:cap} g(r)\leq f(r)
 \leq \useconstant{c:cap1} g(r)$ for strictly positive
constants $\useconstant{c:cap},\useconstant{c:cap1}$ depending only on the dimension).

Let $W^*$ stand for the space of doubly infinite trajectories in~$W$
modulo time shift,
\begin{equation}
W^* = W/\sim \mbox{, where } w \sim w' \mbox{ if } w(\cdot) = w'(k + \cdot), \mbox{ for some } k \in \mathbb{Z},
\end{equation}
endowed with the $\sigma$-algebra
\begin{equation}
\mathcal{W}^* = \{ A \subset W^*; (\pi^*)^{-1}(A) \in \mathcal{W}\},
\end{equation}
which is the largest $\sigma$-algebra making the canonical projection $\pi^*:W \rightarrow W^*$ measurable. For a finite set $K \subset \mathbb{Z}^d$, we denote as $W_K$ the set of trajectories in~$W$ which meet the set~$K$ and define $W^*_K = \pi^*(W_K)$.

Now we are able to describe the intensity measure of the Poisson point process which governs the random interlacements.

For a finite set $K \subset \mathbb{Z}^d$, we consider the measure $Q_K$ in $(W, \mathcal{W})$ supported in $W_K$ such that, given $A,B \in \mathcal{W}_+$ and $x \in K$,
\begin{equation}
\label{e:QK}
Q_K[(X_{-n})_{n \geq 0} \in A, X_0 = x, (X_n)_{n \geq 0} \in B] = P_x[A\mid\tilde H_K = \infty]P_x[B] e_K(x).
\end{equation}
Theorem~1.1 of \cite{Szn09} establishes the existence of a unique $\sigma$-finite measure $\nu$ in $W^*$ such that,
\begin{equation}
\label{e:nuQK}
1_{W_K^*}\cdot \nu = \pi^* \circ Q_K, \text{ for any finite set $K \subset \mathbb{Z}^d$.}
\end{equation}
The above equation is the main tool to perform calculations on random interlacements.

We then introduce the spaces of point measures on $W^* \times \mathbb{R}_+$ and $W_+ \times \mathbb{R}_+$
\begin{equation}
 \label{e:Omega}
 \begin{split}
  \Omega & = \bigg\{ \; \;\omega = \smash{\sum_{i\geq 1}}
\delta_{(w^*_i,u_i)} \Big| \; \;
  \begin{split}
   & w^*_i \in W^*, u_i \in \mathbb{R}_+ \mbox{ and }
\omega(W^*_K \times [0,u]) < \infty\\
   & \mbox{ for every finite } K \subset \mathbb{Z}^d \mbox{ and } u\geq 0.
  \end{split} \;\; \bigg\}
 \end{split}
\end{equation}
and endowed with the $\sigma$-algebra $\mathcal{A}$ generated by the evaluation maps $\omega \mapsto \omega(D)$ for $D \in \mathcal{W}^* \otimes \mathcal{B}(\mathbb{R}_+)$. Here $\mathcal{B}(\cdot)$ denotes the Borel $\sigma$-algebra.

We let $\mathbb{P}$ be the law of a Poisson point process on $\Omega$ with
intensity measure $\nu \otimes \d u$, where $\d u$ denotes the Lebesgue measure on $\mathbb{R}_+$.
Given $\omega = \sum_i\delta_{(w^*_i,u_i)} \in \Omega$, we define \textit{the interlacement} and
the \textit{vacant set} at level $u$ respectively as the random subsets of $\mathbb{Z}^d$:
\begin{gather}
\label{e:Iu}
\mathcal{I}^u (\omega) = \bigg\{ \bigcup_{i; u_i \leq u} \textnormal{Range}(w^*_i) \bigg\} \mbox{ and}\\
\label{e:Vu}
\mathcal{V}^u(\omega) = \mathbb{Z}^d \setminus \mathcal{I}^u(\omega).
\end{gather}
In~\cite{Szn09} (0.13), Sznitman introduced the critical value
\begin{equation}
\label{e:u*}
u_* = \inf \big\{u \geq 0; \mathbb{P}[\mathcal{V}^u
\text{ contains an infinite connected component}] = 0\big\},
\end{equation}
where the vacant set undergoes a phase transition in connectivity.
It is known that $0 < u_* < \infty$ for all $d \geq 3$,
see~\cite{Szn09}, Theorem~3.5 and~\cite{SS09}, Theorem~3.4.
Moreover, it is also proved that if existent, the infinite connected component of the vacant set must be unique, see~\cite{Tei09b}, Theorem~1.1.



It is important to mention also that, as shown in~\cite{Szn09},
\begin{equation}
 \label{e:ergodic}
 \begin{array}{c}
 \text{the law of the random set $\mathcal{I}^u$ is invariant and ergodic with respect to}\\
 \text{translations of the lattice $\Z^d$.}
 \end{array}
\end{equation}

\subsection{Decoupling: the main result}
\label{s_result}

We now state our main result on random interlacements. It provides us with a way to decouple the intersection of the interlacement set $\mathcal{I}^u$ with two disjoint subsets $A_1$ and $A_2$ of~$\Z^d$. Namely, we couple the original interlacement process~$\mathcal{I}^u$ with two \emph{independent} interlacements processes~$\mathcal{I}^u_1$ and $\mathcal{I}^u_2$ in such a way
that~$\mathcal{I}^u$ restricted on~$A_k$ is ``close'' to~$\mathcal{I}^u_k$, for $k = 1,2$, with probability rapidly going to~$1$ as the distance between the sets increases. This is formulated precisely in
\begin{theorem}
\label{t_main}
Let $A_1,A_2$ be two nonintersecting subsets of~$\Z^d$, with at least one of them being finite. Abbreviate $s=\dist(A_1,A_2)$ and $r= \min\{ \diam(A_1), \diam(A_2)\}$. Then, there are positive constants $\useiconst{c:main'}$ and $\useiconst{c:main''}$ (depending only on the dimension $d$) such that for all $u > 0$ and $\eps\in(0,1)$ there exists a coupling $\mathbb{Q}$ between $\mathcal{I}^u$ and two independent random interlacements processes, $(\mathcal{I}^u_1)_{u \geq 0}$ and $(\mathcal{I}^u_2)_{u \geq 0}$ such that
\begin{equation}
\label{eq_mainmain}
 \mathbb{Q} \big[ \mathcal{I}^{u(1-\eps)}_k \cap A_k \; \subseteq \; \mathcal{I}^u \cap A_k \; \subseteq \; \mathcal{I}^{u(1+\eps)}_k,
\text{ $k = 1, 2$} \big] \geq 1 - \useiconst{c:main'} (r+s)^d \exp(-\useiconst{c:main''}\eps^2us^{d-2}).
\end{equation}
\end{theorem}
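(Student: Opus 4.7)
The plan is to build the coupling $\mathbb{Q}$ excursion by excursion. The intuition is that within the interlacement at level $u$, the restrictions of $\mathcal{I}^u$ to $A_1$ and to $A_2$ are assembled from excursions whose entry points into each set are approximately distributed as $\overline e_{A_k}$ (up to the influence of the previous excursion, which becomes negligible as that excursion travels a distance~$s$). If each excursion-entry chain could be matched with an i.i.d.\ sequence drawn from the corresponding equilibrium measure, then the traces on $A_1$ and on $A_2$ would be driven by two independent Poisson systems of excursions, hence would have the law of two independent interlacements restricted to $A_1$ and $A_2$. The sprinkling in $\eps$ absorbs the small mismatch in the number of excursions that appears because the coupling is only approximate.

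First I would apply the smoothening procedure of Proposition~\ref{p:fatA} to replace $A_1,A_2$ by enlarged sets $\tilde A_k \subset \bally(A_k, s/3)$ whose entrance measures satisfy the regularity estimate \eqref{eq:pre_smooth}. This enlargement is affordable since the new sets are still separated by a distance of order $s$ (and contain the original sets, so a coupling for $\tilde A_1,\tilde A_2$ implies one for $A_1,A_2$). Combined with the standard Green's function estimates for simple random walk on $\Z^d$, this regularity guarantees that for a random walk started at a point of $\tilde A_j$ and conditioned to hit $\tilde A_k$ before escaping to infinity, the entry distribution on $\tilde A_k$ is pointwise comparable to $\overline e_{\tilde A_k}$, uniformly in the starting point, provided $k\neq j$ or the starting point lies far from the landing region.

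Next, I would invoke the alternative construction developed in Section~\ref{s:altern_constr}: the restriction of $\mathcal{I}^u$ to $\tilde A_1 \cup \tilde A_2$ is built from a Poisson($u\,\capacity(\tilde A_1 \cup \tilde A_2)$) number of doubly infinite trajectories, each split into successive excursions in and out of $\tilde A_1$ and $\tilde A_2$, and the entry points of these excursions form a Markov chain $(Z_i)$ on $\partial \tilde A_1 \cup \partial \tilde A_2$ with transitions given by the entrance distributions just described. Applying Proposition~\ref{p:domainMarkov} (and its refinements Theorems~\ref{t:expectsingle}--\ref{t:expmoment}) to this chain, I would couple $(Z_i)$ with an i.i.d.\ sequence $(W_i)$ distributed as $\overline e_{\tilde A_1 \cup \tilde A_2}$, and then separate the $W_i$'s into those landing in $\tilde A_1$ and those landing in $\tilde A_2$. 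Completing each $W_i$ by an independent simple random walk run to infinity (using the Markov property) yields two independent Poissonian excursion soups, which in turn produce two independent interlacements $\mathcal{I}^u_1$ and $\mathcal{I}^u_2$ at level $(1\pm\eps)u$ on $A_k$, as in the bilateral inclusion of \eqref{eq_mainmain}.

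The main obstacle is the large-deviation bound controlling the event in the right-hand side of \eqref{e:domainMarkov}. One must show that the soft local time $G_n(z) = \sum_{j\le n}\xi_j\, p(Z_j,z)$ satisfies $(1-\eps)n\,\overline e(z) \le G_n(z) \le (1+\eps)n\,\overline e(z)$ for every $z$ on $\partial \tilde A_1 \cup \partial \tilde A_2$, with probability at least $1-\useiconst{c:main'}(r+s)^d\exp(-\useiconst{c:main''}\eps^2 u s^{d-2})$, where $n$ is proportional to $u\,\capacity(\tilde A_1\cup\tilde A_2)$. Centering and using the exponential Chebyshev inequality together with the moment estimates of Section~\ref{s:simul}, the fluctuations of $G_n(z)-n\,\overline e(z)$ are Gaussian on scale $\sqrt{n\,\overline e(z)^2}$; since $\overline e(z) \gtrsim 1/\capacity(\tilde A_k) \gtrsim s^{-(d-2)}$ on each boundary piece (using \eqref{e:estimatecap} and the fact that $\tilde A_k \subset \bally(A_k,s/3)$), a union bound over the $O((r+s)^d)$ sites of $\partial\tilde A_1\cup\partial\tilde A_2$ produces exactly the error term on the right-hand side of \eqref{eq_mainmain}. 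Finally, one verifies that on this good event the inclusions of the theorem hold, by comparing the actual excursion counts with the target Poisson($(1\pm\eps) u\,\capacity(\tilde A_k)$) numbers and invoking monotonicity to transfer the result from $\tilde A_k$ back to $A_k$.
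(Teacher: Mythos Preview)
Your high-level strategy --- smoothen the sets, decompose the interlacement into excursions, control the soft local time by a large-deviation estimate, and conclude with a union bound over boundary sites --- is exactly the architecture of the paper's proof. But two of the load-bearing steps in your sketch are not correct as stated, and they correspond precisely to the places where the paper does something more delicate.

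\textbf{The coupling target.} You propose to dominate the entrance-point chain $(Z_i)$ by an i.i.d.\ sequence with law $\overline e_{\tilde A_1\cup\tilde A_2}$, then split by which set the point lands in and declare the two halves to be independent interlacements on $\tilde A_1$ and $\tilde A_2$. This last identification fails: $\overline e_{\tilde A_1\cup\tilde A_2}$ restricted to $\partial\tilde A_k$ is \emph{not} $\overline e_{\tilde A_k}$ (the escape probabilities differ because of the presence of the other set), so the split process is not the restriction of an interlacement on $\tilde A_k$. The paper avoids this problem altogether: it introduces a separator set $V$ at distance of order~$s$ from both smoothened sets, so that $\Sigma_{A_1\cup A_2}=\Sigma_{A_1}\sqcup\Sigma_{A_2}$, and then uses the \emph{same} Poisson point process on $\Sigma_{A_1\cup A_2}\times\R_+$ to build all three processes $\mathcal J^u_{A_1\cup A_2}$, $\mathcal J^v_{A_1}$, $\mathcal J^v_{A_2}$ via soft local times (Proposition~\ref{p:couple}). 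The independence of $\mathcal I^u_1$ and $\mathcal I^u_2$ comes for free from the disjointness of $\Sigma_{A_1}$ and $\Sigma_{A_2}$; no i.i.d.\ reference sequence is needed. The inclusion in~\eqref{eq_mainmain} then reduces to the inequality $G^{A_k}_{u(1-\eps)}\le G^{A_1\cup A_2}_u\le G^{A_k}_{u(1+\eps)}$, and the crucial fact that makes this tractable is that the \emph{expected} soft local times $E G^{A_k}_v(z)$ and $E G^{A_1\cup A_2}_v(z)$ agree for $z\in\Sigma_{A_k}$ (both equal $v\,\capacity(V)\,E\rho_1(X_0(z))$), so one only has to control fluctuations around a common mean.

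\textbf{The numerics of the large deviation.} Your claim that $\overline e(z)\gtrsim 1/\capacity(\tilde A_k)\gtrsim s^{-(d-2)}$ is false on both counts when $r\gg s$: for a smoothened set of diameter $\sim r$ one has $\overline e(z)\asymp r^{-(d-1)}$ and $\capacity\asymp r^{d-2}$, so neither inequality holds. The correct scales, proved in Lemma~\ref{l_expect_Y}, are that the one-particle contribution $F_1^C(x)$ to the soft local time at $x\in\partial C$ has mean $\pi^C(x)\asymp s^{-1}\capacity(V)^{-1}$ and second moment $\lesssim s^{-d}\capacity(V)^{-1}$; note the dependence on~$s$, not on~$r$. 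Summing a Poisson($u\,\capacity(V)$) number of these, the sum has mean $\asymp u s^{-1}$ and variance $\lesssim u s^{-d}$, so a deviation of size $\eps u s^{-1}$ has Gaussian cost $\exp(-c\,\eps^2 u s^{d-2})$, which after the exponential-Chebyshev argument and the union bound over $|\partial C|=O((r+s)^d)$ sites gives exactly the error term in~\eqref{eq_mainmain}. Getting these scales requires both the separator $V$ (so that every transition density is $O(s^{-(d-1)})$ by~\eqref{hip2}) and the tail bound of Lemma~\ref{l_LD_Y}, neither of which is visible in your outline.
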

It is straightforward to see that
the above theorem implies the
inequality on the covariance of increasing (or decreasing) functions depending
only on~$A_1$ and~$A_2$ stated previously in Theorem~\ref{t:coroll}.
%
Also, we mention that the factor $(r+s)^d$ before the exponential
in~\eqref{eq_mainmain} can usually be reduced,
see Remark~\ref{rem_boundary_size}.

\section{Discussion, open problems, and an application of the decoupling}
\label{s_appl}
 We start this section with the following application of our main result.
We are interested in
the probability $\mathbb{P}[0 \xleftrightarrow{\mathcal{V}^u} x]$ that two far away points are connected through the vacant set. In the sub-critical case, $u > u_*$, this probability clearly converges to zero as~$\|x\|$ goes to infinity. In what follows, we will be interested in the rate in which this convergence takes place.

In Proposition~3.1 of \cite{Szn09}, it was proven that $\mathbb{P}[0 \xleftrightarrow{\mathcal{V}^u} x]$ decays at least as a polynomial in~$\|x\|$
 if $u$ is chosen large enough.
Then~in \cite{SS10} this was considerably improved,
by showing that for~$u$ large enough,
there exist $c, c'$ and $\delta > 0$ (possibly depending on~$u$), such that
\begin{equation}
 \label{e:connect}
 \mathbb{P}[0 \xleftrightarrow{\mathcal{V}^u} x] \leq c \exp\{-c' \|x\|^\delta\}, \text{ for every $x \in \Z^d$.}
\end{equation}

To be more precise, the above statement was established for all intensities $u$ above the threshold
\begin{equation}
 \label{e:ustarstar}
 u_{**}(d) = \inf \Big\{ u > 0; \text{ for some $\alpha > 0$, $\lim_{L \to \infty} L^\alpha \mathbb{P} \big[ [-L,L]^d \xleftrightarrow{\mathcal{V}^u} \partial [-2L,2L]^d\big] = 0$} \Big\}.
\end{equation}
The above critical value is known to satisfy $u_* \leq u_{**} < \infty$ (see \cite{S09}, Lemma~1.4) and a very relevant question is whether $u_*$ and $u_{**}$ actually coincide.

In~\cite{Szn12}, an important class of decoupling inequalities was introduced, implying in particular that \eqref{e:ustarstar} can be written as
\begin{equation}
 \label{e:ustarstar2}
 u_{**} = \inf \Big\{ u > 0; \text{ $\lim_{L \to \infty} \mathbb{P} \big[ [-L,L]^d \xleftrightarrow{\mathcal{V}^u} \partial [-2L,2L]^d\big] = 0$} \Big\},
\end{equation}
potentially enhancing the validity of~\eqref{e:connect}.
The above result could perhaps be seen as a step in the direction of proving $u_* = u_{**}$.

Here, we further weaken the definition of $u_{**}$ but, more importantly, we improve on the bound \eqref{e:connect} for values of $u$ above $u_{**}$. The improved result we present gives the correct exponents in the decay of the connectivity function, although for $d = 3$ they could be off by logarithmic corrections, see Remark~\ref{r:decay} below.

\newiconst{c:connect} \newiconst{c:connect2}
\newiconst{c:connect_dim3} \newiconst{c:connect2_dim3}
\begin{theorem}
 \label{t:connect}
 For $d \geq 4$, given $u > u_{**}(d)$, there exist positive constants  $\useiconst{c:connect} = \useiconst{c:connect}(d,u)$ and $\useiconst{c:connect2} = \useiconst{c:connect2}(d,u)$ such that
 \begin{equation}
\label{e:connect_d4}
  \mathbb{P}[0 \xleftrightarrow{\mathcal{V}^u} x] \leq
\useiconst{c:connect} \exp \{ -\useiconst{c:connect2} \|x\|\},
\text{ for every $x \in \Z^d$.}
 \end{equation}
 If $d = 3$ and $u > u_{**}(3)$, then for any $b>1$ there exist
$\useiconst{c:connect_dim3} = \useiconst{c:connect_dim3}(u,b)$ and
$\useiconst{c:connect2_dim3} = \useiconst{c:connect2_dim3}(u,b)$
such that
 \begin{equation}
\label{e:connect_d3}
  \mathbb{P}[0 \xleftrightarrow{\mathcal{V}^u} x] \leq
\useiconst{c:connect_dim3} \exp \Big\{ -\useiconst{c:connect2_dim3}
\frac{\|x\|}{\log^{3b}\|x\|} \Big\}, \text{ for every $x \in \Z^d$.}
 \end{equation}

 Moreover, we show that \eqref{e:ustarstar} can be written as
 \begin{equation}
 \label{e:ustarstar3}
  u_{**} = \inf \Big\{ u > 0; \liminf_{L \to \infty}
\mathbb{P} \big[ [0,L]^d \xleftrightarrow{\mathcal{V}^u}
\partial [-L,2L]^d \big] < \frac{7}{2d\cdot 21^d} \Big\}.
 \end{equation}
\end{theorem}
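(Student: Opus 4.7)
The plan is to prove Theorem~\ref{t:connect} via a scale-by-scale cascading argument built upon the decoupling estimate of Theorem~\ref{t_main}. The connectivity bounds~\eqref{e:connect_d4} and~\eqref{e:connect_d3} will be deduced from a strong decay of the probability of a vacant crossing between a box and a surrounding annulus, proved by induction on scale, while the characterization~\eqref{e:ustarstar3} will come from showing that a single dip of such a crossing probability below the explicit threshold $7/(2d\cdot 21^d)$ already seeds the cascade.

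Concretely, fix $u > u_{**}$, a base scale $L_0$, and set $L_n = 2^n L_0$. Let $p_n(u)$ be the probability that $[-L_n, L_n]^d$ is connected to $\partial[-2L_n, 2L_n]^d$ inside $\mathcal{V}^u$. Any such vacant crossing must contain two disjoint box-to-annulus crossings at scale $L_{n-1}$, whose supports can be taken at mutual distance of order $L_n$. A union bound over the $O\big((L_n/L_{n-1})^{2d}\big)$ choices of such a pair, followed by Theorem~\ref{t_main} applied to decouple the interlacement configurations on the two supports with sprinkling parameter $\eps_n$ and separation $s \asymp L_n$, yields a recursion
\begin{equation*}
p_n(u) \;\leq\; C\,\big(L_n/L_{n-1}\big)^{2d}\, p_{n-1}\!\big(u(1+\eps_n)\big)^{2} \;+\; C\, L_n^d\, \exp\!\big(-c\,\eps_n^{2}\,u\,L_n^{d-2}\big),
\end{equation*}
for constants $C,c$ depending only on $d$. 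A summable $(\eps_n)$ keeps the accumulated sprinkling below $\eps$, so every $p_n$ is taken at an intensity below $u(1+\eps)$. The characterization~\eqref{e:ustarstar3} then follows by covering the symmetric event in~\eqref{e:ustarstar2} by $O(d\cdot 21^d)$ translates of the asymmetric event in~\eqref{e:ustarstar3}; the constant $7/(2d\cdot 21^d)$ is tuned so that a single subsequential dip below it drives the quadratic recursion below $1/2$ at some large scale $n_0$, after which the cascade takes over and forces $p_n \to 0$ at least polynomially in $L_n$, matching the original definition~\eqref{e:ustarstar} of $u_{**}$.

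For $d \geq 4$, the exponent $\eps_n^{2}\,u\,L_n^{d-2}$ grows super-linearly in $L_n$ even when $\eps_n$ decays polynomially; taking $\eps_n = \eps_0\, n^{-2}$ makes the error negligible compared to $p_{n-1}^{2}$, and iteration yields a doubly exponential decay $p_n \leq 2^{-2^{\,n-n_0}}$. A vacant path from $0$ to $x$ with $\|x\| = R$ must in particular cross each of the $\Theta(\log R)$ disjoint dyadic annuli centered at $0$; applying Theorem~\ref{t_main} one more time to make these crossings approximately independent across well-separated scales, the connection probability is bounded by the product of the corresponding $p_n$'s, which telescopes to the clean exponential bound $\exp(-c\|x\|)$ of~\eqref{e:connect_d4}.

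The main obstacle is the three-dimensional case, where the decoupling error is only $\exp(-c\,\eps_n^{2}\,u\,L_n)$, linear rather than super-linear in $L_n$. Summability of $\eps_n$ together with the need to overcome the $L_n^d$ prefactor in the error term no longer tolerates a polynomial $\eps_n$; instead one is forced into a schedule of the form $\eps_n \sim (\log L_n)^{-b}$ with $b>1$, so that the total sprinkling $\sum_n \eps_n$ is finite but each individual $\eps_n$ is barely summable. Propagating this weaker sprinkling through the recursion, and then multiplying the surviving scale-by-scale factors up to scale $R$, one obtains the bound~\eqref{e:connect_d3}; the exponent $3b$ arises from combining the quadratic cost $\eps_n^{-2}$ in the decoupling error with the additional logarithmic factors absorbed when converting box-to-annulus crossing bounds into point-to-point connectivity estimates, and the strict inequality $b>1$ is inherited precisely from the summability of the sprinkling sequence.
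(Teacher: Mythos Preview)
Your overall strategy --- a multi-scale renormalization using the decoupling inequality of Theorem~\ref{t_main} with a summable sprinkling sequence --- is exactly the one the paper follows. However, two of your steps contain genuine gaps that, as written, would prevent the argument from closing.

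\textbf{Separation of the two sub-events.} With purely geometric scales $L_n=2^nL_0$ and the annulus geometry you describe, the supports of the two scale-$(n-1)$ sub-events are \emph{not} at mutual distance of order~$L_n$; in fact the natural choice of inner and outer sub-boxes puts the two supports at distance~$O(1)$ (or zero), and then the decoupling error $\exp(-c\,\eps_n^2 u s^{d-2})$ is useless. The paper fixes this by using the slightly super-geometric recursion $L_{k+1}=2\big(1+(k+5)^{-b}\big)L_k$ together with the asymmetric boxes $C^k_x=[0,L_k)^d+x$, $D^k_x=[-L_k,2L_k)^d+x$; this creates separation $s\ge \lfloor L_{k+1}\rfloor-2L_k\asymp 2^kL_1/(k+5)^b$, and it is also what pins down the combinatorial constant $3^d\cdot 2d\cdot 7^{d-1}=2d\cdot 21^d/7$, hence the threshold $7/(2d\cdot 21^d)$ in~\eqref{e:ustarstar3}. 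Your sentence ``mutual distance of order~$L_n$'' hides the key construction.

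\textbf{From $p_n$ to the connectivity bound, and the origin of $3b$.} You do not need to multiply crossing probabilities over all dyadic annuli, nor to invoke the decoupling a second time: a single bound $p_k(u)\le\exp(-h_2 2^k)$ at the largest scale $k$ with $\tfrac32 L_k<\|x\|$ already gives~\eqref{e:connect_d4}, since $2^k\asymp L_k\asymp\|x\|$. The same applies in $d=3$, where the paper obtains $p_k(u)\le\exp(-h'_2(k+5)^{-3b}2^k)$ and reads off~\eqref{e:connect_d3} directly. Your explanation of the exponent~$3b$ is therefore incorrect: it does not come from any ``conversion'' step, but from the decoupling error, which for $d=3$ carries the exponent $\eps_k^2\, u\, s\asymp k^{-2b}\cdot 2^k/(k+5)^b\asymp 2^k/k^{3b}$ --- two powers of $b$ from $\eps_k\sim k^{-b}$ and one power from the separation $s\sim 2^k/k^b$ forced by the scale recursion above.
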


\begin{remark}
\label{r:decay}
The probability that a straight segment of length~$n$ is vacant is exponentially small in~$n$ when $d\geq 4$, while for $d = 3$, this probability is at least $c\exp(-c'\frac{n}{\log n})$, which corresponds to the capacity of a line segment (this follows e.g.\ from Proposition~2.4.5
of~\cite{L91}). So, \eqref{e:connect_d4} is sharp (up to constants), but the situation with~\eqref{e:connect_d3} is less clear, since in~\eqref{e:connect_d3} the power of the logarithm in the denominator is at least~$3$. We believe, however, that~\eqref{e:connect_d3} can be improved (by decreasing the power of the logarithm).
\end{remark}

\begin{remark}
\label{r:optimal}
There is a general question about how sharp is the result in~\eqref{eq_mainmain} (also in~\eqref{main_incr} and~\eqref{main_decr}). One could for instance question whether the probability in~\eqref{eq_mainmain} can be exactly~$1$, thus achieving the equality in~\eqref{main_incr}--\eqref{main_decr} (so that we would have a ``perfect domination''). Interestingly enough, Theorem~\ref{t:connect} sheds some light on this question, at least in dimension~$d=3$. Indeed, in the proof of Theorem~\ref{t:connect} we use~\eqref{main_decr} with $\eps\simeq \log^{-b}s$ to obtain the subexponential decay of~\eqref{e:connect_d3}; however, if the error term could be dropped altogether,
or even if $s$ could be substituted by $s^{1+\delta}$ (for some $\delta>0$) in that term, then (compare with the proof for $d\geq 4$) one would obtain the exponential decay for $d=3$ as well, which contradicts the remarks of the previous paragraph. This is an indication that, in general, $s^{d-2}$ in the exponent in the error term could be sharp, at least if~$\eps$ is small enough. Also, we note that one cannot hope to achieve the perfect domination if $\eps\ll s^{-(d-2)}$ simply due to~\eqref{e:cov}.

It is less clear how small the parameter~$\eps$ can be made (say, in the situation when~$s$ does not exceed~$r$). Obviously, \eqref{eq_mainmain} stops working when~$\eps=O(s^{-\frac{d-2}{2}})$, but we are unsure about how much our main result can be improved in this direction. Also, it is
interesting to observe that, contrary to the bound~\eqref{e:basic},
our estimates become \emph{better} as the parameter~$u$ increases.
\end{remark}

\begin{remark}
\label{r:general_perc}
As mentioned in Section~\ref{s:decoupl_intro}, one can obtain the exponential decay
as in~\eqref{e:connect_d4} for any percolation model with suitable monotonicity
and decoupling properties. Namely, let $\mathcal{\tilde Q}^u$ be a family of measures
on $\{0,1\}^{\Z^d}$, $d\geq 2$, indexed by a parameter $u\in[0,\infty)$.
We assume that this family is monotone in the sense that $\mathcal{\tilde Q}^{u'}$
dominates $\mathcal{\tilde Q}^u$ if $u'<u$ (as happens for the vacant set in the
random interlacement model).
Also, assume that there are positive constants $b,c,M,\delta$ such that:
for any increasing events $A_1,A_2$ that depend on disjoint boxes of
size~$r$ within distance at least~$s$ from each other, we have for all $u>0$ and $\eps\in(0,1)$
\[
 \mathcal{\tilde Q}^u [A_1 A_2] \leq \mathcal{\tilde Q}^{(1-\eps)u} [A_1]
 \mathcal{\tilde Q}^{(1-\eps)u} [A_2]
    + c (r+s)^M \exp(-\useiconst{c:main''}\eps^{b} u s^{1+\delta}).
\]
Then for all $u>u^{**}$ (where $u^{**}$ is defined as in~\eqref{e:ustarstar3} with
obvious notational changes) we would obtain the exponential decay as in~\eqref{e:connect_d4}
(again, with obvious notational changes). The proof would go through practically unaltered.
\end{remark}

\section{Soft local times and simulations with Poisson processes}
\label{s:simul}
In this section we prove a result about simulating sequences of
random variables
using Poisson processes.
Besides being interesting on itself, this result will
be a major ingredient in order to couple various random
interlacements during the proof of Theorem~\ref{t_main}.

Let $\Sigma$ be a locally compact and Polish metric space. Suppose also that we are given a measure space $(\Sigma, \mathcal{B}, \mu)$ where $\mathcal{B}$ is the Borel $\sigma$-algebra on $\Sigma$ and $\mu$ is a Radon measure, i.e., every compact set has finite $\mu$-measure.

The above setup is standard for the construction of a Poisson point process on $\Sigma$. For this, we also consider the space of Radon point measures on $\Sigma \times \mathbb{R}_+$
\begin{equation}
 \label{e:M1}
 \M = \vvviiiggg\{\m = \sum_{\lambda \in \Lambda} \delta_{(z_\lambda,
 v_\lambda)}; z_\lambda \in \Sigma, v_\lambda \in \mathbb{R}_+
\text{ and } \m(K) < \infty \text{ for all compact $K$} \vvviiiggg\},
\end{equation}
endowed with $\sigma$-algebra $\mathcal{D}$ generated by the evaluation maps $\m \mapsto \m(S)$, $S \in \mathcal{B} \otimes \mathcal{B}(\R)$.

Note that the index set $\Lambda$ in the above sum has to be countable. However, we do not use $\Z_+$ for this indexing, because $(z_\lambda, v_\lambda)$ will be ordered later and only then we will endow them with an ordered indexing set.

One can now canonically construct a Poisson point process $\m$ on the space $(\M, \mathcal{D}, \mathbb{Q})$ with intensity given by $\mu \otimes \d v$, where $\d v$ is the Lebesgue measure on $\mathbb{R}_+$. For more details on this construction, see for instance~\cite{R08}, Proposition~3.6 on p.130.


The proposition below provides us with a way to simulate a random element of $\Sigma$ using the Poisson point process $\m$. Although this result is very simple and intuitive, we provide here its proof for the sake of completeness and the reader's convenience.

\begin{proposition}
\label{p:simulate}
Let $g:\Sigma \to \mathbb{R}_+$ be a measurable function with $\int g(z) \mu(\d z) = 1$. For $\m = \sum_{\lambda \in \Lambda} \delta_{(z_\lambda, v_\lambda)} \in \M$, we define
\begin{equation}
 \xi = \inf \{ t \geq 0; \text{ there exists $\lambda \in \Lambda$ such that $t g(z_\lambda) \geq v_\lambda$}\},
\end{equation}
see Figure~\ref{f:xi}. Then under the law $\mathbb{Q}$ of the Poisson point process~$\m$,
\begin{enumerate}[(i)]\addtolength{\itemsep}{2mm}\vspace{2mm}
 \item \label{e:io} there exists a.s.\
 a unique $\hat{\lambda} \in \Lambda$ such that
 $\xi g(z_{\hat{\lambda}}) = v_{\hat{\lambda}}$,
 \item \label{e:xiio} $(z_{\hat{\lambda}}, \xi)$ is distributed as $g(z) \mu(dz)
\otimes \Exp(1)$,
 \item \label{e:mprime} $\m' := \sum_{\lambda \neq \hat{\lambda}} \delta_{(z_\lambda,v_\lambda -
\xi g(z_\lambda))}$ has the same law as~$\m$ and is independent
of $(\xi, \hat{\lambda})$.
\end{enumerate}
\end{proposition}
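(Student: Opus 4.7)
The plan is to reduce the claim to the standard description of the lowest point in a Poisson process with a product intensity $\nu \otimes \d\tilde v$ where $\nu$ is a probability measure on $\Sigma$. The reduction is performed by straightening the graphs $\{(z,t g(z)) : z \in \Sigma\}$ into horizontal lines via the deterministic map $T : \{g>0\}\times\R_+ \to \{g>0\}\times\R_+$, $T(z,v)=(z,\,v/g(z))$. Points with $g(z_\lambda)=0$ can never satisfy $t g(z_\lambda)\geq v_\lambda>0$, so they do not enter the infimum defining $\xi$ and may be discarded from the start.

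By the Poisson mapping theorem, together with the Jacobian change $\d v = g(z)\,\d\tilde v$, the image $\tilde\eta := \sum_{\lambda} \delta_{(z_\lambda,\,v_\lambda/g(z_\lambda))}$ is a Poisson point process with intensity $g(z)\mu(\d z)\otimes \d\tilde v$. Since $\int g\,\d\mu = 1$, this intensity equals $\nu\otimes \d\tilde v$ with $\nu(\d z)=g(z)\mu(\d z)$ a probability measure. In these coordinates $\xi = \inf_\lambda v_\lambda/g(z_\lambda)$ is simply the minimum of the vertical coordinates of $\tilde\eta$, and $\hat\lambda$ is the index realizing this minimum.

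With this reformulation, \eqref{e:io} and \eqref{e:xiio} follow immediately from classical facts about Poisson processes: the projection of $\tilde\eta$ onto the vertical axis is a rate-$1$ Poisson process on $\R_+$, hence its points are a.s.\ distinct and its minimum is $\mathrm{Exp}(1)$-distributed; conditionally on the vertical coordinates, the horizontal marks are i.i.d.\ with law $\nu$, yielding the stated joint law of $(z_{\hat\lambda},\xi)$ and the a.s.\ uniqueness of $\hat\lambda$.

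For \eqref{e:mprime} I would invoke the usual restart property of Poisson processes: after deleting the lowest point from $\tilde\eta$ and translating the remaining vertical coordinates downwards by $\xi$, we obtain $\tilde\eta' := \sum_{\lambda\neq\hat\lambda} \delta_{(z_\lambda,\,v_\lambda/g(z_\lambda)-\xi)}$, which is again Poisson with intensity $\nu\otimes \d\tilde v$ and is independent of $(z_{\hat\lambda},\xi)$. Pushing $\tilde\eta'$ forward by $T^{-1}(z,\tilde v) = (z,g(z)\tilde v)$ converts it into $\sum_{\lambda\neq\hat\lambda}\delta_{(z_\lambda,\,v_\lambda - \xi g(z_\lambda))} = \eta'$, whose law coincides with that of $\eta$ by a second application of the mapping theorem, and the independence from $(z_{\hat\lambda},\xi)$ is preserved under the deterministic, measurable bijection $T$. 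I do not anticipate a genuine obstacle; the only slightly delicate points are the measurability of $\hat\lambda$ as a function of $\eta$ and the treatment of the negligible set $\{g=0\}$, both of which are routine.
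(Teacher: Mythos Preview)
Your proof is correct and takes a genuinely different route from the paper's. The paper argues directly in the original coordinates: it introduces, for each measurable $A\subset\Sigma$, the auxiliary variable $\xi^A=\inf\{t:\exists\lambda,\ t\,\mathbbm{1}_A g(z_\lambda)\ge v_\lambda\}$, observes that $\xi^A$ is exponential with rate $\int_A g\,\d\mu$ and that $\xi^A,\xi^B$ are independent for disjoint $A,B$, and derives (i)--(ii) from these facts; for (iii) it invokes the Strong Markov property for Poisson point processes with respect to the stopping set $\{(z,v):v\le\xi g(z)\}$. Your argument instead makes a single change of variables $(z,v)\mapsto(z,v/g(z))$ that flattens all the curves $t\mapsto tg(\cdot)$ into horizontal lines, reducing everything to the minimum of the second coordinate in a Poisson process with intensity $\nu\otimes\d\tilde v$ for a \emph{probability} measure $\nu$; then (i)--(iii) are the standard marking and restart properties of a one-dimensional Poisson process. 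Your approach is arguably cleaner and more unified, and it makes the role of the hypothesis $\int g\,\d\mu=1$ transparent. The paper's approach, on the other hand, avoids any change of coordinates and makes the connection to stopping-set Markov properties explicit, which some readers may find more in keeping with the later iterative construction. One small point worth spelling out in your write-up: the points of $\eta$ in $\{g=0\}\times\R_+$ are untouched by the map $v\mapsto v-\xi g(z)$ and form an independent Poisson process on that set, so they automatically recombine with your $T^{-1}$-pushforward of $\tilde\eta'$ to give the full law of $\eta$.
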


As we have mentioned in the introduction, a statement similar to the above proposition has already been established in the special case of $\Sigma = (0,1) \subset \mathbb{R}$, in Claim~1.5 of \cite{EJP309}.

\begin{proof}
Let us first define, for any measurable $A \subset \Sigma$, the random variable
\begin{equation}
 \xi^A = \inf \{ t \geq 0; \text{ there exists $\lambda \in \Lambda$ such that $t \mathbbm{1}_A g(z_\lambda) \geq v_\lambda$}\}.
\end{equation}
Elementary properties of Poisson point processes
(see for instance~(a) and~(b) in~\cite{R08}, p.~130) yield that
\begin{equation}
\label{e:xiA}
\begin{array}{c}
 \text{$\xi^A$ is exponentially distributed
(with parameter $\int_A g(z) \mu(dz)$) and}\\
 \text{if $A$ and $B$ are disjoint, $\xi^A$ and $\xi^B$ are independent.}
\end{array}
\end{equation}

Property~(\ref{e:io}) now follows from~\eqref{e:xiA}, using that~$\Sigma$ is separable and the fact that two independent exponential random variables are almost surely distinct. Observe also that
\begin{equation}
 \mathbb{Q}[\xi \geq \alpha, z_{\hat{\lambda}} \in A] = \mathbb{Q}[\xi^{\Sigma \setminus A} > \xi^A \geq \alpha].
\end{equation}
Thus, using~\eqref{e:xiA} we can prove property~(\ref{e:xiio}) using simple properties of the minimum of independent exponential random variables.

Finally, let us establish property (\ref{e:mprime}).
We first claim that, given $\xi$, $\m'' := \sum_{\lambda \neq \hat{\lambda}} \delta_{(z_\lambda, v_\lambda)}$ is a Poisson point process, which is independent of $z_{\hat{\lambda}}$ and, conditioned on $\xi$, has intensity measure $\mathbbm{1}_{\{v > \xi g(z)\}} \cdot \mu(\d z) \otimes \d v$.

This is a consequence of the Strong Markov property for Poisson point processes and the fact that $\{(z,v) \in \Sigma \times \mathbb{R}_+; v \leq \xi g(z)\}$ is a stopping set, see Theorem~4 of~\cite{Ros82}.

To finish the proof, we observe that, given~$\xi$, $\m'$ is a mapping of~$\m''$ (in the sense of Proposition~3.7 of~\cite{R08}, p.~134). This mapping pulls back the measure $\mathbbm{1}_{\{v > \xi g(z)\}} \cdot \mu(\d z) \otimes \d v$ to $\mu(\d z) \otimes \d v$.
Noting that the latter distribution does not involve~$\xi$, we conclude the proof of~(\ref{e:mprime}) and therefore of the lemma.
\end{proof}

\begin{figure}
\centering \includegraphics[width = 0.75 \textwidth]{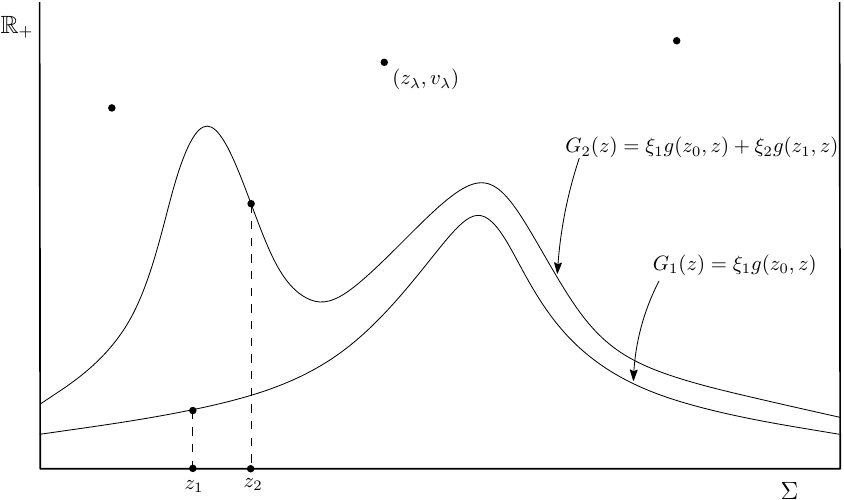}
  \caption{An example illustrating the definition of $\xi$ and $\hat{\lambda}$ in Proposition~\ref{p:simulate}. More generally, $\xi_1$, $z_1$ and $\xi_2$, $z_2$ as in \eqref{e:xis}}
  \label{f:xi}
\end{figure}

Let us now use the same Poisson point process~$\m$, to simulate not only a single random element of $\Sigma$, but a Markov chain $(Z_k)_{k \geq 1}$. For this, suppose that
in some probability space $(\M', \mathcal{D}', \mathcal{P})$
we are given a Markov chain $(Z_k)_{k \geq 1}$ on $\Sigma$ with transition densities
\begin{equation}
\label{e:transitone}
 \text{$\mathcal{P}[Z_{k+1} \in dz \mid Z_k] = g(Z_k, z) \mu(dz)$, for $k \geq 1$,}
\end{equation}
where $g(\cdot, \cdot)$ is $\mathcal{B}$-measurable in each of its coordinates and integrates
to one with respect to~$\mu$ on the second coordinate.

We moreover suppose that the starting distribution of the Markov chain is also absolutely continuous with respect to $\mu$. In fact, in order to simplify the notation, we suppose that
\begin{equation}
\label{e:startlaw}
 \text{$Z_1$ is distributed as $g(Z_0, z) \mu(\d z)$.}\\
\end{equation}
Observe that the Markov chain starts at time one, so that there is no element $Z_0$ in the chain. In fact, \eqref{e:startlaw} should be regarded as a notation for the distribution of $Z_1$, that is consistent with \eqref{e:transitone} for convenient indexing. This notation will be particularly useful in Theorem~\ref{t:2ndmoment} below.

\begin{remark}
 Observe that, in principle, $Z_k$ could be any process adapted to a filtration and the arguments of this section would still work, as long as their conditional distribution are absolutely continuous with respect to $\mu$.
 However, for simplicity we only deal with Markovian processes here, as the notations for general processes would be more complicated.
\end{remark}

Using
Proposition~\ref{p:simulate}, we introduce
\begin{equation}
\begin{aligned}
 & \xi_{1} := \inf \big\{ t \geq 0; \text{ there exists $\lambda \in \Lambda$ such that $t g(Z_0, z_\lambda) \geq v_\lambda$}\big\} \text{ and}\\
 & G_{1}(z) := \xi_{1} \; g(Z_0, z), \text{ for $z \in \Sigma$.}\\
 & (z_1, v_1) \text{ is the unique pair in $\{(z_\lambda, v_\lambda)\}_{\lambda \in \Lambda}$ with $\xi_1 G_1( z_{1}) = v_{1}$.}
\end{aligned}
\end{equation}
see Figure~\ref{f:xi}.

It is clear from Proposition~\ref{p:simulate} that $z_1$ is distributed as $Z_1$ and that the point process $\sum_{(z_\lambda,v_\lambda) \neq (z_1,v_1)} \delta_{(z_\lambda, v_\lambda - G_1(z_\lambda))}$ is distributed as $\eta$. In fact we can continue this construction starting with $\eta'$ to prove the following

\begin{proposition}
 \label{p:xiGi}
 We can proceed iteratively to define $\xi_n, G_n$ and $(z_n,v_n)$ as follows
 \begin{align}
  \label{e:xisingle}
  & \xi_{n} :=  \inf \big\{ t \geq 0; \text{ $\exists (z_\lambda, v_\lambda) \notin \{(z_k, v_k)\}_{k=1}^{n-1}$; $G_{n-1}(z_\lambda) + t g(z_{n-1}, z_\lambda) \geq v_\lambda$}\big\},\\[2mm]
  & G_{n}(z) = G_{n-1}(z) + \xi_{n} \; g(z_{n-1}, z),\\[1mm]
  & (z_n, v_n) \text{ is the unique pair $(z_\lambda, v_\lambda) \notin \{(z_k,v_k)\}_{k=1}^{n-1}$ with $G_n(z_\lambda) = v_{\lambda}$},\\
\label{e:zandZ}
  & (z_{1}, \dots, z_{n}) \overset d\sim (Z_1, \dots, Z_n) \text{ and they are independent from $\xi_1, \dots, \xi_n$,}\\
  & \sum_{\mathclap{(z_\lambda, v_\lambda) \notin \{(z_k,v_k)\}_{k=1}^{n}}} \quad \delta_{(z_\lambda, v_\lambda - G_{n}(z_\lambda))}
\text{ is distributed as $\eta$ and independent of the above.}
 \end{align}
 for all $n \geq 1$, see Figure~\ref{f:xi} for an illustration of
this iteration.
\end{proposition}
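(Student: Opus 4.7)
The plan is to prove the proposition by induction on $n$, the only ingredient being an iterated application of Proposition~\ref{p:simulate}. The guiding idea is that if, after the $(n-1)$-st step, the residual process \emph{shifted down by $G_{n-1}$} is again a Poisson process of intensity $\mu\otimes dv$ independent of the history, then one more application of Proposition~\ref{p:simulate}, now with density $g(z_{n-1},\cdot)$, selects the next pair $(z_n,v_n)$ and leaves behind yet another fresh Poisson process, shifted by $G_n$.

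For the base case $n=1$, one has $G_0\equiv 0$ and the list of selected points is empty, so \eqref{e:xisingle} is literally the definition of $\xi$ in Proposition~\ref{p:simulate} applied with $g$ replaced by $g(Z_0,\cdot)$. Its parts (\ref{e:io})--(\ref{e:mprime}) immediately yield: $(z_1,v_1)$ is a.s.\ well defined, $(z_1,\xi_1)$ has law $g(Z_0,z)\mu(dz)\otimes\Exp(1)$ (so in particular $z_1\overset{d}{=}Z_1$ and $z_1$ is independent of $\xi_1$), and the shifted residual $\tilde\eta_1:=\sum_{(z_\lambda,v_\lambda)\neq (z_1,v_1)}\delta_{(z_\lambda,v_\lambda-G_1(z_\lambda))}$ is distributed as $\eta$ and independent of $(z_1,\xi_1)$.

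For the inductive step, assume the claims hold up to $n-1$ with
\[
 \tilde\eta_{n-1}:=\sum_{(z_\lambda,v_\lambda)\notin\{(z_k,v_k)\}_{k=1}^{n-1}}\delta_{(z_\lambda,\,v_\lambda-G_{n-1}(z_\lambda))}
\]
distributed as $\eta$ and independent of $(z_1,\ldots,z_{n-1},\xi_1,\ldots,\xi_{n-1})$. Performing the substitution $v'_\lambda:=v_\lambda-G_{n-1}(z_\lambda)$ in \eqref{e:xisingle} shows that $\xi_n$ is exactly the variable $\xi$ of Proposition~\ref{p:simulate} applied to $\tilde\eta_{n-1}$ and density $g(z_{n-1},\cdot)$, and $(z_n,v'_n)$ is the corresponding selected point. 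Because $\tilde\eta_{n-1}$ is independent of $z_{n-1}$, we may condition on $z_{n-1}$ and invoke Proposition~\ref{p:simulate} to deduce: conditional on $z_{n-1}$, the pair $(z_n,\xi_n)$ has law $g(z_{n-1},z)\mu(dz)\otimes\Exp(1)$, matching the Markov transition \eqref{e:transitone}, and, using the telescoping $G_n=G_{n-1}+\xi_n g(z_{n-1},\cdot)$,
\[
 \sum_{(z_\lambda,v'_\lambda)\neq(z_n,v'_n)}\delta_{(z_\lambda,\,v'_\lambda-\xi_n g(z_{n-1},z_\lambda))}=\tilde\eta_n
\]
is distributed as $\eta$ and independent of $(z_n,\xi_n)$ (and, by iterated conditioning with the inductive hypothesis, of the whole history). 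This closes the induction and simultaneously yields \eqref{e:zandZ} by concatenating the successive transition kernels.

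The main obstacle is purely bookkeeping: one must carefully track, at each step, which variables $\tilde\eta_{n-1}$ is independent of, so that applying Proposition~\ref{p:simulate} conditionally on $z_{n-1}$ is legitimate; and one must verify the algebraic identification of the "fresh" Poisson input of iteration $n$ with $\tilde\eta_{n-1}$ via the telescoping of $G_n$, which is the only identity that actually powers the induction.
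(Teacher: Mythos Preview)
Your proof is correct and follows exactly the approach the paper intends: the paper does not spell out a proof of this proposition but simply remarks (just above the statement) that the case $n=1$ follows from Proposition~\ref{p:simulate} and that one can ``continue this construction starting with $\eta'$'', which is precisely your induction on~$n$ using the shifted residual process. Your write-up merely makes explicit the bookkeeping of independences and the telescoping identity $G_n = G_{n-1}+\xi_n g(z_{n-1},\cdot)$ that the paper leaves to the reader.
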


We call $G_n$ the \emph{soft local time} of the Markov chain, up to time $n$, with respect to the reference measure~$\mu$. We will justify the choice of this name in Theorem~\ref{t:expectsingle} below.

 From the above construction we have the following
\begin{corollary}
\label{c:coupleZ}
On the probability measure $\mathbb{Q}$ (where we defined the Poisson point process $\m$)  we can construct the Markov chain $(Z_k)_{k \geq 1}$, in such a way that for any measurable function $v: \Sigma \to \R_+$,
\begin{equation}
 \label{e:couplesingle}
 \mathbb{Q} \Big[ \{Z_1, \dots, Z_T\} \subseteq \{z_{\lambda}; v_{\lambda} \leq v(z_{\lambda})\} \Big] \geq \mathbb{Q} \big[ G_T(z) \leq v(z), \text{ for $\mu$-a.e. $z \in \Sigma$} \big],
\end{equation}
for any finite stopping time $T \geq 1$.
\end{corollary}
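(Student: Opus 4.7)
The plan is to derive this corollary as an essentially immediate consequence of Proposition~\ref{p:xiGi}. The central observation is that the iterative construction there already produces, on the Poisson space $(\M,\mathcal D,\mathbb{Q})$, a sequence $(z_1,z_2,\dots)$ that is distributed as the Markov chain $(Z_1,Z_2,\dots)$ by property~\eqref{e:zandZ}. So the first step is simply to \emph{define} the coupling by setting $Z_k:=z_k$; no further construction is required.

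The next step is to exploit two structural features of the soft local time. First, by the recursive definition $G_n(z)=G_{n-1}(z)+\xi_n\,g(z_{n-1},z)$ with $\xi_n\geq 0$ and $g\geq 0$, the function $G_n(z)$ is non-decreasing in $n$ for every fixed $z\in\Sigma$. Second, by construction each selected point satisfies $v_k=G_k(z_k)$. Combining these two facts, for every $k\leq T$,
\begin{equation*}
v_k \;=\; G_k(z_k) \;\leq\; G_T(z_k).
\end{equation*}
Thus, if we can show that $G_T(z_k)\leq v(z_k)$ for all $k\leq T$ on the event
$\mathcal A:=\{G_T(z)\leq v(z)\text{ for $\mu$-a.e.\ }z\}$,
we conclude $v_k\leq v(z_k)$, i.e., $z_k\in\{z_\lambda:v_\lambda\leq v(z_\lambda)\}$, which is exactly what we need.

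The only genuine (but mild) subtlety is that the bound $G_T(z)\leq v(z)$ holds only off a $\mu$-null set $N$ which is itself random. I would handle this by appealing to absolute continuity: conditional on the past $(\xi_1,\dots,\xi_T,z_0,\dots,z_{k-1})$, the law of $z_k$ has density $g(z_{k-1},\cdot)$ with respect to $\mu$ (by \eqref{e:transitone}--\eqref{e:startlaw} together with the independence assertion in \eqref{e:zandZ}), so $z_k\notin N$ almost surely on $\mathcal A$. Since $T<\infty$ a.s., applying this for each $k\leq T$ and using a union bound over finitely many indices gives $G_T(z_k)\leq v(z_k)$ for all $k\leq T$ on $\mathcal A$, up to a $\mathbb{Q}$-null event. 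Combining with the inequality $v_k\leq G_T(z_k)$ from the previous paragraph yields \eqref{e:couplesingle}.

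The main obstacle, such as it is, is this last measure-theoretic point: one must be careful because $G_T$ depends on the same randomness as the $z_k$'s, so the null set $N$ is not fixed. Absolute continuity of the conditional law of $z_k$ given the history that determines $G_T|_{\Sigma\setminus\{z_k\}}$ is what removes the issue. Everything else is bookkeeping on top of Proposition~\ref{p:xiGi}.
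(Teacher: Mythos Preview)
Your overall strategy is right and matches the paper's treatment (the paper simply states the corollary as an immediate consequence of Proposition~\ref{p:xiGi}, without elaborating on the null-set point). However, the way you handle the measure-theoretic subtlety has a gap, and the gap comes from an unnecessary detour.

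You bound $v_k = G_k(z_k) \leq G_T(z_k)$ and then try to show $G_T(z_k) \leq v(z_k)$. This is where the trouble starts: the exceptional $\mu$-null set $N = \{z : G_T(z) > v(z)\}$ is determined by $G_T$, which depends on \emph{all} of $z_0, \dots, z_{T-1}$ through the terms $g(z_{j-1}, \cdot)$. So for $k < T$, the set $N$ is \emph{not} measurable with respect to $(\xi_1, \dots, \xi_T, z_0, \dots, z_{k-1})$, and your conditioning argument does not go through: you cannot conclude that $z_k$ avoids $N$ a.s.\ from absolute continuity of its conditional law given the past, because $N$ also depends on the future (in particular on $z_k$ itself). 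Your parenthetical fix---conditioning on ``the history that determines $G_T|_{\Sigma \setminus \{z_k\}}$''---does not work either, since that history already contains $z_k$.

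The repair is to skip the detour: you only need $v_k = G_k(z_k) \leq v(z_k)$, so work with $N_k := \{z : G_k(z) > v(z)\}$ instead. This set \emph{is} $\sigma(\xi_1, \dots, \xi_k, z_0, \dots, z_{k-1})$-measurable, and now your absolute-continuity argument applies cleanly to give $\mathbb{Q}[z_k \in N_k,\ \mu(N_k)=0]=0$ for each fixed~$k$. On the event $\mathcal{A}$ and for $k\leq T$, monotonicity $G_k\leq G_T$ yields $N_k\subseteq N_T$, hence $\mu(N_k)=0$; therefore a.s.\ $z_k\notin N_k$, i.e., $v_k\leq v(z_k)$, and the desired inclusion follows.
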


\begin{remark}
  \label{r:bumps}
  Let us now comment on how the above corollary compares with other
  techniques for approximate domination present in the literature.
  One such method is called ``Poissonization'' and is present in
  various works, see for instance \cite{Szn09}, \cite{S09} and
  \cite{TW10}.  Loosely speaking, the method of Poissonization
  attempts to compare the elements $Z_1$, $Z_2, \dots$ with $z_1$,
  $z_2, \dots$ one by one, so that one needs the transition densities
  $g(z,z')$ be close to one (in $L^1(\mu)$) uniformly over $z$.  Not
  having such requirement is the main contribution of our technique,
  which will be useful later when working with random interlacements.
\end{remark}

In order to estimate the right-hand side of~\eqref{e:couplesingle}, it is natural to resort to concentration inequalities or large deviations principles for the sum defining~$G_T$. For this it is first necessary to obtain the expectation of the soft local time~$G_T(z)$. The following proposition relates this with the expectation of the usual local time of the chain~$Z_k$ and that is the main reason why we call~$G_k$ a soft local time.

We define the local time measure of the chain $(Z_k)_{k \geq 1}$ up to time~$n$ by
\begin{equation}
 \mathcal{L}_n= \sum_{k \leq n} \delta_{Z_k}.
\end{equation}
Observe that in some examples, the probability that $z \in \Sigma$ is visited by the Markov chain could be zero for every $z \in \Sigma$ (for instance if $\mu$ is the Lebesgue measure). Therefore, we need to use a test function in order to define what we call the \emph{expected local time} of the chain. More precisely, we say that a measurable function $h: \Sigma \to \R_+$ is the \emph{expected local time density} of $(Z_k)_{k \leq n}$ with respect to $\mu$ if
\begin{equation}
 \label{e:expectlocal}
 E^{\mathcal{P}} \big( \mathcal{L}_n f \big) = \int_\Sigma f(z) h(z)\,
 \mu(\d z), \quad \text{ for every non-negative measurable $f$.}
\end{equation}
Here $n$ could also be replaced by a stopping time. An important special case occurs when $\Sigma$ is countable and $\mu$ is the counting measure. In this case, the expected local time density $h(z)$ is given simply by the expectation of the local time $\mathcal{L}_n$ at $z$:
\begin{equation}
 \label{e:Expectcount}
 E^{\mathcal{P}} \Big( \sum_{k=1}^n f(Z_k) \Big) = \sum_{k=1}^n \sum_z f(z)
\mathcal{P}[Z_k = z] = \sum_z f(z) E^{\mathcal{P}}\mathcal{L}_n (z).
\end{equation}

For what follows, we suppose that the state space $\Sigma$ contains a special element $\Delta$ which we refer to as the \emph{cemetery}. We assume that $\mu(\{\Delta\}) = 1$ and $g(\Delta,\cdot) = 1_{\{\Delta\}}(\cdot)$, or in other words, that the cemetery is an absorbing state. We write $T_{\Delta}$ for the hitting time of~$\Delta$ which is a \emph{killing time} for the chain in the sense of~\cite{FP99}, see~(2).
We will also assume that test functions~$f$ as in~\eqref{e:expectlocal}
are zero at the cemetery.

The next result relates the expected local time density with the expectation of the soft local time.
\begin{theorem}
 \label{t:expectsingle}
 Consider a state space $(\Sigma, \mathcal{B}, \mu)$ with a cemetery
state~$\Delta$ and a Markov chain $(Z_k)_{k \geq 1}$ satisfying~\eqref{e:startlaw} and~\eqref{e:transitone}. Then we have
 \begin{display}
  $E^{\mathbb{Q}}[G_{T_\Delta}(z)]$ is the expected local time density of $(Z_k)_{k \leq T_\Delta}$ as in~\eqref{e:expectlocal}.
 \end{display}
 The result is also true when $T_\Delta$ is replaced by a deterministic time.
\end{theorem}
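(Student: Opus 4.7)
The statement essentially asserts that the process $G_n(z)$ constructed in Proposition~\ref{p:xiGi} behaves on average like a smoothed version of the usual local time measure $\mathcal{L}_n$ of the chain. My plan is to compute $E^{\mathbb{Q}}[G_{T_\Delta}(z)]$ directly from the iterative definition, and then verify \eqref{e:expectlocal} by a Fubini-plus-tower-property argument.

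First, I would unroll the recursion in \eqref{e:xisingle} to write, for each $n\geq 1$,
\[
G_n(z)=\sum_{k=1}^{n}\xi_k\,g(z_{k-1},z),
\]
so that $G_{T_\Delta}(z)=\sum_{k\geq 1}\xi_k\,g(z_{k-1},z)\mathbf{1}_{\{k\leq T_\Delta\}}$. Property \eqref{e:zandZ} of Proposition~\ref{p:xiGi} tells us that $(z_k)_{k\geq 1}$ has the law of the Markov chain $(Z_k)_{k\geq 1}$ and is independent of the family $(\xi_k)_{k\geq 1}$; moreover the $\xi_k$ are i.i.d.~$\mathrm{Exp}(1)$. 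Since $T_\Delta$ and the indicator $\mathbf{1}_{\{k\leq T_\Delta\}}=\mathbf{1}_{\{z_1,\dots,z_{k-1}\neq\Delta\}}$ are functions of $(z_j)_{j<k}$, the factor $\xi_k$ is independent of everything else multiplying it, and Tonelli gives
\[
E^{\mathbb{Q}}\bigl[G_{T_\Delta}(z)\bigr]
=\sum_{k\geq 1} E^{\mathbb{Q}}[\xi_k]\,E^{\mathcal P}\bigl[g(Z_{k-1},z)\mathbf{1}_{\{k\leq T_\Delta\}}\bigr]
=E^{\mathcal P}\biggl[\sum_{k=1}^{T_\Delta} g(Z_{k-1},z)\biggr].
\]

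Next, I would test \eqref{e:expectlocal} against an arbitrary non-negative measurable $f:\Sigma\to\mathbb{R}_+$ with $f(\Delta)=0$. By Tonelli and the fact that $g(Z_{k-1},\cdot)$ is the conditional density of $Z_k$ given $Z_{k-1}$ (this uses \eqref{e:transitone} for $k\geq 2$ and the convention \eqref{e:startlaw} for $k=1$),
\[
\int_\Sigma f(z)\,E^{\mathcal P}\!\Bigl[\textstyle\sum_{k=1}^{T_\Delta}g(Z_{k-1},z)\Bigr]\mu(\d z)
=\sum_{k\geq 1} E^{\mathcal P}\!\Bigl[\mathbf{1}_{\{k\leq T_\Delta\}}\!\int_\Sigma f(z)g(Z_{k-1},z)\,\mu(\d z)\Bigr].
\]
The inner integral equals $E^{\mathcal P}[f(Z_k)\mid Z_{k-1}]$. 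Since $\{k\leq T_\Delta\}\in\sigma(Z_1,\dots,Z_{k-1})$, the tower property replaces it by $f(Z_k)\mathbf{1}_{\{k\leq T_\Delta\}}$. Because $f(\Delta)=0$, the indicator can be absorbed, and summing back yields
\[
\int_\Sigma f(z)\,E^{\mathbb{Q}}[G_{T_\Delta}(z)]\,\mu(\d z)
=E^{\mathcal P}\!\Bigl[\textstyle\sum_{k=1}^{T_\Delta} f(Z_k)\Bigr]
=E^{\mathcal P}(\mathcal{L}_{T_\Delta} f),
\]
which is exactly \eqref{e:expectlocal}. The deterministic-time version needs no modification: simply replace $T_\Delta$ by $n$ everywhere; since no killing assumption is used, $f(\Delta)=0$ is unnecessary.

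The argument has no real obstacle; the only delicate point is the careful bookkeeping of the independence structure granted by \eqref{e:zandZ}, together with Tonelli's theorem in the exchange of sums and integrals (everything is non-negative, so no integrability hypothesis is needed). The intuitive content of the theorem — that the i.i.d.\ mean-one weights $\xi_k$ average out and replace each ``visit'' of the chain by the corresponding transition density — is precisely what justifies the terminology \emph{soft local time}.
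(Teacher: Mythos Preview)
Your argument is correct and follows essentially the same route as the paper: express $G_n(z)=\sum_{k\le n}\xi_k g(z_{k-1},z)$, use the independence in \eqref{e:zandZ} together with $E\xi_k=1$, and then identify $\int f(z)g(Z_{k-1},z)\,\mu(\d z)$ with the conditional expectation $E^{\mathcal P}[f(Z_k)\mid Z_{k-1}]$ via Fubini/Tonelli. The only cosmetic difference is that the paper first establishes the deterministic-$n$ case and then passes to $T_\Delta$ by monotone convergence (using $f(\Delta)=0$), whereas you work directly with $T_\Delta$ through the indicators $\mathbf{1}_{\{k\le T_\Delta\}}$; both orderings are equivalent.
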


\begin{proof}
 Given some $n \geq 1$, let us calculate
 \begin{equation}
  \begin{array}{e}
   E^{\mathcal{P}} \Big( {\sum_{k=1}^n} f(Z_k) \Big) &
= & E^{\mathcal{P}} f(Z_1) + E^{\mathcal{P}} \Big( \smash{\sum_{k=2}^n} E^{\mathcal{P}}_{Z_{k-1}} f(Z_1) \Big)\\
   & = & E^{\mathcal{P}} \Big( \sum_{k=1}^n \int f(z) g(Z_{k-1}, z)
\mu(\d z) \Big)\\
   & \overset{\eqref{e:zandZ}}= & E^{\mathbb{Q}} 
\int f(z) G_n(z) \mu(\d z)
= \int f(z) E^{\mathbb{Q}}G_n(z)\, \mu (\d z),
  \end{array}
 \end{equation}
 proving the validity of the proposition for the deterministic time~$n$.
We now let~$n$ go to infinity and the result follows from the monotone convergence theorem and the fact that~$f$ is zero at~$\Delta$.
\end{proof}
Let us remark that the above proof can be adapted to any \emph{killing time};
on the other hand, one cannot put an arbitrary \emph{stopping time} on the place
of~$T_\Delta$ in Theorem~\ref{t:expectsingle}.

Before stating the next result, let us discuss a bit further our convention on the starting distribution of the Markov chain. According to \eqref{e:startlaw}, $Z_1$ is distributed as $g(Z_0, z) \mu(\d z)$, but this was seen as a mere notation for convenient indexing and $Z_0$ had no meaning whatsoever on that equation. However, it is clear that given any $z_0 \in \Sigma$, we could plug it in the first coordinate of $g(\cdot, \cdot)$ as in \eqref{e:transitone} to define the density of $Z_1$. Then the whole construction of $\xi_k$, $G_k$ and $(z_k,v_k)$ in Proposition~\ref{p:xiGi} would depend on the specific choice of $z_0$. In the next proposition, we write $\mathbb{Q}_{z_0}$ for the measure~$\mathbb{Q}$, where the construction of $\xi_k$, $G_k$ and $(z_k, v_k)$ (recall \eqref{e:xisingle}), is obtained starting from the density $g(z_0,z)$. We also denote by $E^{\mathbb{Q}}_{z_0}$ the corresponding expectation.

\begin{remark}
\label{rem4.6}
Let us also observe that restricting the distribution of~$Z_1$ to be $g(z_0, z) \mu( \d z)$ for some $z_0 \in \Sigma$ does not represent any additional loss of generality, as~$z_0$ could be an artificial state introduced in~$\Sigma$, from which~$g(z_0, z)$ is any desired density for~$Z_1$.
\end{remark}

The next two theorems are useful in estimating the second and exponential moments of the soft local times. This will be useful in the proofs of Lemma~\ref{l_expect_Y} and Theorem~\ref{t_main}.

Next, besides calculating the expectation of~$G_k$, it is useful to estimate its second moment.
\begin{theorem}
 \label{t:2ndmoment}
 For any $z, z_0 \in \Sigma$,
 \begin{equation}
   E^{\mathbb{Q}}_{z_0} \big(G_{T_\Delta}(z) \big)^2
  \leq 4 E^{\mathbb{Q}}_{z_0} \big(G_{T_\Delta}(z)\big)
\sup_{z'_0} E^{\mathbb{Q}}_{z'_0} G_{T_\Delta}(z) .
 \end{equation}
 The result is also true with $T_\Delta$ replaced by a deterministic time.
\end{theorem}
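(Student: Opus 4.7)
The plan is to first integrate out the exponential weights $(\xi_k)$ by conditioning on the chain, and then exploit the Markov property to bound the resulting second moment of the pathwise integral by a product of two first moments.

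Write $Y_k := g(Z_{k-1},z)$ and $S := \sum_{k=1}^{T_\Delta} Y_k$, so that $G_{T_\Delta}(z) = \sum_{k=1}^{T_\Delta} \xi_k Y_k$. Recall from Proposition~\ref{p:xiGi} that the $\xi_k$ are i.i.d.\ $\mathrm{Exp}(1)$ and independent of the chain $(Z_k)$, hence of $(Y_k,T_\Delta)$. Using $E[\xi_k^2]=2$ and $E[\xi_j\xi_k]=1$ for $j\neq k$, a direct expansion of $(\sum \xi_k Y_k)^2$ conditional on $(Z_k)$ collapses to
\[
 E^{\mathbb{Q}}_{z_0}\big[G_{T_\Delta}(z)^2 \mid (Z_k)\big]
 \;=\; \sum_{k=1}^{T_\Delta} Y_k^2 \;+\; \Big(\sum_{k=1}^{T_\Delta} Y_k\Big)^{\!2},
\]
so that $E^{\mathbb{Q}}_{z_0}[G_{T_\Delta}(z)^2] = E_{z_0}[\sum_{k} Y_k^2] + E_{z_0}[S^2]$.

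Introduce the abbreviations $f(z_0'):=E^{\mathbb{Q}}_{z_0'}G_{T_\Delta}(z)$ and $M:=\sup_{z_0'}f(z_0')$. By Theorem~\ref{t:expectsingle} we have $f(z_0')=E_{z_0'}[S]$, and conditioning on the first step gives the crucial pointwise inequality
\[
 f(z_0') \;\geq\; E_{z_0'}[Y_1] \;=\; g(z_0',z),
\]
so in particular $Y_k = g(Z_{k-1},z) \leq f(Z_{k-1}) \leq M$. This takes care of the diagonal piece:
\[
 E_{z_0}\Big[\sum_{k=1}^{T_\Delta} Y_k^2\Big] \;\leq\; M\,E_{z_0}[S] \;=\; M f(z_0).
\]

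The cross piece is handled by the strong Markov property at time $j$. Since $\Delta$ is absorbing, $T_\Delta$ restricted to times larger than $j$ coincides with the killing time of the shifted chain started at $Z_j$ (and $f(\Delta)=0$ makes the case $Z_j=\Delta$ vacuous), so on $\{j\leq T_\Delta\}$,
\[
 E_{z_0}\Big[\sum_{k=j+1}^{T_\Delta} Y_k \,\Big|\, Z_1,\dots,Z_j\Big] \;=\; f(Z_j) \;\leq\; M.
\]
Summing over $j$ gives
\[
 E_{z_0}\Big[\sum_{1\leq j<k\leq T_\Delta} Y_j Y_k\Big] \;\leq\; M\,E_{z_0}[S] \;=\; M f(z_0),
\]
and therefore $E_{z_0}[S^2] = E_{z_0}[\sum_k Y_k^2]+2E_{z_0}[\sum_{j<k}Y_j Y_k] \leq 3Mf(z_0)$. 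Adding the diagonal bound yields $E^{\mathbb{Q}}_{z_0}[G_{T_\Delta}(z)^2] \leq 4Mf(z_0)$, which is the stated inequality. The deterministic-time version is proved identically after replacing $T_\Delta$ by $n$ throughout, using monotonicity in time when taking the supremum of the partial analogue of $f$. The only real point requiring care is justifying the Markov step at $j$ with a random killing time, which is clean precisely because $T_\Delta$ hits an absorbing cemetery.
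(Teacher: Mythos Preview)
Your proof is correct and follows essentially the same route as the paper's: expand the square, bound the diagonal terms using the pointwise inequality $g(z',z)\leq \sup_{z_0'} E^{\mathbb{Q}}_{z_0'}G_{T_\Delta}(z)$, and control the cross terms via the Markov property at the earlier index. The only cosmetic differences are that you first condition on the chain to integrate out the $\xi_k$'s (yielding the clean identity $E[G^2\mid (Z_k)]=\sum_k Y_k^2 + S^2$), and that you argue directly for $T_\Delta$ rather than proving the deterministic-$n$ case first and passing to the limit; the paper does the latter and keeps the $\xi$'s explicit throughout, but the underlying computation is the same.
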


\begin{proof}
 Given $z \in \Sigma\setminus\Delta$ and $n \geq 1$, we can write
(recall that the expectation of $(\Exp(1))^2$ equals $2$)
 \begin{align*}
   E^{\mathbb{Q}}_{z_0}& \big( G_n(z) \big)^2
= E^{\mathbb{Q}}_{z_0} \Big( \sum_{k=1}^n \xi_k g(z_{k-1},z) \Big)^2 \\
   & = E^{\mathbb{Q}}_{z_0} \Big( \sum_{k=1}^n \xi_k^2 g^2(z_{k-1},z) \Big)
+ E^{\mathbb{Q}}_{z_0} \Big(2 \sum_{k < k' \leq n}
\xi_k \xi_{k'} g(z_{k-1}, z) g(z_{k'-1},z) \Big)\\
   & \leq \sum_{k=1}^n E\xi_k^2 \sup_{z'} g(z',z) E^{\mathbb{Q}}_{z_0}
 g(z_{k-1},z) + 2 \sum_{k=1}^{n-1} \sum_{k'=k+1}^{n}
 E^{\mathbb{Q}}_{z_0} \big(g(z_{k-1},z) g(z_{k'-1},z) \big) \\
   & \leq 2 \sup_{z'} g(z',z) E^{\mathbb{Q}}_{z_0} G_n(z) + 2
 \sum_{k=1}^{n-1} \sum_{k'=k+1}^{n} E^{\mathbb{Q}}_{z_0} \big( g(z_{k-1},z)
 E^{\mathbb{Q}}_{z_0} (g(z_{k'-1},z)\mid z_{k-1} ) \big)\\
   & \leq 2 \sup_{z'} E^{\mathbb{Q}}_{z'} G_n(z) E^{\mathbb{Q}}_{z_0}
 G_n(z) + 2 \sum_{k=1}^{n-1} E^{\mathbb{Q}}_{z_0}
\Big( g(z_{k-1},z) E^{\mathbb{Q}}_{z_{k-1}} \Big( \sum_{m=1}^{n-k}
 g(z_{m-1},z) \Big) \Big)\\
& \leq 2 \sup_{z'} E^{\mathbb{Q}}_{z'} G_n(z) E^{\mathbb{Q}}_{z_0}
 G_n(z) + 2 \sup_{z'}E^{\mathbb{Q}}_{z'} \Big( \sum_{m=1}^{n-k}
 g(z_{m-1},z) \Big)
E^{\mathbb{Q}}_{z_0} \Big(\sum_{k=1}^{n-1}
g(z_{k-1},z) \Big)\\
&\leq 4 E^{\mathbb{Q}}_{z_0} \big(G_n(z)\big)
\sup_{z'_0} E^{\mathbb{Q}}_{z'_0} G_n(z),
 \end{align*}
 proving the result for the deterministic time~$n$.
Then we simply let~$n$ go to infinity and use the
monotone convergence theorem.
\end{proof}

The next result provides an estimate on the exponential moments of~$G_{T_\Delta}$,
which is clearly an important ingredient in bounding the right hand side of \eqref{e:couplesingle}. The next theorem imposes some regularity condition on the transition densities $g(\cdot, \cdot)$ (which will be encoded in~$\ell$
and~$\alpha$ below) to help in obtaining such fast decaying bounds. Intuitively speaking, the regularity condition says that if there is a big
accumulation of densities~$g$ in some point~$\hat z$, then there should be
a big
accumulation of densities in a large set~$\Gamma$.

\begin{theorem}
 \label{t:expmoment}
 Given $\hat z \in \Sigma$ and measurable $\Gamma \subset \Sigma$, let
 \begin{equation}
  \begin{split}
  \alpha & = \inf\Big\{\frac{g(z,z')}{g(z,{\hat z})};
  z \in \Sigma, z' \in \Gamma, g(z,{\hat z})>0\Big\},\\
  N(\Gamma) & = \#\{k \leq T_\Delta; z_k \in \Gamma\} ,
\text{ and}\\
  \ell & \geq \; \smash{\sup_{z' \in \Sigma}} \; g(z',\hat z).
  \end{split}
 \end{equation}
 Then, for any $v \geq 2$,
 \begin{equation*}
   \begin{split}
     \mathbb{Q}[G_{T_\Delta} & ({\hat z}) \geq v \ell]\\
     & \leq \mathbb{Q}[G_{T_\Delta}({\hat z}) \geq \ell] \Big( \exp \big\{-\big(\tfrac v2 - 1 \big) \big\} + \sup_{z'}\mathbb{Q}_{z'} \big[ \eta( \Gamma \times [0, \tfrac{1}{2} v \ell\alpha] ) \leq N(\Gamma) \big] \Big),
   \end{split}
 \end{equation*}
(recall the definition of $\eta$ in \eqref{e:M1} and observe that $\eta( \Gamma \times [0, \tfrac{1}{2} v \ell\alpha] )$ is a random variable with distribution $\textnormal{Poisson} \big( \tfrac{1}{2}v \ell \alpha\mu(\Gamma) \big)$).
\end{theorem}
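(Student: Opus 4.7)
The plan is to introduce the first-passage time
\[
  \sigma := \inf\{k \geq 1 : G_k(\hat z) \geq \ell\},
\]
and split the event of interest according to whether the overshoot of $G$ at time $\sigma$ already pushes past $v\ell/2$. Since $\{G_{T_\Delta}(\hat z) \geq \ell\} = \{\sigma \leq T_\Delta\}$, this yields the inclusion
\[
  \{G_{T_\Delta}(\hat z) \geq v\ell\} \subseteq \{\sigma \leq T_\Delta\} \cap \bigl( \{G_\sigma(\hat z) \geq v\ell/2\} \cup \{G_{T_\Delta}(\hat z) - G_\sigma(\hat z) > v\ell/2\} \bigr),
\]
and I would show that each piece contributes one of the two summands on the right-hand side of the claim.

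For the overshoot piece I would invoke Proposition~\ref{p:xiGi}: the $\xi_k$ are i.i.d.\ Exp$(1)$ and independent of the chain $(z_k)$. Letting $\mathcal{F}_{n-1}$ denote the natural filtration of this joint process up to step $n-1$, on $\{G_{n-1}(\hat z) < \ell\}$ one has
\[
  \mathbb{Q}[\sigma = n \mid \mathcal{F}_{n-1}] = \exp\bigl\{-(\ell - G_{n-1}(\hat z))/g(z_{n-1},\hat z)\bigr\},
\]
whereas $\{\sigma = n,\, G_n(\hat z) \geq v\ell/2\}$, which under $v \geq 2$ coincides with $\{\xi_n g(z_{n-1},\hat z) \geq v\ell/2 - G_{n-1}(\hat z)\}$, has conditional probability $\exp\{-(v\ell/2 - G_{n-1}(\hat z))/g(z_{n-1},\hat z)\}$. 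By the memoryless property, the ratio of these two is $\exp\{-(v/2 - 1)\ell/g(z_{n-1},\hat z)\} \leq e^{-(v/2 - 1)}$, using $g(z_{n-1},\hat z) \leq \ell$. Summing over $n$ yields
\[
  \mathbb{Q}[\sigma \leq T_\Delta,\, G_\sigma(\hat z) \geq v\ell/2] \leq e^{-(v/2-1)}\, \mathbb{Q}[G_{T_\Delta}(\hat z) \geq \ell].
\]

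For the remaining piece I would apply the strong Markov property of the joint (chain, Poisson) construction at $\sigma$: conditionally on $\mathcal{F}_\sigma$ and on $\{\sigma < \infty\}$, the continuation $(z_{\sigma+j})_{j\geq 1}$ is an independent copy of the Markov chain started at $z_\sigma$, while the shifted residual point process $\eta^{(\sigma)} := \sum_{\lambda \notin \{(z_k,v_k)\}_{k \leq \sigma}} \delta_{(z_\lambda,\, v_\lambda - G_\sigma(z_\lambda))}$ is an independent Poisson point process with the same law as $\eta$; this is the stopping-time extension of Proposition~\ref{p:xiGi}, obtained in the standard way by monotone approximation with bounded stopping times. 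The definition of $\alpha$ then gives the pointwise bound $g(z,z') \geq \alpha\, g(z,\hat z)$ for every $z' \in \Gamma$ and every $z$ with $g(z,\hat z)>0$, whence $\tilde G_n(z') \geq \alpha\, \tilde G_n(\hat z)$ for all $n$ and all $z' \in \Gamma$, where $\tilde G_n(\cdot) := G_{\sigma + n}(\cdot) - G_\sigma(\cdot)$. Consequently, on $\{\tilde G_{T_\Delta - \sigma}(\hat z) > v\ell/2\}$, every point of $\eta^{(\sigma)}$ with first coordinate in $\Gamma$ and second coordinate in $[0,\, v\ell\alpha/2]$ lies beneath the graph of the restarted soft local time and must, by the characterisation of consumed points in Proposition~\ref{p:xiGi}, be consumed by the restarted chain before time $T_\Delta$. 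The number of such consumed points is therefore at most the number of visits of the restarted chain to $\Gamma$, and in particular at most $N(\Gamma)$. Identifying $(\eta^{(\sigma)}, (z_{\sigma+j})_j)$ with $(\eta, (z_j)_j)$ under $\mathbb{Q}_{z_\sigma}$ and taking the supremum over the possible values of $z_\sigma$ delivers the second summand, multiplied by $\mathbb{Q}[\sigma \leq T_\Delta]$.

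Adding the two bounds produces the claimed inequality. The step I expect to be most delicate is the overshoot computation: the hypothesis $v \geq 2$ is precisely what guarantees that on $\{G_{n-1}(\hat z) < \ell\}$ the constraint $G_n(\hat z) \geq v\ell/2$ is at least as strong as the crossing condition defining $\{\sigma = n\}$, so that the two conditional probabilities telescope via the memoryless property into the clean factor $e^{-(v/2-1)}$. The other point requiring some care, the strong Markov extension of Proposition~\ref{p:xiGi}, is essentially routine.
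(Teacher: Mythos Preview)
Your proposal is correct and follows essentially the same approach as the paper: the same stopping time $\sigma=T_\ell$, the same two-part decomposition into an overshoot piece handled via the memoryless property of the $\xi_k$, and a continuation piece handled by restarting at $\sigma$ and using the $\alpha$-comparison to dominate the Poisson points in $\Gamma\times[0,\tfrac12 v\ell\alpha]$ by $N(\Gamma)$. The only cosmetic difference is that the paper first reduces the continuation term to $\sup_{z'}\mathbb{Q}_{z'}[G_{T_\Delta}(\hat z)>\tfrac v2\ell]$ via the Markov property of $(z_k)$ and the independence of the $\xi_k$, and only then invokes the $\alpha$-bound under a fresh $\mathbb{Q}_{z'}$, whereas you carry the residual Poisson process $\eta^{(\sigma)}$ along explicitly before identifying it with a fresh copy; these are equivalent formulations.
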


Before proving the above theorem, let us give an idea of what each term in the above bound represents. In order for $G_{T_\Delta}(\hat z)$ to get past $v \ell$, it must first overcome $\ell$, which explains the first term in the above bound. Then the two terms inside the parenthesis above correspond respectively to the overshooting probability and a large deviations term. We can expect the second term to decay fast as $v$ grows, since $N(\Gamma)$ becomes much smaller than the expected value of $\eta(\Gamma \times [0, \tfrac{1}{2}v\ell\alpha])$.

\begin{proof}
 Define the stopping time (with respect to the filtration $\mathcal{F}_n = \sigma(z_k, \xi_k, k \leq n))$
 \begin{equation}
  \label{e:Tell}
  T_\ell = \inf\{ k \geq 1; G_k({\hat z}) \geq \ell\}.
 \end{equation}
 Now, for any $v \geq 2$, we can bound
$\mathbb{Q}[G_{T_\Delta} ({\hat z}) \geq v \ell]$ by
\begin{equation}
\label{e:Qtwoterms}
   \mathbb{Q} \big[T_\ell < \infty, \;
G_{T_\ell}({\hat z}) \geq \tfrac v2 \ell \big] +
\mathbb{Q} \big[T_\ell <  \infty, \; G_{T_\ell}({\hat z}) < \tfrac v2 \ell, \; G_{T_\Delta}({\hat z}) - G_{T_\ell}({\hat z}) > \tfrac v2 \ell \big]
\end{equation}
(observe that $\mathbb{Q}[G_{T_\Delta}({\hat z})\geq \ell]
=\mathbb{Q}[T_\ell<\infty]$).
 We start by estimating the first term in the above sum, which equals
(using the memoryless property of the exponential distribution)
\begin{align}
 \label{e:Qfirstterm}
 \nonumber
  \smash{\sum_{n \geq 1}} & E^\mathbb{Q} \Big( G_{n-1}({\hat z}) < \ell,
\mathbb{Q} \big[ \xi_n g(z_{n-1},{\hat z})
 > \tfrac v2 \ell - G_{n-1} ({\hat z}) \mid z_{n-1}, G_{n-1} \big] \Big)\\
 \nonumber
  & \leq \sum_{n \geq 1} E^\mathbb{Q} \Big( G_{n-1}({\hat z}) < \ell,
\mathbb{Q}[ \xi_1 g(z_{n-1},{\hat z}) > \ell - G_{n-1}] \, \mathbb{Q}
\big[\xi_1 g(z_{n-1},{\hat z}) > \big( \tfrac v2 -1 \big) \ell \big] \Big)\\
  & \leq \mathbb{Q}[T_{\ell} < \infty]
\sup_{z' \in \Sigma} \mathbb{Q}\big[\xi_1 g(z',{\hat z})
> \big( \tfrac v2 -1 \big) \ell \big]\\
 \nonumber
  & \leq \mathbb{Q}[T_{\ell} < \infty]
\exp \big\{-\big(\tfrac v2 - 1 \big) \big\}.
\end{align}
 We now turn to the bound on the second term
in~\eqref{e:Qtwoterms}, which is
 \begin{equation}
 \label{e:Qsecterm}
  \begin{split}
   E^\mathbb{Q} & \big(T_\ell <  \infty, \;
G_{T_\ell}(\hat z) < \tfrac v2 \ell,
 \mathbb{Q} [G_{T_\Delta}({\hat z}) - G_{T_\ell}({\hat z})
> \tfrac v2 \ell \mid G_1, \dots, G_{T_\ell}]\big)\\
   & \leq \mathbb{Q} \big[T_\ell <  \infty \big] \; \smash{\sup_{z'}}
 \; \mathbb{Q}_{z'} [G_{T_\Delta}({\hat z}) > \tfrac v2 \ell ].
  \end{split}
 \end{equation}
 Now using that for any $z' \in \Sigma$
 \begin{equation}
  G_{T_\Delta}(z') = \sum_{k=1}^{T_\Delta} \xi_k g(z_{k-1},z') \geq \sum_{k=1}^{T_\Delta} \alpha \xi_k g(z_{k-1},{\hat z}) \1{\Gamma}(z') = \alpha G_{T_\Delta}({\hat z}) \1{\Gamma}(z').
 \end{equation}
 we obtain that for all~$z'$
 \begin{equation}
 \label{e:GbyPoisson}
  \begin{split}
   \mathbb{Q}_{z'}\big[G_{T_\Delta}({\hat z}) \geq \tfrac v2 \ell \big]
& \leq \mathbb{Q}_{z'} \Big[ G_{T_\Delta}(z) \geq \frac{1}{2 }v \ell\alpha, \text{ for every $z \in \Gamma$} \Big]\\
   & \leq \mathbb{Q}_{z'} \big[ \eta( \Gamma
\times [0, \tfrac{1}{2} v \ell\alpha] ) \leq N(\Gamma) \big].
  \end{split}
 \end{equation}
 Joining \eqref{e:Qtwoterms} with~\eqref{e:Qfirstterm}, \eqref{e:Qsecterm}
and the above we obtain the desired result.
\end{proof}

Unfortunately, the simulation of a single Markov chain will not suffice for our purposes in this work. As suggested by the definition of random interlacements in terms of a collection of random walks (see \eqref{e:Iu}), we will need to apply the above scheme to construct a sequence of independent Markov chains on $\Sigma$ and for this aim, we will make use of the same Poisson point process $\m$. This is done in Proposition~\ref{p:poisson} below, which requires some further definitions.

Suppose that in some probability space $(\M, \mathcal{L}, \mathcal{P})$ we are given a collection of random elements $(Z^j_k)_{j, k \geq 1}$ of $\Sigma$ such that
\begin{align}
\label{e:transit}
 & \partext{for any given $j \geq 1$, the sequence $(Z^j_1, Z^j_2, \dots)$ is a Markov chain on $\Sigma$, \\ characterized by $\mathcal{P}[Z^j_{k} \in dz\mid Z^j_{k-1}] = g(Z^j_{k-1}, z) \mu(dz)$, for $k = 1, 2, \dots$}\\
\label{e:indMarkov}
 & \text{for distinct values of $j$, the above Markov chains are independent.}
\end{align}
Recall that we interpret \eqref{e:transit} for $k = 1$ as a notation for the starting distribution of the chain as we did in \eqref{e:startlaw}. However, we are allowed to impose different starting laws (for distinct values of $j$) by choosing the $Z^j_0$'s. Although they have a possibly different starting distribution, they all evolve independently and under the same transition laws.

Suppose that for each $j \geq 1$,
\begin{equation}
 \text{the hitting time of~$\Delta$ (as below \eqref{e:Expectcount}) is $\mathcal{P}_{\!Z^j_0}$-a.s. finite,}
\end{equation}
where $\mathcal{P}_{z}$ denotes the law of this Markov chain evolution starting from $z$.

In what follows, we are going to use a single Poisson point process $\m$ to simulate all the above Markov chains $(Z^j_k)$ until they hit~$\Delta$. We do this by simply repeating the procedure of Proposition~\ref{p:xiGi} following the lexicographic order $(j,k) \preccurlyeq (j', k')$ if $j < j'$ or $j = j'$ and $k \leq k'$. This construction results in the accumulation of the soft local times of all the chains, which is essential in proving our main theorem.

In the same spirit of the definition \eqref{e:xisingle}, we set $G^1_0 \equiv 0$ and define inductively, for $n = 1, 2, 3,\dots$
\begin{equation}
 \begin{split}
  & \xi^1_n :=  \inf \big\{ t \geq 0; \text{ $\exists (z_\lambda, v_\lambda)
\notin \{(z^1_k, v^1_k)\}_{k=1}^{n-1}$; $G^1_{n-1}(z_\lambda) + t g(z^1_{n-1}, z_\lambda) \geq v_\lambda$}\big\},\\
  & G^1_n(z) = G^1_{n-1}(z) + \xi^1_{n} \; g(z^1_{n-1}, z), \text{ and}\\
  & (z^1_n, v^1_n) \text{ as the unique pair $(z_\lambda, v_\lambda) \notin \{(z^1_k,v^1_k)\}_{k=1}^{n-1}$ with $G^1_n(z_\lambda) = v_{\lambda}$}.
 \end{split}
\end{equation}

We write $T^1_\Delta$ for the hitting time of $\Delta$ by the chain $(z^1_1, z^1_2, z^1_3,\dots)$. Applying Proposition~\ref{p:xiGi}, we obtain that $(z^1_1, \dots, z^1_{T^1_\Delta})$ is distributed as $(Z^1_1, \dots, Z^1_{T_\Delta})$ under the law~$\mathcal{P}$ and that
\[
\m' := \qquad 
\sum\limits_{\mathclap{(z_\lambda, v_\lambda) \notin \{(z^1_n,v^1_n) \}_{n \leq T^1_\Delta}}}
\qquad \delta_{(z_\lambda, v_\lambda - G^1_{T^1_\Delta}(z_\lambda))}
\]
is distributed as~$\m$ and independent of the above.

Now that we are done simulating the first Markov chain up to time $T^1_{\Delta}$ using $\m$, let us continue the above procedure in order to obtain from $\eta'$ the chain $(Z^2_k)_{k \geq 1}$ and so on. Supposing we have concluded the construction up to $m-1$, then let $G^m_0 \equiv 0$ and define for $n = 1, \dots, T^m_\Delta$ ($T^m_\Delta$ stands for the absorption time of the $m$'th chain),
\begin{equation}
 \label{e:xis}
 \begin{split}
   & \xi^m_n :=  \inf \big\{ t \geq 0; \text{ $\exists (z_\lambda, v_\lambda) \notin \{(z^j_{k}, v^j_k\}_{(j,k) \preccurlyeq (m,n-1)}$;}\\
   & \hspace{4cm} \text{${\textstyle \sum_{j=1}^{m-1}}
     G^{j}_{T^{j}_\Delta}(z_\lambda) + G^m_{n-1}(z_\lambda) + t g(z^m_{n-1}, z_\lambda) \geq v_\lambda$}\big\},\\
   & G^m_n(z) = G^m_{n-1}(z) + \xi^m_{n} \; g(z^m_{n-1}, z), \text{ and}\\
   & \text{$(z^m_n, v^m_n) \notin \{(z^j_k,v^j_k)\}_{(j,k)
       \preccurlyeq (m,n-1)}$ with ${\textstyle \sum_{j=1}^{m-1}}
     G^{j}_{T^{j}_\Delta}(z_\lambda) + G^m_n (z_\lambda) =
     v_{\lambda}$}.
 \end{split}
\end{equation}

The following proposition summarizes the main properties of the above construction and its proof is a straightforward consequence of Proposition~\ref{p:xiGi}.
\begin{proposition}
 \label{p:poisson}
 Suppose we are given starting densities $g(Z^j_0, \cdot)$ ($j \geq 1$) and transition densities $g(\cdot, \cdot)$ of a Markov chain as in \eqref{e:transit}. Then, defining $\xi^j_k$, $G^j_k$ and $z^j_k$ for $j = 1, 2, \dots$ and $k = 1, \dots, T^j_\Delta$, as in \eqref{e:xis} one has
\begin{align}
 \label{e:xi11}
  & (\xi^j_k, {j \geq 1, k \leq T^j_\Delta}) \text{ are i.i.d.\ Exp$(1)$-random variables and}\\
 \label{e:Zzs}
  & (z^j_k, j \geq 1, k \leq T^j_\Delta) \overset{\smash{d}}\sim
(Z^j_k, j \geq 1, k \leq T^j_\Delta) \text{ are independent of $\xi^j_k$'s}.
\end{align}
\end{proposition}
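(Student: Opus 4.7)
The plan is to prove Proposition~\ref{p:poisson} by induction on the chain index $m$, using Proposition~\ref{p:xiGi} as the workhorse at every step. The essential idea is that after exhausting the $m$-th chain, the ``residual'' Poisson point process (obtained by deleting the chosen atoms and shifting vertically by the accumulated soft local time) is again distributed as the original $\eta$ and independent of everything produced so far, so we can simply feed it into the construction of the $(m+1)$-st chain.

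For the base case $m=1$, I would apply Proposition~\ref{p:xiGi} inductively in the step index $k$, starting from the density $g(Z^1_0,\cdot)$ and the full Poisson point process $\eta$. For each deterministic $n$, the proposition yields that $(\xi^1_k)_{k\leq n}$ are i.i.d.\ $\Exp(1)$, that $(z^1_k)_{k\leq n} \laweq (Z^1_k)_{k\leq n}$ (matching the prescribed Markov transitions), that these two families are jointly independent, and that the shifted residual point process is a fresh copy of $\eta$ independent of them. To pass from deterministic $n$ to the random time $T^1_\Delta$, I use that $T^1_\Delta$ is almost surely finite and decompose over $\{T^1_\Delta=n\}$; alternatively, one can exploit that $\Delta$ is absorbing with $g(\Delta,\cdot)=\mathbf{1}_{\{\Delta\}}$, so after time $T^1_\Delta$ the soft local time is only incremented at the cemetery and leaves the residual process on $\Sigma\setminus\{\Delta\}$ unchanged, allowing us to take $n\to\infty$ without affecting any claim.

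For the inductive step, assume the statement for chains $1,\dots,m-1$. In particular, the residual point measure
\begin{equation*}
  \m^{(m-1)} := \!\!\!\!\sum_{\mathclap{(z_\lambda,v_\lambda)\notin\bigcup_{j<m}\{(z^j_k,v^j_k)\}_{k\leq T^j_\Delta}}}\!\!\!\!\delta_{\big(z_\lambda,\; v_\lambda-\sum_{j=1}^{m-1}G^j_{T^j_\Delta}(z_\lambda)\big)}
\end{equation*}
is distributed as $\eta$ and independent of all $(\xi^j_k,z^j_k)$ with $j<m$. Definition~\eqref{e:xis} shows that the construction of $(\xi^m_k,z^m_k)_{k\leq T^m_\Delta}$ is precisely the single-chain procedure of Proposition~\ref{p:xiGi} applied to $\m^{(m-1)}$ with starting density $g(Z^m_0,\cdot)$ (the vertical shift by $\sum_{j<m}G^j_{T^j_\Delta}$ inside the infimum is exactly what turns $\eta$ into $\m^{(m-1)}$). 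Applying the base-case argument to this fresh Poisson process gives the required distributional identity and independence for the $m$-th chain, and exhibits the new residual process $\m^{(m)}$ as another independent copy of $\eta$, closing the induction. Combining all scales through~\eqref{e:indMarkov} yields \eqref{e:xi11}--\eqref{e:Zzs}.

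The main obstacle is purely organizational: bookkeeping the stopping times $T^j_\Delta$ and making the ``pass from deterministic $n$ to stopping time'' rigorous in a single clean step. Once one adopts the cemetery viewpoint (so that $G^j_k$ past $T^j_\Delta$ only grows at $\Delta$ and the lexicographic order in \eqref{e:xis} behaves well), there is essentially no obstacle beyond the notational care required to specify the filtration against which the residual Poisson processes are independent.
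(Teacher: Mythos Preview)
Your proposal is correct and follows exactly the approach the paper intends: the paper simply states that the proof ``is a straightforward consequence of Proposition~\ref{p:xiGi}'', and your induction on the chain index~$m$, feeding the residual Poisson process back into Proposition~\ref{p:xiGi} at each stage, is precisely how one unpacks that remark. Your discussion of passing from deterministic times to~$T^j_\Delta$ via the absorbing cemetery is more careful than anything the paper writes, but entirely in the same spirit.
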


The most relevant conclusion of the proposition is~\eqref{e:Zzs}, showing that our method indeed provides a way to simulate a sequence of independent Markov chains.

\section{Construction of random interlacements from a soup of excursions}
\label{s:altern_constr}
In this section we use Proposition~\ref{p:poisson} to construct
random interlacements in an alternative way. The advantage
of this new construction is that it is more ``local'' than the usual
one, i.e., it does not reveal the interlacement configuration
far away from the set of interest; this of course
facilitates the decoupling of the configuration on different sets,
and that is why we consider this construction to be
the key idea of this paper. Note that the canonical construction
of the random interlacements (presented in Section~\ref{ss:ri})
does not have this property of ``localization'',
since it is quite probable that
many walkers would do \emph{long} excursions away from the
set of interest before eventually coming back.

Let us start with a simple decomposition of random interlacements that prepares the ground for the main construction of this section.

\subsection{Decomposition of random interlacements}
\label{s:decompRI}
A crucial ingredient in proving our main result is a decomposition of the interlacement set $\mathcal{I}^u$ that we now describe. For the rest of this section, let $K$ be a fixed finite subset of $\Z^d$.

Consider first the map $s_K:W^*_K \rightarrow W$ defined by
\begin{equation}
s_K(w^*) \text{ is the unique trajectory $w \in W$ with $\pi^*(w) = w^*$ and $H_K(w) = 0$.}
\end{equation}
We also introduce, for $w \in W$, the one-sided trajectories $w^+ = (X_i(w))_{i \geq 0}$ and $w^- = (X_{-i}(w))_{i \geq 0}$ in $W_+$. These can be seen as the future and past of $w$.

Let us define the space of point measures
\begin{equation}
 \label{e:M}
 \begin{split}
  M = \bigg\{ \;\; \chi = \smash{\sum_{i \in I}} \delta_{(w_i,u_i)} \Big| \; \;
  \begin{split}
   & I \subset \mathbb{N}, w_i \in  W_+, u_i \in \mathbb{R}_+ \text{ and }\\
   & \omega(W_+ \times [0,u]) < \infty \text{ for every } u\geq 0
  \end{split} \;\; \bigg\},
 \end{split}
\end{equation}
endowed with the $\sigma$-algebra $\mathcal{M}$ generated by the evaluation maps $\chi \mapsto \chi(D)$ for $D \in \mathcal{W}_+ \otimes \mathcal{B}(\mathbb{R}_+)$. And for $\chi = \sum_i \delta_{(w_i,u_i)}$ we extend the definition in~\eqref{e:Iu} to~$M$ as follows
\begin{equation}
 \label{e:Iuagain}
 \mathcal{I}^u(\chi) = \bigcup_{i; u_i < u} \Range(w_i).
\end{equation}

We can now introduce, for $\omega = \sum_i \delta_{(w^*_i, u_i)} \in \Omega$, the maps $\chi^+_{K}, \chi^-_{K}: \Omega \to M$ by
\begin{equation}
 \label{e:chipm}
  \chi^+_{K}(\omega) = \sum_{i; w^*_i \in W^*_K} \delta_{(s_K(w^*_i)^+, u_i)} \qquad \text{and} \qquad \chi^-_{K}(\omega) = \sum_{i; w^*_i \in W^*_K} \delta_{(s_K(w^*_i)^-, u_i)} \quad \text{in $M$.}
\end{equation}
We also define the analogous point processes $\chi^+_{K,u}$ and $\chi^-_{K,u}$ where the summations are taken only over $u_i \leq u$.

The main observation concerning these point processes is stated in the following proposition, which is a direct consequence of \eqref{e:QK} and \eqref{e:nuQK}.
\begin{proposition}
 \label{p:chiKu}
 For any finite set $K \subset \Z^d$, the law of $(\chi^+_{K}, \chi^-_{K})$ under $\mathbb{P}$ is a Poisson point process on $(M \times M, \mathcal{M} \otimes \mathcal{M})$ with intensity measure characterized by
 \begin{equation}
  \zeta_{K} \big(A \times [a,b] \times B \times [c,d]\big)
= \Delta\big((a,b)\times(c,d)\big) \sum_{x \in K} e_K(x) \; P_x \big[A \big] \; P_x \big[ B \mid \tilde H_K = \infty \big],
 \end{equation}
 for $A, B \in \mathcal{W}_+$ and $a < b,c < d \in \R$. Where $\Delta$ is the Lebesgue measure at the diagonal in $\R^2$ divided by $\sqrt{2}$.
\end{proposition}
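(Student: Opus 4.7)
The plan is to identify $(\chi^+_K, \chi^-_K)$ as the image of a suitable restriction of the underlying Poisson point process $\omega = \sum_i \delta_{(w^*_i,u_i)}$ under a measurable map, and then apply two standard tools for Poisson point processes: the restriction theorem and the mapping theorem.

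First I would restrict $\omega$ to the measurable set $W^*_K \times \R_+$; this is meaningful since $\omega(W^*_K \times [0,u]) < \infty$ almost surely by the definition of $\Omega$ in \eqref{e:Omega}. The restriction theorem, combined with identity \eqref{e:nuQK}, shows that this restricted process is Poissonian with intensity
\begin{equation*}
(\mathbf{1}_{W^*_K} \cdot \nu) \otimes \d u \; = \; (\pi^* \circ Q_K) \otimes \d u.
\end{equation*}
Next I would introduce the measurable map
\begin{equation*}
\phi\colon W^*_K \times \R_+ \longrightarrow (W_+ \times \R_+) \times (W_+ \times \R_+),
\qquad \phi(w^*, u) := \bigl((s_K(w^*)^+, u),\, (s_K(w^*)^-, u)\bigr).
\end{equation*}
By the definitions in \eqref{e:chipm}, the pair $(\chi^+_K, \chi^-_K)$ is precisely the image of the restricted process under $\phi$. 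The mapping theorem then yields that $(\chi^+_K, \chi^-_K)$ is itself Poissonian, with intensity $\phi_*\bigl[(\pi^* \circ Q_K) \otimes \d u\bigr]$.

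All that remains is to identify this pushforward with $\zeta_K$, which is a direct unpacking of \eqref{e:QK}. That formula shows that $Q_K$ is supported on $\{w \in W : X_0(w) \in K\}$, hence on $\{H_K(w) = 0\}$, so $s_K \circ \pi^*$ acts as the identity on $\supp(Q_K)$; therefore the spatial pushforward is simply the image of $Q_K$ under $w \mapsto (w^+, w^-)$, which by \eqref{e:QK} evaluates to
\begin{equation*}
\sum_{x \in K} e_K(x)\, P_x[A]\, P_x[B \mid \tilde H_K = \infty].
\end{equation*}
On the temporal side, $\phi$ copies the coordinate $u$ into both slots, so the Lebesgue measure $\d u$ pushes forward to the diagonal measure $\Delta$ on $\R_+ \times \R_+$. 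Combining the two observations gives exactly $\zeta_K$ as stated.

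I expect no serious obstacle beyond the bookkeeping of matching the forward piece $w^+$ (which carries the unconditioned factor $P_x[\,\cdot\,]$, describing the walk issued from $x \in K$) with the time-reversed past $w^-$ (which carries the conditioned factor $P_x[\,\cdot \mid \tilde H_K = \infty]$, describing a walk from $x$ that never returns to $K$), in accordance with the left-hand side of \eqref{e:QK}. The mild subtlety worth highlighting is that both $\chi^+_K$ and $\chi^-_K$ share the same label $u_i$ coming from $\omega$, which is precisely why the intensity collapses onto the diagonal $\Delta$ rather than factorizing as $\d u_1 \otimes \d u_2$.
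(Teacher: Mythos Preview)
Your proposal is correct and is precisely the argument the paper has in mind: the paper gives no detailed proof at all, stating only that the proposition ``is a direct consequence of \eqref{e:QK} and \eqref{e:nuQK}.'' Your restriction-plus-mapping argument is exactly how one unpacks that direct consequence, and the bookkeeping you describe (identifying $s_K\circ\pi^*$ with the identity on $\supp(Q_K)$, and the diagonal collapse of the $u$-coordinate) is accurate.
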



A way to rephrase the above proposition is to say that we can simulate the pair $(\chi^+_{K,u}, \chi^-_{K,u})$ as follows:
\begin{itemize}
 \item Let $\Theta^K_u$ be a Poisson($u \capacity(K)$)-distributed random variable,
 \item choose i.i.d.\ points $X^1_0, \dots, X^{\Theta^K_u}_0$ with law $\bar e_K$ and
 \item from each point $X^j_0$, start two trajectories, with laws given respectively by $P_{X^j_0}$ and $P_{X^j_0}[ \; \cdot \; | \tilde H_K = \infty]$.
\end{itemize}

Given a finite set $K \subset \Z^d$, we are going to decompose the interlacement set $\mathcal{I}^u$ as the union of three sets $\mathcal{I}^u_{K,+}$, $\mathcal{I}^u_{K,-}$ and $\widehat{\mathcal{I}}^u_{K}$ given by
\begin{equation}
 \label{e:Ipm}
 \begin{array}{c}
  \mathcal{I}^u_{K,+} (\omega) = \mathcal{I}^u \big( \chi^+_K(\omega) \big),\\
  \mathcal{I}^u_{K,-} (\omega) = \mathcal{I}^u \big( \chi^-_K(\omega) \big),
\text{ and} \\
  \widehat{\mathcal{I}}^u_{K}(\omega) = \mathcal{I}^u
\big( 1\{W^* \setminus W^*_K\} \cdot \omega \big),
 \end{array}
\end{equation}
recall the definitions \eqref{e:Iu} and \eqref{e:Iuagain}.

Roughly speaking, the sets $\mathcal{I}^u_{K,+}$ and $\mathcal{I}^u_{K,-}$ correspond respectively to the future and past of the trajectories of $\mathcal{I}^u$ that hit~$K$, while $\widehat{\mathcal{I}}^u_K$ encompasses the trajectories not hitting~$K$. This decomposition will be crucial for obtaining the decoupling in Theorem~\ref{t_main} and we now present its main properties.
\begin{proposition}
 \label{p:decompose}
 For any finite $K \subset \Z^d$ and $u \geq 0$,
 \begin{align}
 \label{e:unionthree}
  & \mathcal{I}^u = \mathcal{I}^u_{K,+} \cup \mathcal{I}^u_{K,-} \cup \widehat{\mathcal{I}}^u_K, \text{ for every $\omega \in \Omega$,}\\
  & \text{$\mathcal{I}^u \cap K = \mathcal{I}^u_{K,+}$ $\mathbb{P}$-a.s.}\\
 \label{e:independ}
  & \text{$\widehat{\mathcal{I}}_{K,u}$ is independent of $(\mathcal{I}^u_{K,+}, \mathcal{I}^u_{K,-})$.}
 \end{align}
\end{proposition}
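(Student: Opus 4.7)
The plan is to treat the three statements as essentially formal consequences of the definitions of $\chi^\pm_K$ together with basic properties of Poisson point processes. The two set identities should follow trajectory-by-trajectory from the fact that $s_K$ places the first hitting time of $K$ at the time origin, while the independence assertion \eqref{e:independ} will follow from the fact that the restrictions of a single Poisson point process to disjoint measurable sets are themselves independent Poisson point processes. I do not expect any serious difficulty: the main care is in keeping track of how $s_K$ acts on each trajectory, and in identifying the appropriate restrictions of $\omega$.

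For the first two identities I would fix $\omega=\sum_i\delta_{(w^*_i,u_i)}\in\Omega$ and split the sum according to whether $w^*_i\in W^*_K$ or not. If $w^*_i\notin W^*_K$ then $\Range(w^*_i)\cap K=\emptyset$ and this summand contributes only to $\widehat{\mathcal I}^u_K$. If instead $w^*_i\in W^*_K$, then since $\pi^*$ forgets time shifts we have $\Range(w^*_i)=\Range(s_K(w^*_i))$, and the range of a doubly-infinite trajectory is visibly the union of the ranges of its forward and backward halves; taking the union over $i$ with $u_i\leq u$ then gives \eqref{e:unionthree}. For the identity involving $\mathcal I^u\cap K$ I would exploit the key property $H_K(s_K(w^*_i))=0$, which forces $s_K(w^*_i)(n)\in K^c$ for every $n<0$; hence $\Range(s_K(w^*_i)^-)\cap K=\{s_K(w^*_i)(0)\}$, and this single vertex is already the starting point of $s_K(w^*_i)^+$. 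Consequently $\Range(w^*_i)\cap K=\Range(s_K(w^*_i)^+)\cap K$; unioning over the indices with $u_i\leq u$ yields $\mathcal I^u\cap K=\mathcal I^u_{K,+}\cap K$, which is the stated identity read as an equality of traces on $K$ (note that $\mathcal I^u_{K,+}$ itself typically extends outside of $K$, so strictly speaking the right-hand side should be intersected with $K$).

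For the independence \eqref{e:independ}, the pair $(\chi^+_K,\chi^-_K)$ --- and therefore $(\mathcal I^u_{K,+},\mathcal I^u_{K,-})$ --- is by construction a measurable function of the restriction $\omega\vert_{W^*_K\times\R_+}$, whereas $\widehat{\mathcal I}^u_K$ depends only on $\omega\vert_{(W^*\setminus W^*_K)\times\R_+}$. Since $W^*_K$ and its complement in $W^*$ are disjoint and measurable, the two restrictions of $\omega$ are independent Poisson point processes (a standard property of PPPs, see e.g.\ \cite{R08}), so the independence of $\widehat{\mathcal I}^u_K$ from $(\mathcal I^u_{K,+},\mathcal I^u_{K,-})$ follows. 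If anywhere, the only obstacle lies in checking measurability of the maps $\chi^\pm_K$ and of $\omega\mapsto\widehat{\mathcal I}^u_K$ with respect to their natural restrictions, but both follow directly from their definitions in \eqref{e:chipm} and \eqref{e:Ipm}.
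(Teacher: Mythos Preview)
Your argument is correct and, for \eqref{e:unionthree} and \eqref{e:independ}, essentially identical to the paper's: split $\omega$ according to $W^*_K$ versus its complement, use $\Range(w^*)=\Range(s_K(w^*)^+)\cup\Range(s_K(w^*)^-)$, and invoke independence of restrictions of a Poisson point process to disjoint measurable sets.

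For the second identity your route is slightly different from the paper's, and in fact more direct. The paper first uses \eqref{e:unionthree} together with $\widehat{\mathcal I}^u_K\cap K=\varnothing$ to get $\mathcal I^u\cap K\subset \mathcal I^u_{K,+}\cup\mathcal I^u_{K,-}$, and then appeals to Proposition~\ref{p:chiKu} (the law of $(\chi^+_K,\chi^-_K)$) to argue that $\mathcal I^u_{K,-}\cap K\subset\mathcal I^u_{K,+}$ holds $\mathbb P$-a.s., since the backward trajectories are conditioned on $\tilde H_K=\infty$. You instead observe directly from the definition of $s_K$ that $H_K(s_K(w^*))=0$ forces $s_K(w^*)(n)\notin K$ for every $n<0$, so that $\Range(s_K(w^*)^-)\cap K=\{s_K(w^*)(0)\}\subset\Range(s_K(w^*)^+)$. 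This yields the identity \emph{for every} $\omega\in\Omega$, not merely $\mathbb P$-a.s., and avoids any reference to the law of the process. Your remark that the statement should really read $\mathcal I^u\cap K=\mathcal I^u_{K,+}\cap K$ is also well taken.
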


\begin{proof}
To prove \eqref{e:unionthree}, one should decompose the union giving $\mathcal{I}^u$ into $W^*_K$ and $W^* \setminus W^*_K$, observing that for each $w^* \in W^*_K$, $\Range(w^*) = \Range(s_K(w^*)^+) \cup \Range(s_K(w^*)^-)$.

To see why the second statement is true, observe first that $\mathcal{I}^u \cap K \subset \mathcal{I}^u_{K,+} \cup \mathcal{I}^u_{K,-}$, since we have \eqref{e:unionthree} and $\widehat{\mathcal{I}}^u_K$ is disjoint from~$K$. Then, observe that $\mathcal{I}^u_{K,-} \cap K$ is $\mathbb{P}$-a.s. contained in $\mathcal{I}^u_{K,+}$, which follows from Proposition~\ref{p:chiKu}, since for every $x \in \supp(e_K)$, $\Range(w) \cap K = \{X_0(w)\}$, $P_x[\; \cdot\mid \tilde H_K = \infty]$-a.s.

Finally, to prove~\eqref{e:independ}, we observe that these two sets are determined by the outcome of the Poisson point process~$\omega$ into the disjoint spaces of trajectories~$W^*$ and~$W^* \setminus W^*_K$. This finishes the proof of Proposition~\ref{p:decompose}
\end{proof}

We observe also that the random variable
\begin{equation}
 \label{e:ThetaKu}
 \Theta^K_u = \chi^+_{K}(W_+ \times [0,u]) = \chi^-_{K}(W_+ \times [0,u]) \text{ is Poisson($u \capacity(K)$)-distributed.}
\end{equation}



\subsection{Chopping into excursions}
\label{s:chopping}
Fix a finite set $\V \subset \Z^d$ and a set $C \subset \Z^d$ such that
\begin{equation}
 \label{e:dCfinite}
 \partial{C} \text{ is finite.}
\end{equation}
The above condition is equivalent to $C$ being either finite or having finite complement, see Figure~\ref{f:CD} below. Suppose also that $C\cap\V = \varnothing$.
Although some of the definitions that follow will depend on both~$\V$ and~$C$, we will keep only the dependence on~$C$ explicit, since the set~$\V$ will be kept unchanged throughout proofs.

We are interested at first in the trace left by $\mathcal{I}^u_{\V,\pm}$
on the set~$C$. The random walks composing~$\mathcal{I}^u_{\V,+}$ (see~\eqref{e:Ipm}) will perform various excursions between~$C$ and~$\V$ until they finally escape to infinity. This decomposition of a random walk trajectory into excursions is crucial to our proofs and we now give the details of its definition. In fact,
one can look at Figure~\ref{f:CD}, to have a feeling of what is going to happen.

\begin{figure}[ht]
\centering \includegraphics[width = 0.95 \textwidth]{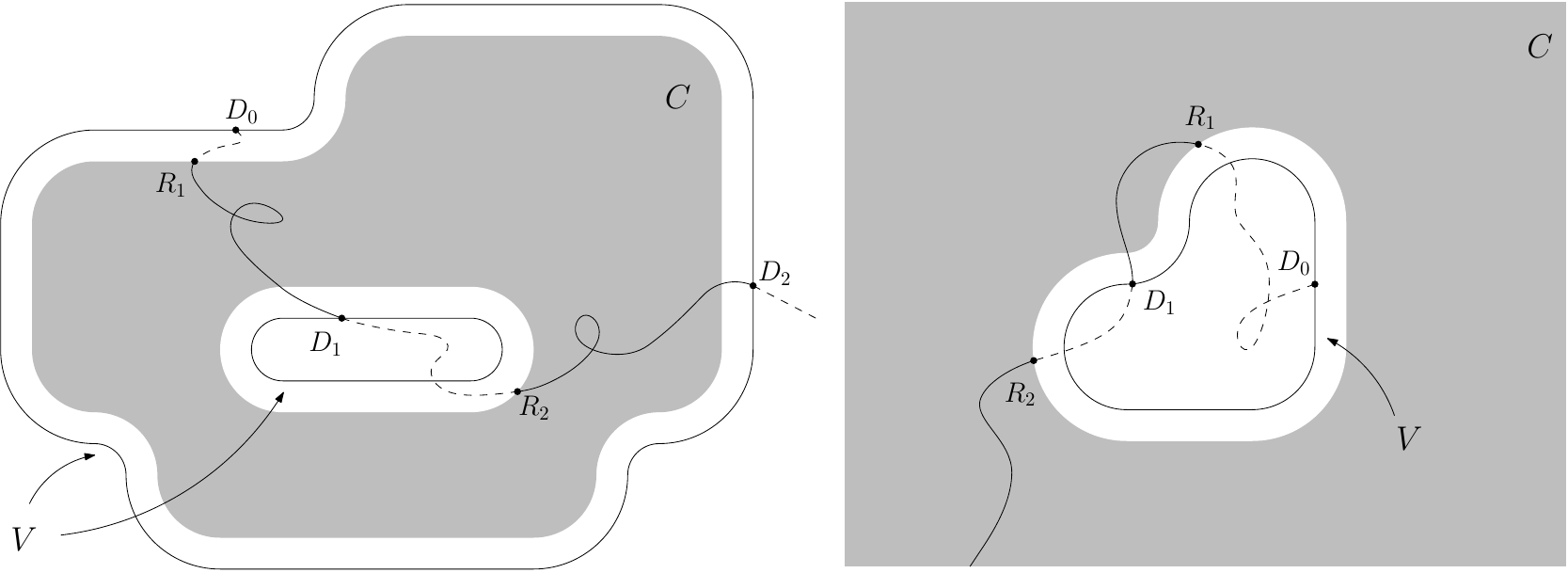}
  \caption{Typical examples of sets $C$ (gray) and $\V$ (closed curves).
On the left $C$ is finite, while on the right it has finite complement.
The stopping times $R_k$ and $D_k$ are also pictured.}
  \label{f:CD}
\end{figure}

Given a trajectory $w_+ \in W_+$ (recall \eqref{e:WW}), let us define its successive return and departure times between $C$ and $\V$:
\begin{align*}
D_0 &= 0, \qquad & & R_1 = H_C,\\
D_1 & = H_{\V} \circ \theta_{R_1} + R_1, \qquad & & R_2 = H_C \circ \theta_{D_1} + D_1,\\
D_2 & = H_{\V} \circ \theta_{R_2} + R_2 \qquad & & \text{and so on, see Figure~\ref{f:CD}.}
\end{align*}
Note that above we have omitted the dependence on $w_+$. Define,
\begin{equation}
 \label{e:tauj}
 T^C = \inf \{k \geq 1; R_k = \infty\},
\end{equation}
which is equal to one plus the random number of excursions performed by $w_+$ until escaping to infinity. Since we assumed the set $\V$ to be finite and the random walk on $\Z^d$ ($d \geq 3$) is transient, $T^C$ is finite $P$-almost surely.

The reason why we define $T^C$ as \emph{one plus} the number of excursions is to guarantee that it coincides with $T_\Delta$ as defined just after \eqref{e:Expectcount} in the construction that follows.


As mentioned before, we are interested in the intersection of $\mathcal{I}^u_{\V,+}$ (recall~\eqref{e:Ipm}) with the set~$C$. Writing $\chi^+_{\V,u} = \sum_{j = 1}^{\Theta^\V_u} \delta_{w_j}$
(where the $w$'s are ordered according to their corresponding~$u$'s),
and abbreviating $T_j^C=T^C(w_j)$, we obtain
\begin{equation}
 \label{e:IuPsi}
 C \cap \mathcal{I}^u_{\V,+} = C \;\; \cap \quad \bigcup_{\mathclap{(w_+,u)
\in \, \supp(\chi^+_{\V,u})}} \;\; \Range(w_+) = C \cap 	\bigcup_{j=1}^{\Theta^\V_u} \bigcup_{k = 1}^{T_j^C-1} \{X_{R_k}(w_j), \dots, X_{D_k}(w_j)\},
\end{equation}
where it may occur that some of the $D_k(w_j)$'s above are infinite.

We are now going to employ the techniques of Section~\ref{s:simul} to simulate the above collection of excursions using a Poisson point process. For this let $\Sigma_C$ denote the following space of paths
\begin{equation}
 \label{e:Sigma}
 \begin{split}
 \Sigma_C = \big\{ \Delta \big\} & \mcup \Big\{
 \begin{array}{c}
  w = (x_1, \dots, x_k) \text{ finite nearest neighbor path, starting}\\
  \text{at $\partial C$ and ending at its first visit to $\V$}
 \end{array}
 \Big\} \\
 & {\textstyle \bigcup} \; \Big\{
 \begin{array}{c}
  w = (x_1, x_2, \dots) \text{ infinite nearest neighbor path,}\\
  \text{starting at $\partial C$ and never visiting $\V$}
 \end{array}
\Big\},
 \end{split}
\end{equation}
where $\Delta$ is a distinguished state that encodes the fact that a given trajectory has already diverged to infinity. Illustrations of finite and infinite paths in~$\Sigma_C$ can be found in Figure~\ref{f:CD}.

Consistently with the previous discussion, we use the
shorthand $X^j_\cdot=X_\cdot^{\vphantom{j}}(w_j)$; in other words, the superscript~$j$
means that we are dealing with the $j$th walk of the construction.
The excursions induced by the random walks will
be encoded as elements of~$\Sigma_C$ as follows
\begin{equation}
 \begin{split}
 & Z^j_k = \big( X_{R_k}^j, \ldots, X^j_{D_k} \big) \in \Sigma_C,
\text{ for $k = 1, \dots, T_j^C-1$, and}\\
 & Z^j_{T_j^C} = \Delta.
 \end{split}
\end{equation}
The reason why we introduce the state $\Delta$ is to recover the description of Section~\ref{s:simul}, indicating that another trajectory is about to start.

In view of \eqref{e:IuPsi}, in order to simulate $C \cap \mathcal{I}^u_{V,+}$, we only need to construct the excursions~$Z^j_k$ with the correct law. For this, we are going to use the construction of the previous section to simulate them from a Poisson point process. In \eqref{e:radon} below, we will prove that for a fixed $j$, the sequence $Z^j_1$, $Z^j_2$, $\dots$ is a Markov chain, as required in~\eqref{e:transit} and~\eqref{e:indMarkov}.

Endow the space of paths $\Sigma_C$ with the $\sigma$-algebra $\mathcal{S}$ generated by the canonical coordinates and with the measure $\mu_C$ given by
\begin{equation}
 \label{e:mu}
 \mu_C(\mathcal{X}) = \sum_{x \in \partial C} P_{x} \big[(X_{0}, X_{1}, \dots, X_{H_\V}) \in \mathcal{X} \big] + \delta_\Delta(\mathcal{X}),
\end{equation}
where $\mathcal{X} \in \mathcal{S}$. Note that $\mu_C$ is finite due to \eqref{e:dCfinite}. We can therefore define a Poisson point process $\m = \sum_i  \delta_{(z_i,v_i)}$ on $\Sigma_C \times \R_+$ with intensity $\mu_C \otimes \d v$ as in~\eqref{e:M1}.


In order to apply Proposition~\ref{p:poisson}, we first observe that for fixed $j \geq 1$, $Z^j_k$ is a Markov chain, due to the Markovian character of the simple random walk. We then define
\begin{equation}
 f^C_y(x) := P_{y} [X_{\tilde H_C} = x]
\end{equation}
and apply the strong Markov property at~$D_{k-1}$, to obtain the Radon-Nikodym derivative
\begin{equation}
 \label{e:radon}
 \frac{\d P[Z^j_{k} \in \cdot \mid Z^j_{k-1} = z]}{\d \mu_C} (z')=
 \begin{cases}
  1, & \text{ if $z = z' = \Delta$,}\\
 & \text{ or $z'=\Delta,D_{k-1}=\infty$,}\\
  f^C_{X^j_{D_{k-1}}(z)} \big( X_0(z') \big), &
\text{ if $z, z' \neq \Delta$, $D_{k-1}<\infty$,}\\
  \smash{P_{X^j_{D_{k-1}}(z)}}[ H_C = \infty ], & \text{ if $z' = \Delta \neq z$, $D_{k-1}<\infty$,}\\
  0, & \text{ otherwise,}
 \end{cases}
\end{equation}
for all $k \geq 2$.

Not only the above shows that the sequence $Z^j_1, Z^j_2, \dots$ is Markovian, but also that the transition density of the chain satisfies
\begin{equation}
 \label{e:gtrans}
 g_C \big( (x_0, \dots, x_l), (y_0, \dots, y_m) \big) = f_{x_l}(y_0)
\end{equation}
($g$ is a density with respect to $\mu_C$, as in~\eqref{e:mu}). We are now left with the starting distributions of the Markov chains $Z^j_k$.

Recall that we are attempting to construct the measure $\chi^+_{\V,u}$, which is not independent of $\chi^-_{\V,u}$. In fact, they are conditionally independent given $\{X_0(w)\}_{w \in \supp(\chi^-_{\V,u})}$. Therefore, conditioning on $\{X^j_0\}_{j = 1, \dots, \Theta^\V_u}$, the starting density of the $j$-th chain (with respect to $\mu_C$) satisfies
\begin{equation}
 g^j_C (x_0, \dots, x_l) = f_{X^j_0}(x_0).
\end{equation}
Finally, we set $Z^j_0 = w$ where $w$ is any trajectory with $X_0(w) = X^j_0 = X^j_{D_0}$, so that \eqref{e:radon} is also satisfied for $k = 1$, in compliance with the notation in \eqref{e:transit} (see also Remark~\ref{rem4.6}).
\begin{figure}
 \centering\includegraphics[width=\textwidth]{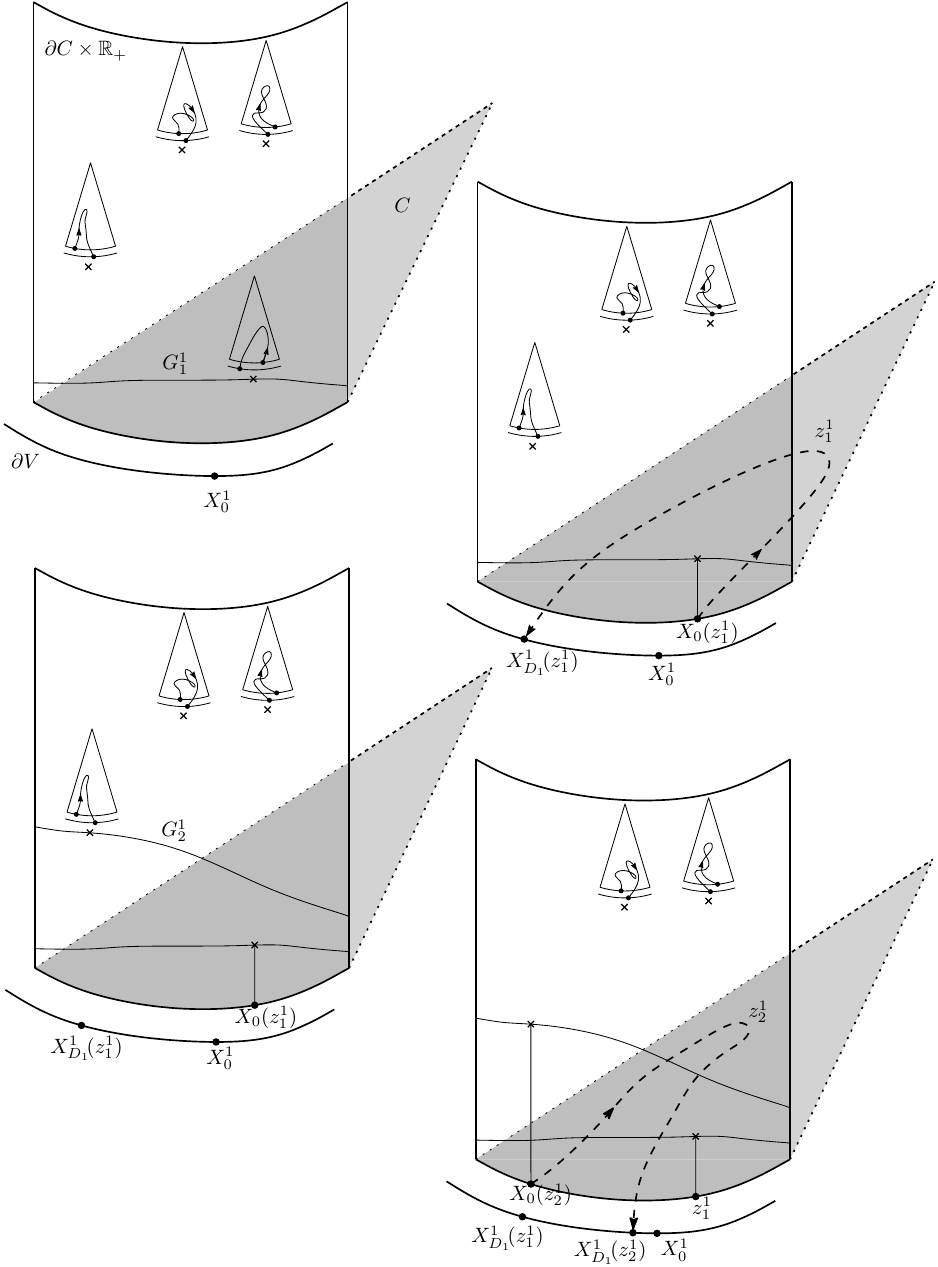}
\caption{On the construction of random interlacements on the set~$C$;
the points of~$\Sigma_C$ are substituted by points
in $\partial C\times\R_+$ with marks representing the corresponding trajectories, and the state~$\Delta$ is not pictured
}
\label{f:constr_trace}
\end{figure}

We can now follow the construction of $\xi^C_{j,k}$ and $G^C_{j,k}$, for $j \geq 1$, $k = 1, \dots, T^C_j$, as in~\eqref{e:xis}. Then, using Proposition~\ref{p:poisson}, we obtain a way to simulate the excursions $Z^j_k$ as promised. In particular, we can show that
\begin{equation}
 \label{e:samelaw}
 C \cap \mathcal{I}^u_{\V,+} \text{ is distributed as $C \cap \bigcup_{j=1}^{\Theta^V_u} \bigcup_{k=1}^{T^C_j} \Range(z^C_{j,k})$ under $\mathbb{Q}$.}
\end{equation}

See on Figure~\ref{f:constr_trace} an illustration of the first two
steps (for the first particle) of the construction of random interlacements on the set~$C$.

We now prove a proposition that relates our main result Theorem~\ref{t_main} with the above construction. To simplify the notation for the soft local time, we abbreviate the accumulated soft local time up to the $\Theta^C_u$-th trajectory
\begin{equation}
 G^{C}_u = G^C_{1,T^C_1} + G^C_{2,T^C_2} + \cdots + G^C_{\Theta^C_u, T^C_{\Theta^C_u}}.
\end{equation}

We can use Theorem~\ref{t:expectsingle} to obtain a short expression
for $E G^C_v(z)$. For this, given $j \geq 1$, we let
\begin{equation}
\label{df_rho}
 \rho^C_j(x) = \sum_{k = 1}^{T_j^C} 1_x(X^j_{R_k})
\end{equation}
count the number of times the $j$-th trajectory starts an excursion through~$x$.

Let us first recall, from~\eqref{e:gtrans}, that~$G^C_v$ depends
on $z=(x_0, x_1, \dots)$ solely through~$x_0$.
Thus, given $z, z' \in \Sigma_C$,
we define $q(z,z') = 1 \{ X_0(z) = X_0(z') \}$ to obtain that
\begin{equation}
\label{e:EGisErho}
\begin{array}{e}
E^{\mathbb{Q}} G^C_{1,T_1^C}(z) & \overset{\eqref{e:gtrans}}=
& \int q(z,z') E^{\mathbb{Q}}  G^C_{1,T_1^C} (z') \, \mu_C(\d z') \\
 & \overset{\mathclap{\text{Theorem~\ref{t:expectsingle}}}}= & \qquad
E^{\mathcal{P}} \Big( \sum_{k=1}^{T^C_j} q(Z^j_k,z)\Big)
 = E^{\mathcal{P}}\rho^C_j(X_0(z)),
\end{array}
\end{equation}
for every $z \in \Sigma_C$.
Clearly, this implies that
\begin{equation}
 \label{e:Elinear}
 E^{\mathbb{Q}} G^C_v(z) = E^{\mathbb{Q}}\Theta^C_v
 \times E^{\mathcal{P}}\rho^C_j(X_0(z)) = v\capacity(\V)
 E^{\mathcal{P}}\rho^C_j(X_0(z)).
\end{equation}

\begin{proposition}
\label{p:couple}
Let $A_1$ and $A_2$ be two disjoint subsets of $\Z^d$ with $A_2$ having finite complement. Now suppose that
\begin{gather}
 \label{e:aPsi}
 \begin{array}{c}
  \text{$\V \subset \Z^d$ is such that any path from $A_1$ to $A_2$ crosses $V$.}
 \end{array}
\end{gather}
Then, for every $u>0$ and $\eps\in(0,1)$ there exists a coupling $\mathbb{Q}$ between $\mathcal{I}^u$ and two independent random interlacements processes, $(\mathcal{I}^u_1)_{u \geq 0}$ and $(\mathcal{I}^u_2)_{u \geq 0}$ such that
\begin{equation}
 \label{e:boundlarge}
 \begin{split}
 \mathbb{Q} \big[ & \mathcal{I}^{u(1-\eps)}_k \cap A_k \; \subseteq \; \mathcal{I}^u \cap A_k \; \subseteq \; \mathcal{I}^{u(1+\eps)}_k, \text{ $k = 1, 2$} \big]\\
 & \;\; \geq 1 - \quad \sum_{\mathclap{\substack{(v,C) = (u(1 \pm \eps), A_1),
\\ (u(1 \pm \eps),A_2), (u,A_1 \cup A_2)}}} \quad  \mathbb{Q} \Big[ \big|G^{C}_v(z)-E^{\mathbb{Q}}G^{C}_v(z)\big| \geq \tfrac{\eps}3 E^{\mathbb{Q}}G^{C}_v(z) \text{ for some $z \in \Sigma_{C}$} \Big],
 \end{split}
\end{equation}
 where the soft local times above are determined in terms of $\V$.
\end{proposition}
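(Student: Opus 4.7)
The plan is to construct $\mathcal{I}^u$, $\mathcal{I}^u_1$, and $\mathcal{I}^u_2$ jointly by combining the alternative construction of random interlacements from Section~\ref{s:altern_constr} with the soft local time formalism of Section~\ref{s:simul}. The separation hypothesis on $\V$ is precisely what makes this possible: it allows the excursions of $\mathcal{I}^u$ reaching $A_1$ to be driven by a Poisson cloud of excursions that is independent of the one driving the excursions reaching $A_2$, and thus produces the two independent interlacements $\mathcal{I}^u_1, \mathcal{I}^u_2$ from (subsets of) the randomness that drives~$\mathcal{I}^u$.

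Concretely, I would introduce a single Poisson point process $\eta$ on $\Sigma_{A_1\cup A_2}\times\R_+$ with intensity $\mu_{A_1\cup A_2}\otimes dv$. Since every nearest-neighbor path from $A_1$ to $A_2$ crosses $\V$, any excursion in $\Sigma_{A_1\cup A_2}$ starting in $\partial A_k$ cannot visit $A_{3-k}$ before being absorbed by~$\V$, and so the non-cemetery part of $\Sigma_{A_1\cup A_2}$ partitions into disjoint measurable pieces $\Sigma^\ast_{A_1}\sqcup\Sigma^\ast_{A_2}$. The restriction of $\mu_{A_1\cup A_2}$ to $\Sigma^\ast_{A_k}$ coincides with the non-cemetery part of $\mu_{A_k}$, and accordingly $\eta$ splits as $\eta=\eta^1+\eta^2$ with $\eta^1,\eta^2$ independent Poisson processes that, after appending an independent cemetery component, can drive the $C=A_k$ construction of Section~\ref{s:chopping}. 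I would then use $\eta$ to simulate $\mathcal{I}^u\cap(A_1\cup A_2)$ and each $\eta^k$ independently to simulate $\mathcal{I}^u_k\cap A_k$ at the required levels $u(1\pm\eps)$, completing the three processes outside the relevant sets with further independent randomness. Under the separation hypothesis together with $A_2$ having finite complement, trajectories of $\mathcal{I}^u$ not hitting~$\V$ contribute nothing to $A_1$, while their contribution to~$A_2$ can be shared between $\mathcal{I}^u$ and $\mathcal{I}^u_2$ (using an independent copy for $\mathcal{I}^u_1$) so as to preserve both the marginals and the independence of $\mathcal{I}^u_1$ from $\mathcal{I}^u_2$.

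Granting this coupling, Corollary~\ref{c:coupleZ} reduces the two inclusions in \eqref{e:boundlarge} to the sandwich
\[
 G^{A_k}_{u(1-\eps)}(z) \;\le\; G^{A_1\cup A_2}_u(z) \;\le\; G^{A_k}_{u(1+\eps)}(z), \qquad z\in\Sigma^\ast_{A_k},\ k=1,2.
\]
The crucial identity is that for any $x\in\partial A_k$ one has $\rho^{A_k}_j(x)=\rho^{A_1\cup A_2}_j(x)$ pointwise, because the separation assumption forbids $x$ from being adjacent to any point of $A_{3-k}$, so that any random walk entry into $A_k$ at $x$ is simultaneously an entry into $A_1\cup A_2$ at $x$. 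Plugging this into \eqref{e:Elinear} gives the scaling
\[
 E^{\mathbb{Q}} G^{A_k}_v(z) \;=\; \tfrac{v}{u}\, E^{\mathbb{Q}} G^{A_1\cup A_2}_u(z) \qquad \text{for every } z\in\Sigma^\ast_{A_k}.
\]
A short arithmetic check (valid for $\eps\in(0,1)$) then shows that if each of the three soft local times $G^{A_1}_{u(1\pm\eps)},G^{A_2}_{u(1\pm\eps)},G^{A_1\cup A_2}_u$ stays within a multiplicative factor $1\pm\eps/3$ of its mean uniformly on the relevant $\Sigma^\ast_{A_k}$, the displayed sandwich holds; the bound \eqref{e:boundlarge} then follows by a union bound over these three deviation events.

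The main technical obstacle will be the bookkeeping required to couple the three interlacement processes on a single probability space while simultaneously (a) preserving all three marginal laws, (b) keeping $\mathcal{I}^u_1$ and $\mathcal{I}^u_2$ independent, and (c) identifying the `not hitting~$\V$' contributions to $A_2$ consistently between $\mathcal{I}^u$ and $\mathcal{I}^u_2$ so that the inclusions become trivially true for that piece (while \eqref{e:independ} guarantees that the internal distributions are respected). Once this setup is in place, the remainder of the argument is a routine combination of the soft local time coupling from Corollary~\ref{c:coupleZ} with the expectation identity just derived.
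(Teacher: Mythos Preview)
Your proposal is correct and follows essentially the same route as the paper's proof: you split the excursion space $\Sigma_{A_1\cup A_2}$ into the two pieces $\Sigma_{A_1}$ and $\Sigma_{A_2}$ (exactly the paper's observation~\eqref{e:Sigmadisjoint}), drive the three constructions from the same Poisson cloud, share the non-excursion randomness between $\mathcal{I}^u$ and $\mathcal{I}^u_2$ while using an independent copy for $\mathcal{I}^u_1$, and finish with the expectation identity and the arithmetic $(1+\tfrac{\eps}{3})\le (1-\tfrac{\eps}{3})(1+\eps)$ that the paper records in Figure~\ref{f_2baldes}. Two minor remarks: the paper makes the need for two independent copies of $\chi^-_\V$ (to supply independent starting points for the $A_1$ versus $A_2$ chains) more explicit than you do, so be sure your ``independent copy for $\mathcal{I}^u_1$'' includes that piece; and the reduction to the sandwich of soft local times is really a direct consequence of the construction in Proposition~\ref{p:poisson} (the collected excursions at level~$v$ are exactly the points below the graph of $G^C_v$) rather than of Corollary~\ref{c:coupleZ} per se. Your pointwise identity $\rho^{A_k}_j(x)=\rho^{A_1\cup A_2}_j(x)$ is in fact stronger than what the paper states (only the equality of expectations below~\eqref{e:Elinear}), but it is correct and your justification via the separation hypothesis is the right one.
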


We note that the above proposition is an important ingredient for the proof of Theorem~\ref{t_main}, since it relates the success probability of our decoupling with an estimate on the soft local times. In Section~\ref{s:proof}, we will bound the right hand side of \eqref{e:boundlarge} using large deviations. One should not be worried that the set $\Sigma_{C}$ may be uncountable (in case the excursions are infinite). Later we will deal with this using the fact that the soft local time depends on $z$ only through its starting point.

\begin{proof}
We are going to follow the scheme in Section~\ref{s:decompRI} in order to construct the triple $\mathcal{I}^u$, $(\mathcal{I}^u_1)_{u \geq 0}$ and $(\mathcal{I}^u_2)_{u \geq 0}$, distributed as random interlacements on $\Z^d$ as stated in the proposition. However, we will need two independent copies of some of the ingredients appearing in that construction. More precisely,
\begin{align}
\label{e:twochi}
 & \partext{let $\chi^-_{\V,1} = \textstyle \sum_i \delta_{(w^1_i,u^1_i)}$ and
 $\chi^-_{\V,2} = \textstyle \sum_i \delta_{(w^2_i,u^2_i)}$ be two independent  
random variables on $M$ (i.e., Poisson point processes on the space of labelled
trajectories) with the same law as $\chi^-_{\V}$ in \eqref{e:chipm},}\\
\label{e:twoTheta}
 &\partext{let the counting process $\Theta^{V, 1}_u = \chi^-_{\V,1}(W_+ \times [0,u])$ and $\Theta^{V, 2}_u = \chi^-_{\V,2}(W_+ \times [0,u])$ be as in \eqref{e:ThetaKu}, for $u \geq 0$, and finally}\\
\label{e:twohat}
 & \text{define two independent processes $\widehat{\mathcal{I}}^u_{\V,1}$ and $\widehat{\mathcal{I}}^u_{\V,2}$ as in \eqref{e:Ipm}.}
\end{align}

The only missing ingredients in order to construct two independent random interlacements processes following the construction of Section~\ref{s:decompRI} are the random walks composing~$\chi^+_\V$, see~\eqref{e:chipm}. Such construction will be based on Proposition~\ref{p:poisson} and that is where the coupling will take place.

Let us introduce the sets
\begin{equation}
  \begin{array}{c}
    \text{$\Sigma_{A_1 \cup A_2}$, $\Sigma_{A_1}$ and $\Sigma_{A_2}$
      given by~\eqref{e:Sigma} with $\V$ as in \eqref{e:aPsi}.}
  \end{array}
\end{equation}
Note that we have replaced the set $C$ by the three above choices, while keeping $\V$ fixed.

 We also let $\mu_{A_1 \cup A_2}$, $\mu_{A_1}$ and $\mu_{A_2}$ be the
 respective measures on these sets, given by~\eqref{e:mu}.
The first crucial
observation for this proof is the fact that
\begin{equation}
  \label{e:Sigmadisjoint}
  \begin{array}{c}
    \Sigma_{A_1 \cup A_2} \text{ is the disjoint union of $\Sigma_{A_1}$ and $\Sigma_{A_2}$ and $\mu_{A_1 \cup A_2} = \mu_{A_1} + \mu_{A_2}$}.
  \end{array}
\end{equation}
Note that we are duplicating the cemetery on $\Sigma_{A_1 \cup A_2}$, for the above to hold.

We define a Poisson point process $\m$ on $\Sigma_{A_1 \cup A_2} \times \R_+$ with intensity $\mu_{A_1 \cup A_2} \otimes \d v$ as below \eqref{e:mu}. From \eqref{e:Sigmadisjoint} we conclude that,
\begin{equation}
  \label{e:m12}
  \begin{array}{c}
    \text{$\m$ restricted to $\Sigma_{A_1}$ and $\Sigma_{A_2}$ are Poisson point processes with respective}\\
    \text{intensities $\mu_{A_1} \otimes \d v$ and $\mu_{A_2} \otimes \d v$, which are independent of each other.}\\
    \text{Moreover, an excursion $z \in \Sigma_{A_k}$ cannot intersect $A_{k'}$ with $k \neq k'$, see \eqref{e:aPsi}.}
  \end{array}
\end{equation}

We use $\chi^-_{\V,1}$ and $\chi^-_{\V,2}$ in order to define the starting points $\{X^{V,1,j}_0\}_{j = 1, \dots, \Theta^{A_1}_u}$ and $\{X^{V,2,j}_0\}_{j = 1, \dots, \Theta^{A_2}_u}$. Let us finally recall the definitions of $T^C$ from~\eqref{e:tauj}, $G^C_{j,k}$ and $z^C_{j,k}$ from \eqref{e:xis}, where~$C$ can be replaced by either of the three sets $(A_1 \cup A_2)$, $A_1$, or $A_2$. It is important to observe that we use the starting points $X^{V,1,j}_0$ for the case $C = A_1$ and~$X^{V,2,j}_0$
for both $C = A_2$ or $(A_1 \cup A_2)$. We can finally introduce
\begin{equation}
 \mathcal{J}^u_{C} = C \cap \bigcup_{j = 1}^{\Theta^C_u} \bigcup_{k = 1}^{T^C_j} \Range \big( z^C_{j,k} \big), \text{ with $C = (A_1 \cup A_2)$, $A_1$ or $A_2$}
\end{equation}
(note that we use the same Poisson point process to define the three sets above) and
\begin{align*}
 \mathcal{I}^u &= \mathcal{J}^u_{(A_1 \cup A_2)} \cup \mathcal{I}^u(\chi^-_{\V,2}) \cup \widehat{\mathcal{I}}^u_{V,2},\\
 \mathcal{I}^u_1 &= \mathcal{J}^u_{A_1} \cup \mathcal{I}^u(\chi^-_{\V,1}) \cup \widehat{\mathcal{I}}^u_{V,1}, \text{ and}\\
 \mathcal{I}^u_2 &= \mathcal{J}^u_{A_2} \cup \mathcal{I}^u(\chi^-_{\V,2}) \cup \widehat{\mathcal{I}}^u_{V,2}.
\end{align*}
We independently modify the above sets on $(A_1 \cup A_2)^c$ to obtain the correct distributions, although this is immaterial for the statement of the proposition.

To conclude the proof of the proposition, let us observe that
\begin{itemize}
 \item $(\mathcal{J}^u_{C})_{u \geq 0}$ is distributed as $(C \cap \mathcal{I}^u_{\V,+})_{u \geq 0}$, for $C = (A_1 \cup A_2)$, $A_1$ or $A_2$, see \eqref{e:samelaw}, so that $((A_1 \cup A_2) \cap \mathcal{I}^u)_{u \geq 0}$, $(A_1 \cap \mathcal{I}^u_1)_{u \geq 0}$ and $(A_2 \cap \mathcal{I}^u_2)_{u \geq 0}$ have the right distributions as under the random interlacements;
 \item $\mathcal{J}^u_{A_1}$ and $\mathcal{J}^u_{A_2}$ are independent, see \eqref{e:twochi}, \eqref{e:twoTheta} and \eqref{e:m12},
which means that $(A_1 \cap \mathcal{I}^u_1)_{u \geq 0}$ and $(A_2 \cap \mathcal{I}^u_2)_{u \geq 0}$ are also independent.
\end{itemize}

So that, using the definition of $\mathcal{I}^u$, $\mathcal{I}^u_1$ and $\mathcal{I}^u_2$,
\begin{align}
  \nonumber
  \mathbb{Q} \big[ \mathcal{I}^{u(1-\eps)}_k & \cap A_k \; \subseteq \; \mathcal{I}^u \cap A_k \; \subseteq \mathcal{I}^{u(1+\eps)}_k, \text{ $k = 1, 2$} \big]\\
  & \geq \mathbb{Q} \big[ \mathcal{J}^{u(1-\eps)}_{A_k} \subseteq \; \mathcal{J}^u_{A_1 \cup A_2} \cap A_k \; \subseteq \mathcal{J}^{u(1+\eps)}_{A_k}, \text{ $k = 1, 2$} \big]\\
  \nonumber & \geq \mathbb{Q} \big[ G^{A_k}_{u(1-\eps)}(z) \leq G^{A_1
    \cup A_2}_u (z) \leq G^{A_k}_{u(1+\eps)}(z), \text{ for all $z \in
    \Sigma_{A_k}$ and $k = 1,2$} \big].
\end{align}
Now, \eqref{e:aPsi} implies that for $x\in\partial A_k$ we have
$\varphi(x):=E^{\mathcal{P}}\rho^{A_k}_1(x)=E^{\mathcal{P}}\rho^{A_1\cup A_2}_1(x)$.
The conclusion of \eqref{e:boundlarge} is now a simple consequence of the above display and the fact that the expectation of $G^{C}_u$ is linear in~$u$ according to~\eqref{e:Elinear}
(see Figure~\ref{f_2baldes}).
\begin{figure}
 \centering \includegraphics[width=\textwidth]{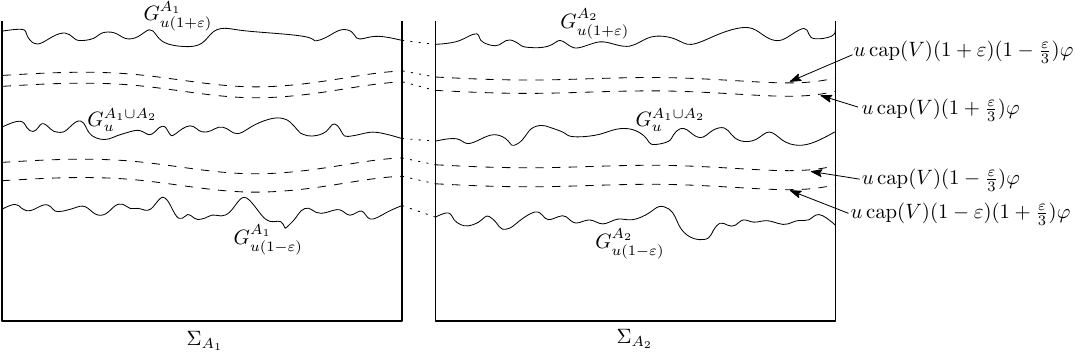}
  \caption{On the proof of Proposition~\ref{p:couple}, $\varphi$ was defined in the last paragraph of the proof
 (observe that $1+\frac{\eps}{3}\leq (1-\frac{\eps}{3})(1+\eps)$ for $\eps\in [0,1]$)}
  \label{f_2baldes}
\end{figure}
\end{proof}

\section{Proof of Theorem~\ref{t_main}}
\label{s:proof}

In this section we will prove our main result, modulo a set of
additional assumptions that will be proved in the next section.

Recall that we use the notation
$\bally(x,r)=\{y\in\Z^d: \|x-y\|\leq r\}$ for discrete balls. Also,
for $A\subset \Z^d$ we write $\bally(A,r)=\bigcup_{x\in A}\bally(x,r)$.

Suppose we are given sets~$A_1$ and~$A_2$
as in Theorem~\ref{t_main} and suppose
without loss of generality that the diameter of~$A_1$ is
not greater than the diameter of~$A_2$.
It is clear that we can assume
that $A_2=\Z^d \setminus \bally(A_1, s)$, since the function~$f_2$
can be seen as a
function in $\{0,1\}^{\Z^d \setminus \bally(A_1, s)}$;
so, from now on we work with this assumption.

The proof of the main theorem will require some estimates on the entrance distribution of a random walk on the sets $A_1$, $A_2$ and $A_1\cup A_2$, which are closely related to the regularity conditions mentioned above Theorem~\ref{t:expmoment}.
However, the problem is that, in general, these estimates need not
be satisfied for \emph{arbitrary} finite set~$A_1$ and
$A_2=\Z^d \setminus \bally(A_1, s)$. So, in order to fix
this problem, we will replace~$A_1$ and $A_2$
by slightly larger sets $A^{(s)}_1$ and $A^{(s)}_2$, using Proposition~\ref{p:fatA} below.
Roughly speaking, these ``fattened'' sets
will have the following properties
(below, $C$ stands for any of the three sets
$A^{(s)}_1, A^{(s)}_2,$ or $A^{(s)}_1\cup A^{(s)}_2$):
\begin{itemize}
 \item the probability that the simple random walk
enters~$C$ through some point~$y$ is at most~$O(s^{-(d-1)})$,
for starting points at distance at least of order~$s$ from~$C$;
 \item this probability should be at least of order~$s^{-(d-1)}$
for ``many'' starting points which are at distance of order~$s$ from~$y$;
 \item the probabilities of entering~$C$ through two near
points~$y$ and~$y'$ in~$\partial C$ can be different by at most a (fixed)
constant factor
(this should be valid as soon as the random walk starts far from $\{y,y'\}$);
 \item finally, we also need some additional geometric properties of $\partial C$.
\end{itemize}
A typical example of a set having these properties is a
discrete ball of radius~$s$;
in fact, we will prove that any set with
``sufficiently smooth boundary'' will do.
More rigorously, the fact that we need is formulated in the following way
(one may find helpful to look at Figure~\ref{f_smoothing}):
\newiconst{regular}
\newiconst{reg2}
\newiconst{reg8}
\newiconst{reg9}
\newiconst{reg10}
\newiconst{reg11}
\begin{proposition}
 \label{p:fatA}
 There exist positive constants
 $\useiconst{regular}\in (0,\frac{1}{10})$, $\useiconst{reg2}$,
$\useiconst{reg8}<\frac{\useiconst{regular}}{2}$,
$\useiconst{reg9},\useiconst{reg10}, \useiconst{reg11}\in (0,1),s_0$
(depending only on dimension)
such that, for any $s \geq s_0$ and any set $A\subset\Z^d$
such that $\Z^d\setminus \bally(A,s)$ is nonempty,
there is a set $A^{(s)}$ with the following properties:
 \begin{gather}
  \label{hip0}
 A \subseteq A^{(s)} \subseteq \bally\Big(A, \frac{s}{5}\Big);
\intertext{for any $y \in \partial A^{(s)}$,}
  \label{hip2}
  \sup_{\substack{x\in \Z^d;\\ \dist(x,y) \geq \useiconst{regular}s/2}}
P_x [X_{H_{A^{(s)}}} = y] \leq \useiconst{reg2} s^{-(d-1)}
\intertext{and there exists a ball $\bar B_y$ of
radius $\useiconst{regular} s$ such that $\dist(\bar B_y, y)
\in [\useiconst{regular}s, 2 \useiconst{regular}s]$
and}
  \label{hip3}
  \inf_{x\in \bar B_y} P_x [X_{H_{A^{(s)}}} = y,
H_{A^{(s)}}<H_{\Z^d\setminus
\bally(y,4\useiconst{regular} s)}]
 \geq \useiconst{reg2}^{-1} s^{-(d-1)}.
\intertext{Moreover, for any $y \in \partial A^{(s)}$}
\label{hip4}
|\{z\in \partial A^{(s)}: \|y - z\|
  \leq \useiconst{reg8}s\}| \geq \useiconst{reg9}s^{d-1}
\intertext{and if $y' \in \partial A^{(s)}$
is such that $\|y - y'\| \leq \useiconst{reg8}s$, then
there exists a set $\hat D$ (depending on $y,y'$) that separates $\{y,y'\}$ from
$\partial \bally(y,\useiconst{regular}s)$ (i.e., any nearest-neighbor path starting at
$\partial \bally(y,\useiconst{regular}s)$ that enters $A^{(s)}$ at $\{y,y'\}$, must pass through $\hat D$)
such that}
  \label{hip1}
  \sup_{\substack{x\in\hat D;\\ P_x [X_{H_{\!A^{(s)}}} = y']>0}}
\frac{P_x [X_{H_{A^{(s)}}} = y]}
{P_x [X_{H_{A^{(s)}}} = y', H_{A^{(s)}}<H_{\Z^d\setminus
\bally(y',5\useiconst{regular} s)}]}
\leq \useiconst{reg10}.
\end{gather}
\end{proposition}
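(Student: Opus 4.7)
The plan is to define $A^{(s)}$ as the union of discrete Euclidean balls of radius $r:=s/10$ centered at a well-spread subset of $A$, and then to verify each of \eqref{hip0}--\eqref{hip1} using standard potential theory for simple random walk on $\Z^d$. Concretely, let $\{x_i\}_{i\in I}$ be a maximal $\eta r$-separated subset of $A$ for a small absolute constant $\eta$ to be tuned, and set
\[
 A^{(s)} := \bigcup_{i\in I}\bally(x_i,r).
\]
By maximality every point of $A$ is within $\eta r$ of some $x_i$, whence $A\subseteq A^{(s)} \subseteq \bally(A,r)\subseteq \bally(A,s/5)$, which gives \eqref{hip0}. The essential geometric feature is that every $y\in\partial A^{(s)}$ lies on $\partial \bally(x_{i(y)},r)$ for some index $i(y)\in I$, so $A^{(s)}$ locally contains a discrete ball of radius $r$ internally tangent at $y$, with outward normal $\vec n_y := (y-x_{i(y)})/\|y-x_{i(y)}\|$.

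With this tangent-ball structure in hand, \eqref{hip2} follows from the inclusion $\bally(x_{i(y)},r)\subseteq A^{(s)}$, which yields $P_x[X_{H_{A^{(s)}}}=y]\leq P_x[X_{H_{\bally(x_{i(y)},r)}}=y]$, combined with the standard uniform bound $O(r^{-(d-1)})=O(s^{-(d-1)})$ on the discrete Poisson kernel of a ball (see Section~6.4 of~\cite{LL10}) applied for $\dist(x,y)\gtrsim s$. For \eqref{hip3} I place $\bar B_y$ on the outward ray from $x_{i(y)}$ through $y$, at distance of order $s$ from $y$, and combine a gambler's-ruin argument (to reach the vicinity of $y$ while staying inside $\bally(y,4\useiconst{regular}s)$) with the matching lower bound for the ball's entrance distribution to obtain a $\gtrsim s^{-(d-1)}$ hitting probability. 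For \eqref{hip4} I count the points of $\partial \bally(x_{i(y)},r)\cap \bally(y,\useiconst{reg8}s)$: this set has cardinality $\Theta(s^{d-1})$, and choosing $\useiconst{reg8}$ sufficiently small compared to $\eta r$ forces a positive fraction of these points to avoid the interiors of all balls $\bally(x_j,r)$ with $j\neq i(y)$, hence to belong to $\partial A^{(s)}$.

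For the comparison estimate \eqref{hip1}, I take $\hat D$ to be a discrete sphere $\partial \bally(y,\useiconst{regular}s/2)$ intersected with $\Z^d\setminus A^{(s)}$, which separates $\{y,y'\}$ from $\partial \bally(y,\useiconst{regular}s)$. On $\hat D$ the probabilities $P_x[X_{H_{A^{(s)}}}=y]$ and $P_x[X_{H_{A^{(s)}}}=y']$ are first compared to the entrance probabilities of the tangent balls $\bally(x_{i(y)},r)$ and $\bally(x_{i(y')},r)$ at $y$ and $y'$ respectively; since $\|y-y'\|\leq \useiconst{reg8}s$ and $r\gg \useiconst{regular}s$, both tangent balls appear essentially flat (and nearly coincident) at the scale $\useiconst{regular}s$, so the discrete Poisson kernels at $y$ and $y'$ seen from $x\in\hat D$ differ by at most a bounded multiplicative factor by a boundary Harnack / reflection argument. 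The main obstacle is precisely this case $i(y)\neq i(y')$, where $y$ and $y'$ sit on boundaries of two different constituent balls meeting near a ``crease'' of $\partial A^{(s)}$: one must show that at scale $\useiconst{regular}s$, which is much larger than the local mismatch between the two tangent balls, the harmonic-measure comparison still goes through. Tuning the absolute constants $\eta,\useiconst{regular},\useiconst{reg8},\dots$ so that all six assertions hold simultaneously is then a matter of bookkeeping.
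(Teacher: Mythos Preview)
Your union-of-balls construction gives every boundary point an \emph{inside} tangent ball of radius $r=s/10$, which is why \eqref{hip2} goes through. But it does not give an \emph{outside} tangent ball of comparable radius: at a ``crease'' where two of your constituent balls meet, the complement of $A^{(s)}$ has a reflex corner and admits no exterior tangent ball of any positive radius. This is precisely the structural feature the paper's construction is designed to secure, and its absence breaks two of your arguments.

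For \eqref{hip3}, you place $\bar B_y$ on the outward ray from $x_{i(y)}$ through $y$. But if $y$ sits near a crease between $\bally(x_{i(y)},r)$ and $\bally(x_j,r)$, that outward ray immediately enters $\bally(x_j,r)\subset A^{(s)}$ (differentiate the distance to $x_j$ along the ray: it is decreasing at $y$). So $\bar B_y$ as you define it need not lie in the complement, and the lower bound argument collapses. For \eqref{hip1}, you correctly flag the case $i(y)\neq i(y')$ as ``the main obstacle'', but the sentence that follows is not an argument: a boundary Harnack comparison between Poisson kernels of two \emph{different} balls, seen from points that may lie in the narrow cusp between them, is exactly the hard estimate, and you have not indicated how to prove it.

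The paper avoids this altogether by taking a different $A^{(s)}$: it mollifies (a cubified version of) $\mathbf 1_A$ and takes a generic sublevel set, which by a compactness/Sard argument is $s$-regular in the two-sided sense (Definition~\ref{d:reg}): at every boundary point there are tangent balls of radius $\Theta(s)$ both inside \emph{and outside}. The outside ball is what drives the lower bound in \eqref{hip3} (Lemma~\ref{l_entrance_measure}(ii)) and, crucially, the ratio bound \eqref{hip1} via Proposition~\ref{p_entrance}: there one decomposes $P_z[X_{H_{A^{(s)}}}=y]$ into (escape from the inside tangent ball at a nearby boundary point) $\times$ (hit $A^{(s)}$ at $y$), the first factor being $O(\dist(z,A^{(s)})/s)$ by a gambler's-ruin estimate that needs the inside ball, while the matching \emph{lower} bound for $y'$ needs the outside ball to route the walk through before entering at $y'$. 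Your construction supplies only half of this pair. If you want to salvage the union-of-balls idea, you would have to replace the missing exterior regularity by a genuine boundary-Harnack principle for Lipschitz-type domains in $\Z^d$, which is a theorem of a different order than anything you have invoked.
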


\begin{figure}
\centering \includegraphics{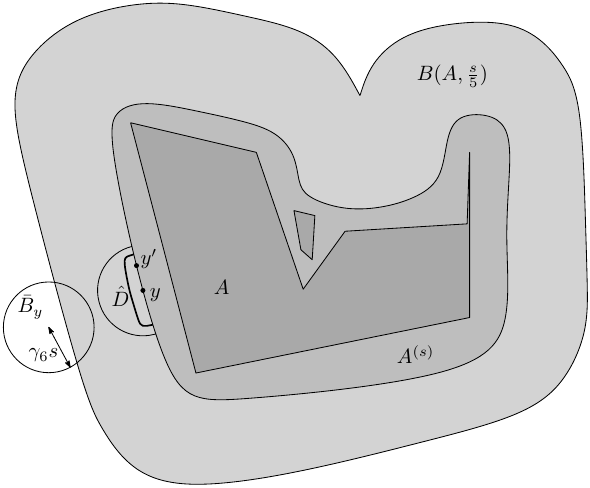}
  \caption{On the sets in Proposition~\ref{p:fatA}}
  \label{f_smoothing}
\end{figure}

The proof of this proposition is postponed to Section~\ref{s:reduction}.
We now are going to use the above result to prove Theorem~\ref{t_main}.

Recall that we define $A_2=\Z^d \setminus \bally(A_1, s)$.
The idea is to use Proposition~\ref{p:couple} for $A_1^{(s)}$
and~$A_2^{(s)}$ provided by Proposition~\ref{p:fatA},
and~$\V$ defined as
\begin{equation}
\label{df_separator}
 \V = \big\{y\in \Z^d : \dist(y,A_1^{(s)}\cup A_2^{(s)})\geq
 \useiconst{regular}s\big\}.
\end{equation}

Let $y,y'\in \partial A_1^{(s)} \cup \partial A_2^{(s)}$ be such that
$\|y - y'\| \leq \useiconst{reg8}s$ (in fact, in this case both~$y$ and~$y'$ must be in the same
set, either $\partial A_1^{(s)}$ or $\partial A_2^{(s)}$).
Let~$\hat D$ be the corresponding separating set, as in~\eqref{hip1} of
Proposition~\ref{p:fatA}.
Now, consider an arbitrary site $x \in \V$,
and write for $C=A_1^{(s)}, A_2^{(s)}, A_1^{(s)}\cup A_2^{(s)}$
\begin{equation}
 \label{decomp_hat_G}
 P_x[X_{H_C}=y]
 = \sum_{z\in \hat D} P_x[X_{H_{C\cup \hat D}}=z] P_z[X_{H_C}=y] \text{ and similarly with $y'$},
\end{equation}
where we used the strong Markov property
at $H_{C\cup \hat D}$ and dropped vanishing terms.
So, by construction, we have
\begin{equation}
\label{hip1'}
\sup_{\substack{x\in\V;\\ P_x [X_{H_C} = y']>0}}
 \frac{P_x [X_{H_C} = y]}{P_x [X_{H_C} = y']} \leq \useiconst{reg10}
\quad \text{ for } C=A_1^{(s)}, A_2^{(s)}, A_1^{(s)}\cup A_2^{(s)},
\text{ when }\|y - y'\| \leq \useiconst{reg8}s.
\end{equation}

With the above, we can now start estimating
the soft local times appearing in~\eqref{e:boundlarge}.

In the rest of this section, $C$ stands for one of the sets
$A_1^{(s)}, A_2^{(s)}, A_1^{(s)}\cup A_2^{(s)}$; we will
obtain the same estimates for all of them. Recalling the definition of $T_\ell$ in \eqref{e:Tell}, we consider $x\in \partial C$
and fix any~$z\in\Sigma_C$ such that $x=X_0(z)$; then we denote
\begin{equation}
\label{df_W}
 F^C_j(x) = G^C_{j,T^C_j}(z)
\end{equation}
to be the contribution of the $j$-th particle to
the soft local time in trajectories starting at~$x$,
in the construction of the corresponding interlacement set for~$C$,
so that $G^C_u(z)=\sum_{j=1}^{\Theta^C_u}F^C_j(x)$.

We also introduce
\begin{equation}
 \pi^C(x) = E [F^C_1(x)], \text{ which also equals
$E^{\mathcal{P}} \rho_1^C(x)$ due to \eqref{e:EGisErho},}
\end{equation}
recall the definition of $\rho^C_j$ from \eqref{df_rho}.


\newiconst{c:L41'}
\newiconst{c:L41''}
\newiconst{c:L41'''}
\begin{lemma}
\label{l_expect_Y}
For $C$ being either $A_1^{(s)}$, $A_2^{(s)}$ or $A_1^{(s)} \cup A_2^{(s)}$
and~$\V$ as in~\eqref{df_separator}, we have for all~$x\in \partial C$
\begin{itemize}
 \item[(i)] $\useiconst{c:L41'}
     s^{-1}\capacity(\V)^{-1} \leq \pi^C(x)
        \leq \useiconst{c:L41''} s^{-1}\capacity(\V)^{-1}
 \vphantom{\displaystyle\int}$;
 \item[(ii)] $E(F_1^C(x))^2\leq \useiconst{c:L41'''}
           s^{-d}\capacity(\V)^{-1}  \vphantom{\displaystyle\sum}$.
\end{itemize}
\end{lemma}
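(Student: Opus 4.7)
The plan is to derive a closed-form expression for $\pi^C(x)$ using reversibility of the simple random walk, then apply Theorem~\ref{t:2ndmoment} for the second moment, with the remaining hitting-probability estimates supplied by gambler's-ruin arguments based on Proposition~\ref{p:fatA}.

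For~(i), identity~\eqref{e:EGisErho} gives $\pi^C(x)=E^{\mathcal{P}}\rho_1^C(x)$, where the first walker starts from $\overline{e}_\V:=e_\V/\capacity(\V)$ on $\partial\V$. Since the $k$-th excursion to $C$ is preceded by a last visit to $\V$, one has the decomposition $\rho^C(x)=\sum_{n\geq 0}\1{X_n\in\V}\cdot\1{\tilde H_C\circ\theta_n<\tilde H_\V\circ\theta_n,\,X_{n+\tilde H_C\circ\theta_n}=x}$. Applying the strong Markov property at time $n$ and summing produces
\[
E_y\rho^C(x)=\sum_{w\in\partial\V}G(y,w)\,p_w,\qquad p_w:=P_w[X_{\tilde H_C}=x,\,\tilde H_C<\tilde H_\V]
\]
(here $p_w=0$ for $w\in\V\setminus\partial\V$, since such $w$ have $\tilde H_\V=1$ a.s.). Averaging $y\sim\overline e_\V$, using the classical identity $\sum_{y}e_\V(y)G(y,w)=P_w[H_\V<\infty]=1$ for $w\in\V$, and exploiting reversibility of simple random walk (which yields $\sum_w p_w=P_x[\tilde H_\V<\tilde H_C]$), one obtains the clean formula
\[
\pi^C(x)=\frac{P_x[\tilde H_\V<\tilde H_C]}{\capacity(\V)}.
\]
The bounds in~(i) then reduce to showing $P_x[\tilde H_\V<\tilde H_C]\asymp 1/s$ uniformly in $x\in\partial C$. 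The upper bound is a gambler's-ruin estimate for the walker near the smoothly bounded $C$, invoking~\eqref{hip2} together with $\capacity(A^{(s)})\asymp s^{d-2}$ and $|\partial A^{(s)}|\gtrsim s^{d-1}$ (from~\eqref{e:estimatecap} and~\eqref{hip4}). For the matching lower bound, property~\eqref{hip3} ensures that from $x$ the walker has probability at least $1/(2d)$ of stepping to some $x'\notin C$ and subsequently reaching $\V$ before re-entering~$C$ with probability $\gtrsim 1/s$ via a one-dimensional gambler's-ruin along the radial-like direction through the nearby ball~$\bar B_y$.

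For~(ii), Theorem~\ref{t:2ndmoment} applied to the excursion chain with starting distribution $\overline e_\V$ gives
\[
E(F_1^C(x))^2\leq 4\,\pi^C(x)\cdot\sup_{y'\in\partial\V}E_{y'}\rho^C(x),
\]
so, combined with~(i), it is enough to show $\sup_{y'\in\partial\V}E_{y'}\rho^C(x)\leq c\,s^{-(d-1)}$. Writing $E_{y'}\rho^C(x)=\sum_{w\in\partial\V}G(y',w)\,p_w$, the key is the sharper pointwise estimate $p_w\leq c\,s^{-d}$, obtained by a two-step decomposition: insert an intermediate set $B$ at distance $\approx\useiconst{regular}s/2$ from both $C$ and $\V$, and apply the strong Markov property at $H_B$ to get
\[
p_w\leq P_w[H_B<\tilde H_\V]\cdot\sup_{b\in B}P_b[X_{H_C}=x]\leq\frac{c}{s}\cdot\frac{c}{s^{d-1}}=\frac{c}{s^d},
\]
where $P_w[H_B<\tilde H_\V]\leq c/s$ is a gambler's-ruin estimate for a walker starting near the smoothly bounded~$\V$ and required to travel distance $\sim s$ without returning to~$\V$, and $P_b[X_{H_C}=x]\leq c/s^{d-1}$ is precisely~\eqref{hip2}. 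Combined with the geometric estimate $\sum_{w\in\partial\V}G(y',w)\leq c\,s$ (obtained by integrating the Green kernel against the $(d-1)$-dimensional regular surface $\partial\V$ of diameter $\sim s$), one gets $\sup_{y'}E_{y'}\rho^C(x)\leq c\,s^{-(d-1)}$, and the lemma follows.

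The main obstacle is carrying out the two gambler's-ruin bounds rigorously using only the regularity of $A_{1,2}^{(s)}$ and $\V$ provided by Proposition~\ref{p:fatA}, since none of these sets are literally discrete balls; the properties~\eqref{hip2}--\eqref{hip4} are precisely designed as the substitute for spherical symmetry that makes such comparisons valid.
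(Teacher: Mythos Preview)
Your identity $\pi^C(x)=P_x[\tilde H_\V<\tilde H_C]/\capacity(\V)$ is correct and is a genuinely different (and arguably cleaner) route to part~(i) than the paper's. The paper instead introduces an auxiliary sphere $\tilde\V$ of radius $3(r+s)$, uses that the interlacement excursion count is independent of which separating set one starts from to write $\pi^C(x)=\capacity(\V)^{-1}\capacity(\tilde\V)\tilde\pi^C(x)$, and then estimates $\tilde\pi^C(x)$ directly. Your reversibility formula bypasses this detour. One caveat: your sketch for the upper bound on $P_x[\tilde H_\V<\tilde H_C]$ invokes $\capacity(A^{(s)})\asymp s^{d-2}$, which is false when $r=\diam(A)\gg s$ (then $\capacity(A^{(s)})\asymp r^{d-2}$). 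The right argument is simply that $C$ contains the ball $B^{\mathsf x}_{\text{in}}$ of radius $\asymp s$ tangent near~$x$, so escaping to $\V$ forces escaping this ball by distance $\asymp s$, and Lemma~\ref{l_hit_spheres} gives $\leq c/s$; the lower bound similarly uses $B^{\mathsf x}_{\text{out}}$.

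For part~(ii) there is a real gap. You assert that $\partial\V$ has diameter $\sim s$ and hence $\sum_{w\in\partial\V}G(y',w)\leq cs$. But $\V$ is the shell $\{y:\dist(y,A_1^{(s)}\cup A_2^{(s)})\geq\useiconst{regular}s\}$ around the finite set $A_1^{(s)}$, so $\diam(\partial\V)\asymp r+s$, and integrating the Green kernel over a $(d-1)$-dimensional surface of that diameter gives $\sum_w G(y',w)\asymp r+s$. Combined with your uniform bound $p_w\leq cs^{-d}$ you only get $E_{y'}\rho^C(x)\leq c(r+s)s^{-d}$, which does not yield $cs^{-(d-1)}$ when $r\gg s$. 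To rescue this route you would need a non-uniform bound on $p_w$ that decays in $\|w-x\|$ and then control a two-kernel convolution over $\partial\V$, which is substantially more work. The paper avoids all of this with a one-line renewal inequality: for $x'$ at distance $\geq\useiconst{regular}s/2$ from $x$,
\[
E_{x'}\rho_1^C(x)\leq P_{x'}[X_{H_C}=x]+\sup_{y\in\V}P_y[H_{B(x,\useiconst{regular}s/2)}<\infty]\cdot\sup_{z}E_z\rho_1^C(x),
\]
and since the middle factor is bounded away from~$1$ uniformly, taking the sup gives $\sup E_{x'}\rho_1^C(x)\leq cs^{-(d-1)}$ directly from~\eqref{hip2}. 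This is both simpler and dimension-free in $r$.
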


\begin{proof}
Instead of estimating the expected soft local time directly,
we rather work with the
``real'' local time~$\rho_1^C(x)$, with the assistance of
Theorem~\ref{t:expectsingle}.

Consider the discrete sphere~$\tilde\V$ of radius~$3(r+s)$ centered in any fixed point of~$A_1$. Given a trajectory $w^* \in W^*$, the number of excursions $\rho_1^C(x)$ between~$\V$ and~$C$ entering at~$x$ is the same for both $s_\V(w^*)$ and $s_{\tilde\V}(w^*)$. Thus, their expected values are the same and can be written respectively as $u\capacity(\V)\pi^C(x)$ and $u\capacity(\tilde\V){\tilde\pi}^C(x)$, where ${\tilde\pi}^C(x)$ is the expected number of such $(V,x)$-crossings under $P_{\bar e_{\tilde \V}}$.
So,
\[
  \pi^C(x) = \capacity(\V)^{-1}\capacity(\tilde\V){\tilde\pi}^C(x).
\]
We know that $\capacity(\tilde\V) \asymp (r+s)^{d-2}$
(see~\eqref{e:estimatecap}), so, in order to prove the part~(i), it will be enough to obtain
 that
\begin{equation}
\label{e:piasymp}
 {\tilde\pi}^C(x) \asymp s^{-1}(r+s)^{-(d-2)}.
\end{equation}

For $x' \in \Z^d \setminus C$ such that $d(x',x) \geq \useiconst{regular} s$,
we use the Markov property at~$H_{C}$ to obtain
\begin{equation}
 E_{x'} \rho_1^C(x) \leq P_{x'}[X_{H_{C}} = x] + \sup_{y \in \V}
P_y[H_{B(x, \useiconst{regular} s/2)} < \infty] \sup_{z: d(z,x) \geq \useiconst{regular} s/2} E_{z} \rho_1^C(x).
\end{equation}
Then taking the supremum in $x'$ and using~\eqref{hip2}, we get that
\newconstant{c:p24}\newconstant{c:p24'}\newconstant{c:p24''}
\begin{equation}
\label{e:suppp}
 \sup_{x': d(x',x) \geq \useiconst{regular} s/2} E_{x'} \rho_1^C(x)
\leq \frac{\sup_{x': d(x',x) \geq \useiconst{regular} s/2}P_{x'}[X_{H_C} = x]}
{1-\sup_{y \in \V} P_y[H_{B(x, \useiconst{regular} s/2)} < \infty]} \leq \useconstant{c:p24} s^{-(d-1)}.
\end{equation}
So, by Proposition~6.4.2 of~\cite{LL10},
\begin{align*}
  E_{\bar e_{\tilde \V}}\rho_1^C(x) &\leq \sup_{x'\in{\tilde \V}}
 P_{x'}[H_{B(x, \useiconst{regular} s/2)}<\infty]
\sup_{x': d(x',x) \geq \useiconst{regular} s/2} E_{x'} \rho_1^C(x)\\
 &\leq \useconstant{c:p24'} \Big(\frac{s}{s+r}\Big)^{-(d-2)} s^{-(d-1)}
 = \useconstant{c:p24'} s^{-1}(r+s)^{-(d-2)}.
\end{align*}
We are now left with the lower bound
\[
E_{\bar e_{\tilde \V}}\rho_1^C(x) \geq \inf_{x' \in \partial \tilde \V} P_{x'}[H_{\bar B_x} < \infty] \inf_{x'' \in \partial \bar B_x} P_{x''}[X_{H_C} = x] \overset{\eqref{hip3}}\geq \useconstant{c:p24''}
\Big(\frac{s}{s+r}\Big)^{-(d-2)} s^{-(d-1)},
\]
proving~\eqref{e:piasymp} and consequently (i).

The part~(ii) then immediately follows from~\eqref{e:suppp}
and Theorem~\ref{t:2ndmoment} (see also Remark~\ref{rem4.6}).
\end{proof}

Next, we need the following large deviation bound for $F_1^C(x)$:
\newiconst{c:L42'}
\newiconst{c:L42''}
\begin{lemma}
\label{l_LD_Y}
For $C=A_1^{(s)}, A_2^{(s)},A_1^{(s)} \cup A_2^{(s)}$
and~$\V$ as in~\eqref{df_separator}, we have for all~$x\in \partial C$
\begin{equation}
\label{eq_LD_Y}
P[F_1^C(x) > v\useiconst{reg2}s^{-(d-1)}] \leq
\useiconst{c:L42'} s^{d-2}\capacity(\V)^{-1}
 \exp(-\useiconst{c:L42''}v), \text{ for any $v \geq 2$}
\end{equation}
(also, without loss of generality we suppose that $\useiconst{c:L42''}\leq 1$).
\end{lemma}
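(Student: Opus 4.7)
The plan is to apply Theorem~\ref{t:expmoment} to the excursion chain of a single trajectory, at $\hat z \in \Sigma_C$ with $X_0(\hat z) = x$. I would choose
\[
\ell := \useiconst{reg2}s^{-(d-1)},\qquad \Gamma := \{z' \in \Sigma_C : \|X_0(z') - x\| \leq \useiconst{reg8}s\}.
\]
The three hypotheses are verified as follows. First, any density $g(z,\hat z) = f^C_{X_{\rm end}(z)}(x)$ features an endpoint $X_{\rm end}(z) \in \V$ at distance at least $\useiconst{regular}s > \useiconst{regular}s/2$ from $x$, so~\eqref{hip2} gives $\ell \geq \sup_z g(z,\hat z)$. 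Second, combining the ratio bound~\eqref{hip1'} with its symmetric counterpart (obtained by exchanging the roles of $y$ and $y'$ in~\eqref{hip1}) yields a two-sided estimate $f_w(y)/f_w(y') \asymp 1$ for $w \in \V$ and close $y,y' \in \partial C$, so the constant $\alpha$ in Theorem~\ref{t:expmoment} satisfies $\alpha \geq c_1 > 0$ for some dimension-dependent $c_1$. Third,~\eqref{hip4} gives $\mu_C(\Gamma) = |B(x, \useiconst{reg8}s) \cap \partial C| \geq \useiconst{reg9}s^{d-1}$. Consequently the Poisson parameter $\tfrac{v\ell\alpha}{2}\mu_C(\Gamma) \geq c_2 v$ for some dimension-dependent $c_2 > 0$.

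I would then control the first factor $\mathbb{Q}[F_1^C(x) \geq \ell]$ in Theorem~\ref{t:expmoment} by Markov's inequality combined with Lemma~\ref{l_expect_Y}(i):
\[
\mathbb{Q}[F_1^C(x) \geq \ell] \;\leq\; \frac{\pi^C(x)}{\ell} \;\leq\; \frac{\useiconst{c:L41''}}{\useiconst{reg2}}\, s^{d-2}\capacity(\V)^{-1},
\]
which is exactly the prefactor required in~\eqref{eq_LD_Y}. The overshoot term $\exp(-(v/2-1))$ furnished by Theorem~\ref{t:expmoment} is already of the desired exponential form in $v$.

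What I expect to be the main obstacle is the uniform bound
\[
\sup_{z'} \mathbb{Q}_{z'}\big[\eta(\Gamma \times [0, c_2 v]) \leq N(\Gamma)\big] \leq C\exp(-c_3 v).
\]
I would split it as $\mathbb{Q}[\eta \leq c_2v/2] + \sup_{z'}\mathbb{Q}_{z'}[N(\Gamma) \geq c_2v/2]$. The first summand is a standard Poisson lower-tail Chernoff estimate with mean $\geq c_2 v$, giving $\exp(-c v)$. For the second, the key is that $N(\Gamma) \leq T^C - 1$ is dominated by the total number of visits of the underlying simple random walk to $\V$. Under the standing assumption that $A_2 = \Z^d \setminus \bally(A_1, s)$ has finite complement, the separator $\V$ is itself finite, so transience of SRW in $\Z^d$ ($d \geq 3$) together with a standard application of the strong Markov property at successive returns to $\V$ dominates this visit count by a geometric random variable, producing an exponential upper tail on $N(\Gamma)$ uniform in the chain's starting state $z'$. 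Collecting all three contributions and adjusting the constants yields~\eqref{eq_LD_Y}.
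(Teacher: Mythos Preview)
Your overall strategy is exactly the paper's: apply Theorem~\ref{t:expmoment} with $\ell=\useiconst{reg2}s^{-(d-1)}$ and $\Gamma=\{z':\|X_0(z')-x\|\le \useiconst{reg8}s\}$, verify $\alpha\ge c>0$ via~\eqref{hip1'} and $\mu_C(\Gamma)\ge \useiconst{reg9}s^{d-1}$ via~\eqref{hip4}, bound $\mathbb{Q}[F_1^C(x)\ge\ell]$ by Markov and Lemma~\ref{l_expect_Y}(i), and split the Poisson comparison term. All of that is correct and matches the paper.

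The gap is in your tail bound for $N(\Gamma)$. You write $N(\Gamma)\le T^C-1$ and then dominate $T^C-1$ by the number of visits of the walk to~$\V$, invoking that~$\V$ is finite. This does yield a geometric tail, but its parameter is $\min_{y\in\V}P_y[\tilde H_{\V}=\infty]$, which depends on~$\V$ and hence on $r,s$; it is \emph{not} bounded below by a dimension-only constant. When $r\gg s$, the set~$\V$ is a shell of thickness of order~$s$ around a set of diameter of order~$r$, its capacity diverges, and the escape probability from~$\V$ tends to~$0$. Your argument then produces $\exp(-c(r,s)\,v)$ rather than $\exp(-\useiconst{c:L42''}v)$ with $\useiconst{c:L42''}$ depending only on~$d$, which is what the lemma demands.

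The paper avoids this by never passing through~$T^C$. It argues directly that each increment of $N(\Gamma)$ forces the walk, from some point $y\in\V$, to hit $B(x,\useiconst{reg8}s)$. Since $\useiconst{reg8}<\useiconst{regular}/2$ and $\|y-x\|\ge\useiconst{regular}s$, one has $P_y[H_{B(x,\useiconst{reg8}s)}<\infty]\le c(\useiconst{reg8}/\useiconst{regular})^{d-2}<1$ uniformly in $y\in\V$, with~$c$ depending only on~$d$. So $N(\Gamma)$ itself is dominated by a geometric random variable with dimension-only parameter, giving the required uniform exponential rate.
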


\begin{proof}
The idea is to apply Theorem~\ref{t:expmoment} for $F_1^C(x)$
and with $\Gamma_x=\{z\in \Sigma_C: \|x - X_0(z)\|
  \leq \useiconst{reg8}s\}$; observe that
$\mu_C(\Gamma_x)\geq \useiconst{reg9}s^{d-1}$ by~\eqref{hip4}.
With the notation of Theorem~\ref{t:expmoment}, we set
\[
 \ell = \useiconst{reg2} s^{-(d-1)}\quad \text { and observe that } \quad
 \alpha \geq \frac{1}{\useiconst{reg10}},
\]
by~\eqref{hip2} and \eqref{hip1'}.

Chebyshev's inequality together with Lemma~\ref{l_expect_Y}~(i)
then imply that
\begin{equation}
\label{chegar_perto}
 P[T_\ell<\infty] = P[F_1^C(x)\geq \useiconst{reg2} s^{-(d-1)}]
 \leq \frac{\pi^C(x)}{\useiconst{reg2}s^{-(d-1)}}
 \leq \useiconst{reg2}^{-1}\useiconst{c:L41''} s^{d-2}\capacity(\V)^{-1}.
\end{equation}

Now, denoting by $N(\Gamma_x)$ the number of crossings between~$\V$
and~$C$ that enter in~$\Gamma_x$ and by~$\eta_x$ the number
of points of the Poisson process (from the construction
in Section~\ref{s:altern_constr})
in $\Gamma_x\times
\big[0,\frac{\useiconst{reg2}}
{2\useiconst{reg10}}vs^{-(d-1)}\big]$, we
write
\[
 \mathbb{Q}_{z'} \big[ \eta_x \leq N(\Gamma_x) \big]
\leq \mathbb{Q}_{z'}\Big[\eta_x\leq \frac{\useiconst{reg2}\useiconst{reg9}}
{4\useiconst{reg10}}v\Big]
+
\mathbb{Q}_{z'}\Big[N(\Gamma_x)\geq\frac{\useiconst{reg2}\useiconst{reg9}}
{4\useiconst{reg10}}v\Big].
\]
To see that both terms in the right-hand side
of the above display are exponentially small in~$v$,
we observe that
\newconstant{c:unif}
\begin{itemize}
 \item $\eta_x$ has Poisson distribution
with parameter at least $\frac{\useiconst{reg2}\useiconst{reg9}}
{2\useiconst{reg10}}v$, and
 \item starting from any~$y\in\V$, with uniformly
positive probability the random walk does not enter in~$\Gamma_x$ (recall that
$\useiconst{reg8}<\frac{\useiconst{regular}}{2}$, which implies that
$P_y[H_{\Gamma_x}<\infty]<\useconstant{c:unif}<1$ uniformly in
$y\in\V$).
Therefore~$N_x$ is dominated by a Geometric($\useconstant{c:unif}$)
random variable having exponential tail as well.
\end{itemize}
Together with~\eqref{chegar_perto} and Theorem~\ref{t:expmoment}, this finishes the proof of Lemma~\ref{l_LD_Y}.
\end{proof}

Now, we are able to finish the proof of our main result.
\begin{proof}[Proof of Theorem~\ref{t_main}.]
For $C=A_1,A_2,A_1\cup A_2$ and $x\in\partial C$,
let $\psi^x_C(\lambda)=Ee^{\lambda F_1^C(x)}$
be the moment generating function of $F_1^C(x)$.
It is elementary to obtain that
$e^t-1\leq t+t^2$ for all $t\in [0,1]$. Using this
observation, we write for
$0\leq \lambda\leq \frac{1}{2}\useiconst{reg2}^{-1}\useiconst{c:L42''}s^{d-1}$
(where $\useiconst{c:L42''}$ is from Lemma~\ref{l_LD_Y})
\newconstant{c:56'}
\newconstant{c:56''}
\newconstant{c:56'''}
\begin{align}
 \psi^x_C(\lambda) & - 1 = E(e^{\lambda F_1^C(x)}-1)
    \1{\lambda F_1^C(x)\leq 1} + E(e^{\lambda F_1^C(x)}-1)
    \1{\lambda F_1^C(x) > 1} \nonumber\\
 &\leq E\big(\lambda F_1^C(x)+\lambda^2 (F_1^C(x))^2\big)
     +Ee^{\lambda F_1^C(x)}\1{F_1^C(x) > \lambda^{-1}}
        \nonumber\\
 &\leq \lambda\pi^C(x) + \useiconst{c:L41'''} \lambda^2 s^{-d}
   \capacity(\V)^{-1} 
  + \lambda\int\limits_{\lambda^{-1}}^\infty e^{\lambda y}
      P[F_1^C(x)>y]\, dy \nonumber\\
 &\leq \lambda\pi^C(x) + \useiconst{c:L41'''} \lambda^2 s^{-d}
  \capacity(\V)^{-1} 
  + \lambda\useiconst{c:L42'}
s^{d-2}\capacity(\V)^{-1} 
\int\limits_{\lambda^{-1}}^\infty
 \exp\Big(-\frac{\useiconst{c:L42''}}{2\useiconst{reg2}}s^{d-1}y\Big)
      \, dy \nonumber\\
 &\leq \lambda\pi^C(x) + \useiconst{c:L41'''} \lambda^2 s^{-d}
 \capacity(\V)^{-1} 
 + \useconstant{c:56'}
 s^{-1}\capacity(\V)^{-1} 
\lambda \exp(-\useconstant{c:56''} \lambda^{-1}s^{d-1})
     \nonumber\\
 &\leq \lambda\pi^C(x) + \useconstant{c:56'''} \lambda^2 s^{-d}
 \capacity(\V)^{-1}, 
\label{est_psi(>0)}
\end{align}
where we used Lemma~\ref{l_expect_Y}~(ii) and Lemma~\ref{l_LD_Y}.
Analogously, since $e^{-t}-1\leq -t+t^2$ for \emph{all} $t>0$,
we obtain for $\lambda\geq 0$
\newconstant{c:57}
\begin{equation}
\label{est_psi(<0)}
\psi^x_C(-\lambda) - 1 \leq -\lambda\pi^C(x)
          + \useconstant{c:57} \lambda^2 s^{-d}
 \capacity(\V)^{-1} 
\end{equation}
(in this case we do not need the large deviation bound
of Lemma~\ref{l_LD_Y}).

Observe that, if $(Y_k, k\geq 1)$ are i.i.d.\ random variables with common
moment generating function~$\psi$ and~$N$ is an independent Poisson random
variable with parameter~$\theta$, then
$E\exp\big(\lambda\sum_{k=1}^N Y_k\big)
     =\exp\big(\theta(\psi(\lambda)-1)\big)$.
So, using \eqref{est_psi(>0)} and Lemma~\ref{l_expect_Y}~(ii),
we write for any $\delta > 0$, $z \in \Sigma$ and $x = X_0(z)$,
\newconstant{c:888}
\newconstant{c:999}
\begin{align*}
 \mathbb{Q}\big[G^C_{\hat u}
\geq (1+\delta) &
 {\hat u} \capacity(\V)
\pi^C(x) \big]
 =\mathbb{Q}\vvviiiggg[\sum_{k=1}^{\Theta_{\hat u}^C} F_k^C(x) \geq
    (1+\delta)
 {\hat u} \capacity(\V) 
\pi^C(x) \vvviiiggg]\\
 &\leq
  \frac{E\exp\vvviiiggg(\lambda\sum_{k=1}^{\Theta^C_{\hat u}}
 F_k^C(x) \vvviiiggg)}
   {\exp\big(\lambda (1+\delta)
{\hat u} \capacity(\V) 
\pi^C(x)\big)}\\
&= \exp\big(-\lambda(1+\delta)
{\hat u} \capacity(\V)   
\pi^C(x)
  + {\hat u}
 \capacity(\V) 
(\psi(\lambda)-1)\big)\\
&\leq \exp\vvviiiggg(-\big(\lambda\delta
{\hat u} \capacity(\V) 
\pi^C(x)
     -\useconstant{c:56'''}\lambda^2
{\hat u} s^{-d}\big)\vvviiiggg)\\
 &\leq
\exp\vvviiiggg(-\big(\useconstant{c:888}\lambda\delta {\hat u}s^{-1}
-\useconstant{c:56'''}\lambda^2{\hat u}s^{-d}\big)\vvviiiggg),
\end{align*}
and, analogously, with~\eqref{est_psi(<0)} instead of~\eqref{est_psi(>0)}
one can obtain
\newconstant{c:888'}
\newconstant{c:999'}
\[
 \mathbb{Q}\vvviiiggg[
G^C_{\hat u}
\leq
    (1-\delta) {\hat u} \capacity(\V) 
\pi^C(x) \vvviiiggg]
    \leq \exp\vvviiiggg(-\big(\useconstant{c:888'}\lambda\delta
{\hat u}s^{-1}-\useconstant{c:999'}\lambda^2{\hat u}s^{-d}\big)\vvviiiggg).
\]
\newconstant{c:577}
Choosing $\lambda=\useconstant{c:577}\delta s^{d-1}$,
with small enough~$\useconstant{c:577}$ depending on
$\useconstant{c:56'''},\useconstant{c:888},
\useconstant{c:888'},\useconstant{c:999'}$
(and such that $\useconstant{c:577}\leq
(\delta^{-1}\frac{\useiconst{c:L42''}}{2})\wedge\frac{\useiconst{c:L42''}}
{3\useiconst{reg2}} $), we thus
obtain using also the union bound (clearly, the cardinality of
$\partial C$ is at most $O((r+s)^d)$)
\newconstant{c:i58'}
\newconstant{c:i58''}
\begin{align}
\lefteqn{
 \mathbb{Q}\vvviiiggg[ (1-\delta){\hat u} \capacity(\V)\pi^C(x)\leq
G^C_{\hat u}
\leq (1+\delta){\hat u} \capacity(\V)\pi^C(x)\text{ for all }x\in\partial C\vvviiiggg]}
\phantom{****************************}\nonumber\\
  &\geq 1 -
\useconstant{c:i58'}(r+s)^d
 \exp\big(-\useconstant{c:i58''}\delta^2{\hat u} s^{d-2}\big).
\label{main_LD_bound}
\end{align}
 Using~\eqref{main_LD_bound} with $\delta=\frac{\eps}{3}$
and $u,(1-\eps)u, (1+\eps)u$ on the place of~$\hat u$
together with Proposition~\ref{p:couple},
we conclude the proof of Theorem~\ref{t_main}.
\end{proof}

\begin{remark}
\label{rem_boundary_size}
As mentioned in the introduction, the factor $(r+s)^d$ before
the exponential in~\eqref{eq_mainmain} can usually be reduced.
Let us observe that this factor
(times a constant) appears in the proof as an upper
bound for the cardinality of $\partial(A_1^{(s)}\cup A_2^{(s)})$.
In the typical situation when~$s$ is smaller than~$r$ and the
sets have a sufficiently regular boundary (e.g., boxes or balls),
one can substitute $(r+s)^d$ by $r^{d-1}$.
\end{remark}

\section{Connectivity decay}
\label{s:connectivity}

\begin{proof}[Proof of Theorem~\ref{t:connect}]
 We start by introducing the renormalization scheme in which the proof will be based.
 Fix $b\in (1,2]$; clearly, one can consider only this
range of the parameter~$b$ for proving~\eqref{e:connect_d3},
and any particular value of~$b\in (1,2]$ (in fact, any $b\in (0,\infty)$) will work for
proving~\eqref{e:connect_d4}.
Given $L_1 \geq 100$, we define the sequence
 \begin{equation}
  L_{k+1} =  2 \Big( 1 + \frac{1}{(k+5)^b} \Big) L_k, \text{ for $k \geq 1$}.
 \end{equation}
 Note that $L_k$ grows roughly as~$2^k$ and it need not be an integer in general. Before moving further, let us first establish some important properties on the rate of growth of this sequence. First, it is obvious that
 \begin{equation}
  \label{e:Lkbounds}
  2 L_k = L_{k+1} - \frac{2}{(k+5)^b} L_k \leq L_{k+1} - \frac{2^{k}L_1}{(k+5)^b} \leq \lfloor L_{k+1} \rfloor - \frac{2^{k-1}L_1}{(k+5)^b}.
 \end{equation}
 for all $k \geq 1$
(here we used that $\frac{50 \cdot 2^{k+1}}{(k+5)^2} > 1$ for every $k\geq 1$).
 Moreover, it is clear that
\begin{equation*}
   \log L_k  = \log L_1 + (k-1) \log 2 + \sum_{j = 1}^{k-1}
 \log (1 + \tfrac{1}{(j+5)^b})
    \leq \log L_1 + (k-1) \log 2 + \sum_{j = 1}^{k-1} \tfrac{1}{(j+5)^b},
\end{equation*}
so
\begin{equation}
  \label{e:Lkgrowth}
  L_1 2^{k-1} \leq L_k \leq e^{\zeta(b)} L_1 2^{k-1}.
\end{equation}

We use the above scale sequence to define boxes entering our re\-nor\-mal\-i\-zation scheme. For $x \in \Z^d$ and $k \geq 1$, let
 \begin{equation}
  \label{e:Ck}
  C^k_x = [0, L_k)^d \cap \Z^d + x \quad \text{and} \quad D^k_x = [-L_k,2L_k) \cap \Z^d + x.
 \end{equation}
(Observe that the $L_k$'s above need not be integers in general.)

 Given $u > u^{**}$, $k \geq 1$ and a point $x \in \Z^d$, we will be interested
in the probability of the following event
 \begin{equation}
  \label{e:Ak}
  A^k_x(u) = \big\{ C^k_x \xleftrightarrow{\mathcal{V}^u} \Z^d \setminus D^k_x \big\},
 \end{equation}
 pictured in Figure~\ref{f:renorm}.
 Our main objective is to bound the probabilities
 \begin{equation}
  \label{e:pk}
  p_k(u) = \sup_{x \in \Z^d} \mathbb{P} \big[ A^k_x(u) \big] \overset{\eqref{e:ergodic}}= \mathbb{P}[A^k_0(u)].
 \end{equation}

\begin{figure}[ht]
\centering \includegraphics{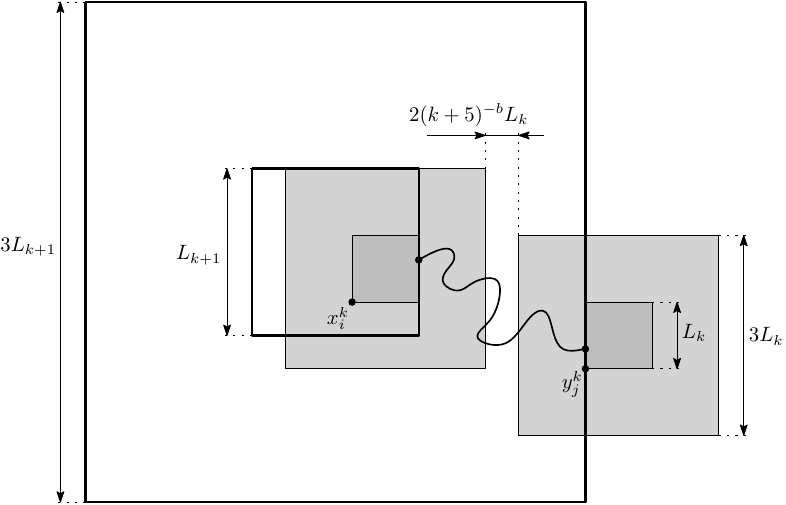}
\caption{An illustration of the event in \eqref{e:Ak} and the inclusion in \eqref{e:Aks}.
}
  \label{f:renorm}
\end{figure}

 In order to employ a renormalization scheme, we will need to relate the events $A^k$ for different scales, as done in the following observation. Given $k \geq 1$,
 \begin{equation}
  \label{e:boxes}
  \begin{array}{c}
  \text{there exist two collections of points $\{x^k_i\}_{i = 1}^{3^d}$ and $\{y^k_j\}_{j = 1}^{2d\cdot 7^{d-1}}$ such that}\\
\begin{array}{l}
  i)  \phantom{ii)} C^{k+1}_0 = \bigcup_{i=1}^{3^d} C^k_{x^k_i},\\
  ii) \phantom{i)}  \bigcup_{j=1}^{2d\cdot 7^{d-1}} C^k_{y^k_j}
\text{ is disjoint from $D^{k+1}_0$
and contains $\partial(\Z^d\setminus D^{k+1}_{0})$,}
\end{array}
  \end{array}
 \end{equation}
 see Figure~\ref{f:renorm}. The above statement is a consequence of \eqref{e:Lkbounds} and the fact that for all $k\geq 1$
we have that $2(1+\frac{1}{(k+5)^b})<3$
and $6(1+\frac{1}{(k+5)^b})<7$. It implies that
 \begin{equation}
  \label{e:Aks}
  A^{k+1}_0 \subset {\textstyle \bigcup
\limits_{\substack{i \leq 3^d\\j \leq 2d\cdot 7^{d-1}}}} A^k_{x^k_i} \cap A^k_{y^k_j},
 \end{equation}
see Figure~\ref{f:renorm}.

It is also important to observe from \eqref{e:boxes} that for any
$i \leq 3^d$ and $j \leq 2d\cdot 7^{d-1}$,
\begin{equation}
\label{bound_dist}
 d \big(D^k_{x^k_i}, D^k_{y^k_j} \big) \geq \lfloor L_{k+1} \rfloor
- 2L_k \overset{\eqref{e:Lkbounds}}\geq \frac{2^{k-1} L_1}{(k+5)^b}.
\end{equation}

Abbreviate $\hat{u}=\frac{u^{**}+u}{2}$; since $u>u^{**}$, we have
$u>\hat{u}>u^{**}$. Then, choose a sufficiently small~$\eps>0$
in such a way that
\[
 \prod_{k=1}^{\infty}\Big(1-\frac{\eps}{k^b}\Big) > \frac{\hat{u}}{u}
\]
and define
\[
 u_k = \frac{\hat{u}}{\prod_{j=1}^{k-1}(1-\eps k^{-b})};
\]
by construction, it holds that $u_k<u$ for all~$k$.
Abbreviate $\kappa_d=\frac{2d\cdot 21^d}{7}$ and let us denote
\begin{equation}
\label{eq_liminf}
 \varrho_d = \liminf_{L\to\infty}\mathbb{P} \big[ [0,L]^d
 \xleftrightarrow{\mathcal{V}^{\hat{u}}} \partial [-L,2L]^d \big];
\end{equation}
then, it holds (recall~\eqref{e:ustarstar3}) that $0 \leq \varrho_d < \kappa_d^{-1}$. The above event stands for the fact that there exists a connecting path between these two sets through $\mathcal{V}^{\hat{u}}$.

Now, we obtain a recursive relation for $p_k(u_k)$, recall~\eqref{e:pk}.

We use~\eqref{main_decr} with $r=3\sqrt{d}L_k$,
$s \geq 2^{k-1}L_1(k+5)^{-b}$
(recall~\eqref{bound_dist}), $u_{k+1}$ on the place of~$u$ and
$\eps k^{-b}$ on the place of~$\eps$ (observe that
$u_k=(1-\eps k^{-b})u_{k+1}$), and use also~\eqref{e:Lkgrowth} and \eqref{e:Aks} to obtain that
\newconstant{c:rec1}
\newconstant{c:rec2}
\begin{equation}
\label{e:recursion}
 p_{k+1}(u_{k+1}) \leq \kappa_d p_k^2(u_k)
     + \useconstant{c:rec1}2^{kd}L_1^d\exp\Big(-\useconstant{c:rec2}
        k^{-2b}\Big(\frac{L_12^k}{(k+5)^b}\Big)^{d-2}\Big),
\end{equation}
where $\useconstant{c:rec2} = \useconstant{c:rec2}(u,b,\eps)$.

Now, let us first consider the case $d\geq 4$ (as mentioned above,
for this case any particular value of $b\in (1,2]$ will do the job,
so in the calculations below
one can assume for definiteness that e.g.\ $b=2$). Let~$h_1>0$ be such that
$\varrho_d < e^{-h_1} < \kappa_d^{-1}$. Choose a sufficiently large
$L_1\geq 100$ in such a way that $p_1(\hat{u}) < e^{-h_1}$ and
\begin{equation}
\label{big_cond_L1}
  \useconstant{c:rec1}2^{kd}L_1^d\exp\Big(-\useconstant{c:rec2}
        k^{-2b}\Big(\frac{L_12^k}{(k+5)^b}\Big)^{d-2}+h_1+ 2^{k+1}\Big)
   < 1-\kappa_d e^{-h_1}
\end{equation}
for all $k\geq 1$ (here we used $d \geq 4$).
Then, we can find small enough~$h_2\in (0,1)$ in such a way that
\begin{equation}
\label{cond_p1_d4}
 p_1(\hat{u})\leq \exp(-h_1-2h_2).
\end{equation}

We then prove by induction that
\begin{equation}
\label{e:induction_to_prove}
 p_k(u_k) \leq \exp(-h_1-h_2 2^{k}).
\end{equation}
Indeed, the base for the induction is provided by~\eqref{cond_p1_d4};
then, we have by~\eqref{e:recursion} that
\[
p_{k+1}(u_{k+1}) \leq \kappa_d \exp(-2h_1-h_2 2^{k+1}) +
\useconstant{c:rec1}2^{kd}L_1^d\exp\Big(-\useconstant{c:rec2}
        k^{-2b}\Big(\frac{L_12^k}{(k+5)^b}\Big)^{d-2}\Big)
\]
so, by~\eqref{big_cond_L1} (recall that $h_2<1$)
\[
 \frac{p_{k+1}(u_{k+1})}{\exp(-h_1-h_2 2^{k+1})}
 \leq \kappa_d e^{-h_1} +
\useconstant{c:rec1}2^{kd}L_1^d\exp\Big(-\useconstant{c:rec2}
        k^{-2b}\Big(\frac{L_1 2^k}{(k+5)^b}\Big)^{d-2}
                   +h_1+ h_2 2^{k+1}\Big),
\]
which is smaller than one, thus proving~\eqref{e:induction_to_prove}.

Observe that for all~$x$
it holds that
\begin{equation}
\label{annulus_path}
\mathbb{P} \big[ 0 \xleftrightarrow{\mathcal{V}^u}x \big] \leq
\mathbb{P} \big[ [-L_k,L_k]^d \xleftrightarrow{\mathcal{V}^u}
\partial [-2L_k,2L_k]^d \big]
\end{equation}
with $k=\max\{m:\frac{3}{2}L_m < \|x\|\}$;
also, $L_k=O(2^k)$ by~\eqref{e:Lkgrowth}.
Since $u_k<u$ for all~$k$,
\eqref{e:induction_to_prove} implies that $p_k(u) \leq \exp(-h_1-h_2 2^{k})$,
we obtain~\eqref{e:connect_d4} from~\eqref{annulus_path}.

Let us now treat the case~$d=3$. Again, let~$h'_1>0$ be such that
$\varrho_3 < e^{-h'_1} < \kappa_3^{-1}$. Choose a sufficiently large
$L_1\geq 100$ in such a way that $p_1(\hat{u}) < e^{-h'_1}$ and
\begin{equation}
\label{big_cond_L1_d=3}
  \useconstant{c:rec1}2^{3k}L_1^3\exp\big(-\useconstant{c:rec2}
        (k+5)^{-3b}L_1 2^{k-1}+h_1\big)
   < 1-\kappa_3 e^{-h'_1}
\end{equation}
for all $k\geq 1$.
Then, we can find small
enough~$h'_2\in \big(0,\frac{1}{4}\useconstant{c:rec2}\big)$
 in such a way that
\begin{equation}
\label{cond_p1_d3}
 p_1(\hat{u})\leq \exp(-h'_1-2h'_2).
\end{equation}

Now, in three dimensions we are going to
 prove by induction that
\begin{equation}
\label{e:induction_to_prove_d=3}
 p_k(u_k) \leq \exp(-h'_1-h'_2 (k+5)^{-3b} 2^{k}).
\end{equation}
Indeed, we have by~\eqref{e:recursion} that
\[
p_{k+1}(u_{k+1}) \leq \kappa_3 \exp(-2h'_1-h'_2 (k+5)^{-3b}2^{k+1}) +
\useconstant{c:rec1}2^{3k}L_1^3\exp\big(-\useconstant{c:rec2}
        (k+5)^{-3b}L_12^k\big)
\]
so, by~\eqref{big_cond_L1_d=3}
(recall that $h_2<\frac{1}{4}\useconstant{c:rec2}$)
\begin{align*}
 \frac{p_{k+1}(u_{k+1})}{\exp(-h'_1-h'_2 (k+6)^{-3b} 2^{k+1})}
 & \leq \kappa_3 e^{-h'_1} \exp\big(-h'_2((k+5)^{-3b}-(k+6)^{-3b})\big) \\
 & \quad {}+
\useconstant{c:rec1}2^{3k}L_1^3\exp\Big(-\useconstant{c:rec2} \frac{L_12^k}{(k+5)^{3b}} +h'_1+ h'_2
  \frac{2^{k+1}}{(k+6)^{3b}} \Big)\\
 & \leq\kappa_3 e^{-h'_1} + \useconstant{c:rec1}2^{3k}L_1^3\exp\big(-\useconstant{c:rec2}
        (k+5)^{-3b}L_12^{k-1}+h'_1\big)\\
 & < 1,
\end{align*}
thus proving~\eqref{e:induction_to_prove_d=3}.
Again, since $u_k<u$ for all~$k$, \eqref{e:induction_to_prove} implies that $p_k(u) \leq \exp(-h'_1-h'_2 (k+5)^{-3b}2^{k})$ for all~$k$, and then we obtain~\eqref{e:connect_d3} with the help of~\eqref{annulus_path} analogously to the case $d\geq 4$.
This concludes the proof of Theorem~\ref{t:connect}.
\end{proof}

\section{Smoothing of discrete sets: proof of Proposition~\ref{p:fatA}}
\label{s:reduction}
In this section we show that any set can be enclosed in
a slightly larger set
with ``smooth enough'' boundaries, and this larger set
has the desired properties (in particular, the
entrance probabilities behave in a good way), as
described in Proposition~\ref{p:fatA}.

To facilitate reading, throughout this section we will
adopt the following convention for denoting
 points and subsets of $\R^d$ which
are not (generally) in~$\Z^d$: they
will be respectively denoted by $\mathsf{x}, \mathsf{y}, \mathsf{z}$ and $\mathsf{A}, \mathsf{B}, \mathsf{D}$, using the \textsf{sans serif} font.
 The usual fonts are reserved to points and subsets of~$\Z^d$.
Also, we use the following (a bit loose but convenient) notation:
if a set $\mathsf{A} \subset \R^d$ was defined, then we denote
by~$A\subset \Z^d$ its
discretization: $A = \mathsf{A} \cap \Z^d$;
conversely, if $A\subset\Z^d$ was a discrete set, then $\mathsf{A}$ just
equals~$A$, but is regarded as a subset of~$\R^d$.

Similarly to the notations in the discrete case, let us write
$\mathsf{B}(\mathsf{x},s)=\{\mathsf{y} \in \R^d: \|\mathsf{x}-\mathsf{y}\|\leq s\}$ for the balls
with radius~$s$, recall that $\|\cdot\|$ stands for the Euclidean norm.
We abbreviate $\mathsf{B}(s) = \mathsf{B}(0,s)$.
It will be convenient to define, for $\mathsf{A} \subseteq \R^d$,
the ball $\mathsf{B}(\mathsf{A}, s) = \bigcup_{\mathsf{x} \in \mathsf{A}} \mathsf{B}(\mathsf{x}, s)$.

\begin{definition}
\label{d:reg}
Let $\mathsf{D} \subset \R^d$ be an open set (not necessarily connected)
with smooth boundary $\partial \mathsf{D}$.
We say that $\mathsf{D}$ is \emph{$s$-regular} if
for any $\mathsf{x} \in \partial \mathsf{D}$ there exist
two balls $\mathsf{B}^\mathsf{x}_{\text{in}} \subset \bar{\mathsf{D}}$ and
$\mathsf{B}^\mathsf{x}_{\text{out}} \subset \R^d \setminus \mathsf{D}$ of radius $s$, such that $\partial \mathsf{D} \cap \mathsf{B}^\mathsf{x}_{\text{out}} = \partial \mathsf{D} \cap
\mathsf{B}^\mathsf{x}_{\text{in}}=\{\mathsf{x}\}$.
Informally speaking, the above definition means that one can touch the boundary
of~$\mathsf{D}$ by spheres of radius~$s$ from inside and outside.
We also adopt the convention that~$\R^d$ is $s$-regular
for any~$s>0$.
\end{definition}

Observe that if $\mathsf{D}$ is an $s$-regular set ,
then for each $\mathsf{x} \in \partial \mathsf{D}$
the balls $\mathsf{B}^\mathsf{x}_{\text{in}}$ and $\mathsf{B}^\mathsf{x}_{\text{out}}$ are unique. Let us denote by $\mathsf{x}^\text{in}$ and $\mathsf{x}^\text{out}$ their respective centers, which lie in the line normal to $\partial \mathsf{D}$ at $\mathsf{x}$.
Also, it is important to keep in mind that if~$\mathsf{D}$
is $s$-regular then it is also $s'$-regular for all $s'\leq s$.


First, we will show that any set $\mathsf{A} \subset \R^d$ can be thickened into a smooth and regular~$\mathsf{A}^{(s)}$ which is
``close'' to~$\mathsf{A}$, see Figure~\ref{f_smoothing}.
This is made precise in the following

\begin{lemma}
\label{l_separate}
There exists a constant \newiconst{c:reg} $\useiconst{c:reg} \in (0,1/5)$ such that, for any set  $\mathsf{A} \subset \R^d$ and $s > 0$, there exist a set $\mathsf{A}^{(s)} \subset \R^d$ with smooth boundary, such that
\begin{enumerate}
\item $\mathsf{A} \subseteq \mathsf{A}^{(s)} \subseteq
\mathsf{B}\big(\mathsf{A},\frac{s}{5}\big)$ and
\item $\mathsf{A}^{(s)}$ is $\useiconst{c:reg} s$-regular in the sense of
Definition~\ref{d:reg}.
\end{enumerate}
\end{lemma}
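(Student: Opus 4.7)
The plan is to obtain $\mathsf{A}^{(s)}$ by a morphological smoothing of $\mathsf{A}$. A natural first attempt, using the closing of the $\rho$-tube of $\mathsf{A}$ with $\rho := s/10$, is
\[
   \mathsf{A}^{(s)} := \mathrm{int}\, E_{\rho}\bigl(\mathsf{B}(\mathsf{A}, 2\rho)\bigr),
   \qquad E_{\rho}(\mathsf{Y}) := \{\mathsf{y}\in \R^d : \mathsf{B}(\mathsf{y},\rho)\subseteq \mathsf{Y}\},
\]
that is, the interior of the Minkowski erosion of the $2\rho$-tube by the closed $\rho$-ball (equivalently, the morphological closing of $\mathsf{B}(\mathsf{A},\rho)$ by $\mathsf{B}(\rho)$). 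The initial $\rho$-thickening rounds off sharp outward features of $\mathsf{A}$, and the subsequent closing smooths out any remaining outer concavities at the same scale $\rho$. The final regularity constant will be $\useiconst{c:reg} = 1/10$, possibly shrunk by a universal factor to cover the smoothing steps below.

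The containments $\mathsf{A}\subseteq \mathsf{A}^{(s)}\subseteq \mathsf{B}(\mathsf{A}, s/5)$ are routine: for each $\mathsf{a}\in \mathsf{A}$ and small $\epsilon\in(0,\rho)$ every point of $\mathsf{B}(\mathsf{a},\rho+\epsilon)$ is within $2\rho$ of $\mathsf{A}$, so $\mathsf{a}$ lies in the interior of the erosion; while $E_\rho(\mathsf{B}(\mathsf{A},2\rho))\subseteq \mathsf{B}(\mathsf{A},2\rho)=\mathsf{B}(\mathsf{A},s/5)$ since erosion only shrinks sets. The \emph{outer tangent ball of radius $\rho$} at every boundary point follows cleanly from the opening/closing duality $(\mathsf{A}^{(s)})^c = (\mathsf{B}(\mathsf{A},\rho))^c \circ \mathsf{B}(\rho)$, with $\circ$ denoting morphological opening. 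The complement of $\mathsf{A}^{(s)}$ is thus a union of closed $\rho$-balls lying entirely outside the $\rho$-tube of $\mathsf{A}$, and any such opening admits an inner tangent ball at each of its boundary points (otherwise every constituent ball would contain the point in its interior, making it interior to the opening); this inner tangent ball of $(\mathsf{A}^{(s)})^c$ is precisely the desired outer tangent ball $\mathsf{B}^\mathsf{y}_{\mathrm{out}}$ of $\mathsf{A}^{(s)}$.

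The main technical obstacle is the \emph{inner tangent ball of radius $\useiconst{c:reg} s$}: the naive closing construction above can develop ``thin bridge'' regions---for instance when $\mathsf{A}$ consists of two points at a distance just below $4\rho$, $\mathsf{A}^{(s)}$ becomes a narrow neck at the midpoint whose inner rolling radius is much smaller than $\rho$. To force a uniform inner tangent ball at some radius $\useiconst{c:reg} s$ valid for \emph{every} $\mathsf{A}$, the construction has to be refined: one can either iterate the morphological operations (applying a further opening of $\mathsf{A}^{(s)}$ by a smaller ball to cap off such bridges), or take a regular sublevel set of a mollified signed distance function with bounded principal curvatures. Either refinement, with smoothing scales chosen as fixed fractions of $s$, yields a dimension-dependent $\useiconst{c:reg}\in(0,1/5)$ for which both rolling ball properties hold. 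Finally, the two-sided rolling ball condition only gives $\partial \mathsf{A}^{(s)}$ $C^{1,1}$-smoothness, whereas Definition~\ref{d:reg} asks for genuine smoothness; this is remedied by convolving $\mathbf{1}_{\mathsf{A}^{(s)}}$ with a smooth mollifier at scale $\delta\ll \useiconst{c:reg} s$ and selecting a nearby regular level set via Sard's theorem, shrinking the rolling radii by at most $O(\delta)$ and absorbed into the final constant.
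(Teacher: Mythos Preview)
You correctly establish the outer tangent ball of radius $\rho=s/10$ via the opening--closing duality, and you correctly diagnose that the closing construction fails to produce an \emph{inner} tangent ball of any uniform radius. The gap is that neither of your two proposed repairs is actually carried out, and the second one runs into a concrete obstruction. For the mollified distance $f_\delta = d(\cdot,\mathsf{A})*\psi_\delta$, bounding the curvature of a level set needs not only $|D^2 f_\delta|\le C/\delta$ (which does follow from $1$-Lipschitzness) but also a lower bound on $|\nabla f_\delta|$ along that level set, \emph{uniform in $\mathsf{A}$}. This is false: if $\mathsf{A}$ is the sphere of radius $\rho + c_\psi\delta$ about the origin, where $c_\psi=\int |\mathsf{y}|\psi(\mathsf{y})\,d\mathsf{y}$, then by rotational symmetry the origin is a critical point of $f_\delta$ with critical value exactly $\rho$, so $\rho$ is not a regular value and the level set is not even a manifold there. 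Invoking Sard lets you perturb to a nearby regular value for this particular $\mathsf{A}$, but gives neither a single level nor a single gradient lower bound valid for \emph{all} $\mathsf{A}$, which is what a uniform $\useiconst{c:reg}$ demands. The same uniformity issue recurs in your final mollification step. As for the iterated open/close suggestion, a further opening of the closing by $\mathsf{B}(r)$ does yield an inner rolling ball of radius $r$, but you give no argument that the outer rolling ball of radius $\rho$ survives the opening, nor any scheme for choosing the scales so that both bounds hold with constants independent of $\mathsf{A}$.

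The paper's proof resolves exactly this uniformity problem by a device absent from your sketch. After rescaling to $s=5$, it first replaces $\mathsf{A}$ by the union $\hat{\mathsf{A}}$ of all cubes from a fixed grid of mesh $\tfrac{1}{8\sqrt d}$ that meet $\mathsf{A}$, together with their immediate neighbours, and only then mollifies $\mathbf{1}_{\hat{\mathsf{A}}}$ at a comparable fixed scale. The crucial observation is that the restriction of the mollified function to any unit ball depends only on the local pattern of occupied cubes, of which there are \emph{finitely many} up to translation, independently of $\mathsf{A}$. Sard's theorem applied to this finite family of smooth functions produces a single $\alpha_0\in(0,1)$ that is a regular value for all of them simultaneously; the superlevel set $\mathsf{A}^{(s)}=\{\hat f>\alpha_0\}$ is then smooth, and its two-sided rolling radius is the minimum over finitely many smooth local models, hence a positive constant depending only on~$d$. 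Discretising before mollifying is precisely what converts the uniformity-over-$\mathsf{A}$ problem into a finite one.
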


\begin{proof}
Assume that $\R^d\setminus\mathsf{B}\big(\mathsf{A},\frac{s}{5}\big)$
is nonempty, otherwise the claim is straightforward.
Since we suppose that $\mathsf{A} \subset \R^d$ is arbitrary, we can suppose that $s = 5$ (so that $\frac{s}{5}$=1)
by scaling~$\mathsf{A}$ if necessary.

Let us first tile the space $\R^d$ with compact cubes $\mathsf{K}_m$, of side length $\frac{1}{8 \sqrt{d}}$. More precisely, for $m = (m_1, \dots, m_d) \in \Z^d$, let
\begin{equation}
 \label{e:tile}
 \mathsf{K}_m = \tfrac{1}{8 \sqrt{d}} [m_1, m_1 + 1] \times \dots \times [m_d, m_d + 1].
\end{equation}
With the above definition, $\diam(\mathsf{K}_{m_1} \cup \mathsf{K}_{m_2})\leq\frac{1}{4}$ if
$\mathsf{K}_{m_1}$ and $\mathsf{K}_{m_2}$ have at least one common point.

We first consider the set
\[
 \hat{\mathsf{A}} = \bigcup \mathsf{K}_m,
\]
where the above union is taken over all cubes that either intersect $\mathsf{A}$
or have at least one common point with another cube that intersects $\mathsf{A}$.

Define now the function $\hat f$ to be the convolution of $\1{\hat{\mathsf{A}}}(\cdot)$ with a smooth test function~$\psi \geq 0$, with $\int\psi\, d\mathsf{x} = 1$ and supported
on $\mathsf{B}(\frac{1}{8})$. Clearly, for any $\alpha \in(0,1)$ it holds that
\begin{equation}
 \label{e:Acontained}
 \mathsf{A} \subseteq \{\mathsf{x}: \hat f(\mathsf{x}) > \alpha\},
\end{equation}
so it remains
to show that, for some $\alpha$, the set
$\{\mathsf{x}: \hat f(\mathsf{x}) > \alpha\}$ is $\useiconst{c:reg}$-regular for some
small enough $\useiconst{c:reg}<\frac{1}{5}$ independent of $\mathsf{A}$.

To understand how the above construction depends on the choice of $\mathsf{A}$,
let us scale and recenter the function $\hat f$. More precisely, let $\varphi_{\mathsf{A},m}:\mathsf{B}(0,1) \to \R_+$ 
be the function that associates a point $\mathsf{x} \in \mathsf{B}(0,1)$ 
to $\hat f(\mathsf{x} - m)$. It is important to observe that
\begin{equation}
 \label{e:finitef}
 \begin{array}{c}
   \text{as we vary $\mathsf{A} \subset \Z^d$, 
and $m \in \Z^d$, the functions}\\
   \text{$\varphi_{\mathsf{A},m}$ range over a finite collection of smooth functions,}
 \end{array}
\end{equation}
since it is determined by the finitely many possible configurations of boxes $K_{m'}$ that intersect~$\mathsf{K}_m$ (whether they appear or not in the union defining~$\hat{\mathsf{A}}$).

From the Sard's theorem and the implicit function theorem
one can obtain that
for some $\alpha \in (0,1)$ (in fact, for generic values of $\alpha \in (0,1)$)
the boundary $\{\mathsf{x}: \hat f(\mathsf{x}) = \alpha\}$ is smooth.
Therefore, using \eqref{e:finitef} we can choose $\alpha_o \in (0,1)$ such that $\{\mathsf{x}: \hat f(\mathsf{x}) = \alpha_o\}$ is smooth, independently of the choice of $\mathsf{A}$. We now let $\mathsf{A}' = \{\mathsf{x}: \hat f(\mathsf{x}) > \alpha_o\}$. From \eqref{e:Acontained}, we conclude that $\mathsf{A} \subset \mathsf{A}'$ and from the definition of $\hat f$, we obtain that $\mathsf{A}' \subseteq \mathsf{B}(\mathsf{A}, 1)$. To finish the proof, we should show that $\mathsf{A}'$ is $\useiconst{c:reg}$-regular (with some small
enough constant~$\useiconst{c:reg}$ independent of $\mathsf{A}$).

Since $\partial \mathsf{A}'$ is smooth, we can show that for every $\mathsf{x} \in \partial \mathsf{A}'$, there exist $\mathsf{B}_{\text{in}}$ and $\mathsf{B}_{\text{out}}$ as in Definition~\ref{d:reg}.
Observe that the existence of such balls with radius smaller or equal to $1/4$ only depends on the values of $\hat f$ in $\mathsf{B}(\mathsf{x},1)$. So that the independence of $\useiconst{c:reg}$ of the choice of $\mathsf{A}$ follows from \eqref{e:finitef}.
\end{proof}


At this point,
we can collect the first ingredient for Proposition~\ref{p:fatA}:
we take~$A^{(s)}$ to be the discretization of the set $\mathsf{A}^{(s)}$
 provided by Lemma~\ref{l_separate}.

Now, we prove several geometric properties of regular
sets and their discretizations.
\newiconst{g:til}
\newiconst{g:1/6}
\begin{lemma}
\label{l:reg_spheres}
Abbreviate $\useiconst{g:til}=\frac{1}{200}$
and~$\useiconst{g:1/6}=\frac{1+\sqrt{799}}{200}<\frac{1}{6}$.
Then for any $s$-regular set $\mathsf{A}$ and for any
$\mathsf{v}_1,\mathsf{v}_2\in\partial\mathsf{A}$ such that
$\|\mathsf{v}_1-\mathsf{v}_2\|\leq \useiconst{g:til}s$ it holds that
\begin{equation}
\label{eq:closecenters}
 \|\mathsf{v}_1^\text{\rm out}-\mathsf{v}_2^\text{\rm out}\|
 \leq \useiconst{g:1/6}s
\end{equation}
(observe that, by symmetry, the same holds for
$\mathsf{v}_1^{\text{\rm in}},\mathsf{v}_2^{\text{\rm in}}$).
\end{lemma}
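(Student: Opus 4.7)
The plan is to work entirely in $\R^d$ with the displacement and outward normal vectors
\[
\mathbf{d}=\mathsf{v}_2-\mathsf{v}_1,\qquad \mathbf{n}_i=\mathsf{v}_i^{\mathrm{out}}-\mathsf{v}_i\quad(i=1,2),
\]
so that $\|\mathbf{n}_i\|=s$, $\|\mathbf{d}\|\leq \useiconst{g:til}s$ (reading the ``$r$'' in the statement as a typo for $s$), and the target quantity is $\|\mathsf{v}_1^{\mathrm{out}}-\mathsf{v}_2^{\mathrm{out}}\|=\|(\mathbf{n}_1-\mathbf{n}_2)+\mathbf{d}\|$. A key identity I would use is $\mathsf{v}_i^{\mathrm{in}}=2\mathsf{v}_i-\mathsf{v}_i^{\mathrm{out}}$: both $\mathsf{B}^{\mathsf{v}_i}_{\mathrm{in}}$ and $\mathsf{B}^{\mathsf{v}_i}_{\mathrm{out}}$ have radius $s$ and lie on opposite sides of $\partial\mathsf{A}$ at $\mathsf{v}_i$, and each has $\mathsf{v}_i$ on its boundary (a neighborhood of $\mathsf{v}_i$ cannot be contained in either ball, since $\mathsf{v}_i\in\partial\mathsf{A}$).

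The central geometric observation is that the open balls $\operatorname{int}\mathsf{B}^{\mathsf{v}_1}_{\mathrm{out}}$ and $\operatorname{int}\mathsf{B}^{\mathsf{v}_2}_{\mathrm{in}}$ are disjoint: the first is contained in $\R^d\setminus\bar{\mathsf{A}}$ (since $\partial\mathsf{A}\cap\mathsf{B}^{\mathsf{v}_1}_{\mathrm{out}}=\{\mathsf{v}_1\}$ with $\mathsf{v}_1$ on the boundary of the ball), while the second is contained in $\mathsf{A}$ by the analogous argument. Two disjoint open balls of radius $s$ have centers at distance $\geq 2s$, so $\|\mathsf{v}_1^{\mathrm{out}}-\mathsf{v}_2^{\mathrm{in}}\|\geq 2s$; exchanging indices also gives $\|\mathsf{v}_2^{\mathrm{out}}-\mathsf{v}_1^{\mathrm{in}}\|\geq 2s$. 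Substituting the reflection identity, these become
\[
\|\mathbf{n}_1+\mathbf{n}_2-\mathbf{d}\|\geq 2s \quad\text{and}\quad \|\mathbf{n}_1+\mathbf{n}_2+\mathbf{d}\|\geq 2s.
\]

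Summing the squares and invoking the parallelogram identity,
\[
2\|\mathbf{n}_1+\mathbf{n}_2\|^2+2\|\mathbf{d}\|^2=\|\mathbf{n}_1+\mathbf{n}_2+\mathbf{d}\|^2+\|\mathbf{n}_1+\mathbf{n}_2-\mathbf{d}\|^2\geq 8s^2,
\]
so $\|\mathbf{n}_1+\mathbf{n}_2\|^2\geq 4s^2-\|\mathbf{d}\|^2$. Since $\|\mathbf{n}_i\|=s$, this gives $\mathbf{n}_1\cdot\mathbf{n}_2\geq s^2-\tfrac12\|\mathbf{d}\|^2$, and hence $\|\mathbf{n}_1-\mathbf{n}_2\|^2=2s^2-2\mathbf{n}_1\cdot\mathbf{n}_2\leq\|\mathbf{d}\|^2$. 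The triangle inequality then yields
\[
\|\mathsf{v}_1^{\mathrm{out}}-\mathsf{v}_2^{\mathrm{out}}\|\leq\|\mathbf{n}_1-\mathbf{n}_2\|+\|\mathbf{d}\|\leq 2\|\mathbf{d}\|\leq\tfrac{s}{100},
\]
which is comfortably below $\useiconst{g:1/6}s$ since $2/200<(1+\sqrt{799})/200$.

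Conceptually the proof uses only the mutual disjointness of an inward ball at one point with an outward ball at the other; the only care needed is to distinguish open from closed balls (so that ``disjoint'' translates to ``centers at distance $\geq 2s$'') and to substitute the reflection identity correctly. I do not anticipate any serious obstacle, and in particular the hypothesis $\|\mathbf{d}\|\leq\useiconst{g:til}s$ enters only at the very last step to produce the explicit numerical bound.
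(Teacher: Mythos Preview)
Your proof is correct and takes a genuinely different route from the paper. The paper fixes a $2$-plane through $\mathsf{v}_1$, $\mathsf{v}_1^{\mathrm{out}}$ and the translate $\mathsf{v}_2^{\mathrm{out}}+(\mathsf{v}_1-\mathsf{v}_2)$, locates the intersection of two circles there, and argues that a certain arc midpoint cannot be too deep inside $\mathsf{B}^{\mathsf{v}_1}_{\mathrm{in}}$ (otherwise its translate back by $\mathsf{v}_2-\mathsf{v}_1$ would lie in both $\mathsf{B}^{\mathsf{v}_2}_{\mathrm{out}}$ and $\mathsf{B}^{\mathsf{v}_1}_{\mathrm{in}}$). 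This yields the bound $\|\mathsf{v}_1^{\mathrm{out}}-\mathsf{v}_2^{\mathrm{out}}\|\leq\bigl(\useiconst{g:til}+2\sqrt{\useiconst{g:til}-\useiconst{g:til}^2/4}\,\bigr)s$. You instead exploit the disjointness of $\mathrm{int}\,\mathsf{B}^{\mathsf{v}_1}_{\mathrm{out}}$ and $\mathrm{int}\,\mathsf{B}^{\mathsf{v}_2}_{\mathrm{in}}$ (and the symmetric pair) directly, combine the two distance constraints via the parallelogram law, and obtain the cleaner and sharper bound $\|\mathsf{v}_1^{\mathrm{out}}-\mathsf{v}_2^{\mathrm{out}}\|\leq 2\|\mathsf{v}_1-\mathsf{v}_2\|$. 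Both arguments rest on the same underlying incompatibility (an inner ball at one point cannot overlap an outer ball at the other), but your packaging is coordinate-free, avoids the auxiliary plane and circle geometry, and gives a constant that is linear rather than square-root in $\useiconst{g:til}$. One cosmetic slip: with $\mathbf{d}=\mathsf{v}_2-\mathsf{v}_1$ you have $\mathsf{v}_1^{\mathrm{out}}-\mathsf{v}_2^{\mathrm{out}}=(\mathbf{n}_1-\mathbf{n}_2)-\mathbf{d}$, not $+\mathbf{d}$; this does not affect the final inequality.
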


\begin{proof}
 Consider the plane~$\mathcal{L}$ generated by points $\mathsf{v}_1$,
$\mathsf{v}_1^\text{\rm out}$, and
$\mathsf{v}_2^\text{\rm out}+(\mathsf{v}_1-\mathsf{v}_2)$,
see Figure~\ref{f:2spheres} (note that, as indicated on the
picture, $\mathsf{v}_2$ need not lie
on this plane).
\begin{figure}
\centering \includegraphics[width=.6\textwidth]{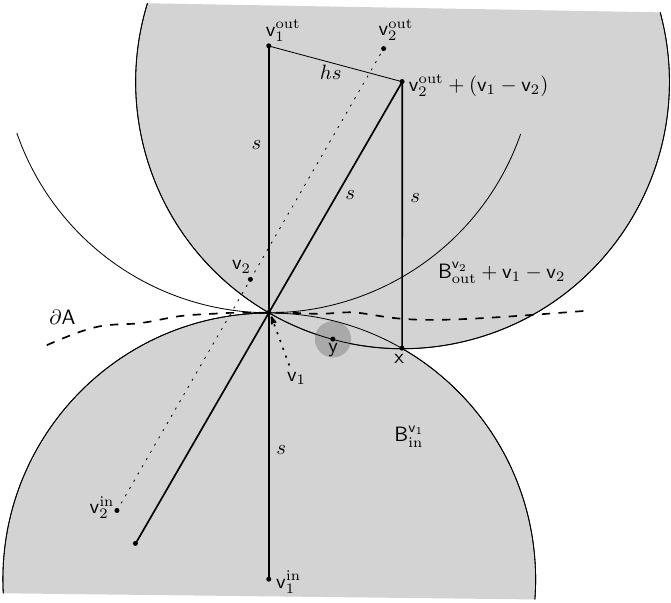}
  \caption{Depiction of the plane $\mathcal{L}$ on the proof of Lemma~\ref{l:reg_spheres}. The
radius of the small gray circle centered in~$\mathsf{y}$ is $as$;
also, on this picture the segment between $\mathsf{v}_2^\text{\rm in}$ and
$\mathsf{v}_2^\text{\rm out}$ (containing also $\mathsf{v}_2$)
does not have an intersection with the plane~$\mathcal{L}$. That is why $v_2$ appears not to intersect $\partial A$.
}
  \label{f:2spheres}
\end{figure}
Let $\mathsf{x}$ be the point that lies on the intersection of
$\partial\mathsf{B}^{\mathsf{v}_1}_\text{in}\cap
 (\partial\mathsf{B}^{\mathsf{v}_2}_\text{out}+{\mathsf{v}_1}-{\mathsf{v}_2})$
with $\mathcal{L}$ and is different from~$\mathsf{v}_1$,
and let~$\mathsf{y}$ be the middle point on the arc of the circle
$(\partial\mathsf{B}^{\mathsf{v}_2}_\text{out}+{\mathsf{v}_1}-{\mathsf{v}_2})
\cap\mathcal{L}$ between~$\mathsf{v}_1$ and~$\mathsf{x}$ (of course, we
mean the arc that lies inside $\mathsf{B}^{\mathsf{v}_1}_\text{in}$).
Abbreviate also
$h=s^{-1}\|\mathsf{v}_2^\text{\rm out}+
(\mathsf{v}_1-\mathsf{v}_2)-\mathsf{v}_1^\text{\rm out}\|$
and $a=s^{-1}d(\mathsf{y}, \partial\mathsf{B}^{\mathsf{v}_1}_\text{in})$;
 with some elementary geometry,
we obtain that
\[
h= 2\sqrt{a-\frac{a^2}{4}}.
\]
But, we must necessarily have
\[
 d(\mathsf{y}, \partial\mathsf{B}^{\mathsf{v}_1}_\text{in}) \leq
\|\mathsf{v}_1-\mathsf{v}_2\|,
\]
because otherwise the point $\mathsf{y}-\mathsf{v}_1+\mathsf{v}_2
\in \mathsf{B}^{\mathsf{v}_2}_\text{out}$ would also
belong to the interior of $\mathsf{B}^{\mathsf{v}_1}_\text{in}$,
a contradiction.
So, we have
\[
 h \leq 2\sqrt{\useiconst{g:til}-\frac{\useiconst{g:til}^2}{4}} =
 \frac{\sqrt{799}}{200},
\]
which means that
\[
 \|\mathsf{v}_1^\text{\rm out}-\mathsf{v}_2^\text{\rm out}\|
 < \Big(\frac{\sqrt{799}}{200}+\useiconst{g:til}\Big)s =
\useiconst{g:1/6}s.
\]
\end{proof}

The next lemma is a consequence of an obvious observation that
the boundary of discretized $s$-regular sets looks locally flat
for large~$s$:
\begin{lemma}
\label{l:locally_flat}
There exist (large enough) $s_0,h_0$ with the following properties.
 Assume that~$\mathsf{A}$ is $s$-regular for some $s \geq s_0$ and
$x,y\notin A$ are such that $\|x-y\|\leq 3\sqrt{d}$. Then
there exists a path between~$x$ and $y$ of length at most~$h_0$ that
does not intersect~$A$.
\end{lemma}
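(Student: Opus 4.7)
The plan is to use that, when $s\gg\sqrt d$, the boundary $\partial\mathsf{A}$ looks essentially flat at the lattice scale, so that $\R^d\setminus\mathsf{A}$ locally resembles a half-space. The key geometric ingredient is that, for $s\geq s_0(d)$, within any ball $\mathsf{B}(\mathsf{v},r)$ with $\mathsf{v}\in\partial\mathsf{A}$ and $r\leq s/2$, the surface $\partial\mathsf{A}$ is the graph of a smooth function~$f$ over the tangent hyperplane at~$\mathsf{v}$, with $|f(\mathsf{z}')|\leq \|\mathsf{z}'\|^2/s$ and $|\nabla f(\mathsf{z}')|\leq 2\|\mathsf{z}'\|/s$. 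This follows from the tangency of $\mathsf{B}^{\mathsf{v}}_{\mathrm{in}}$ and $\mathsf{B}^{\mathsf{v}}_{\mathrm{out}}$ at~$\mathsf{v}$, which sandwich $\partial\mathsf{A}$ between two paraboloids of curvature $O(1/s)$; the graph property (no multiple sheets over the tangent plane) follows from the fact, implicit in Lemma~\ref{l:reg_spheres}, that outer normals at nearby boundary points are nearly parallel.

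Once this is in hand, I would split into two cases. If $\mathsf{A}\cap \mathsf{B}(x,5\sqrt{d})=\varnothing$, then every lattice point in $\mathsf{B}(x,4\sqrt{d})$ lies in $\Z^d\setminus A$, and $x$, $y$ can be joined by an elementary lattice path of length bounded by a constant depending only on $d$. Otherwise I pick $\mathsf{v}\in\partial\mathsf{A}$ with $\|\mathsf{v}-x\|\leq 5\sqrt{d}$ (hence $\|\mathsf{v}-y\|\leq 8\sqrt{d}$), let $\mathbf{n}=(\mathsf{v}^{\mathrm{out}}-\mathsf{v})/s$ be the outward unit normal, and pick the coordinate direction $e_k$ maximising $|e_k\cdot\mathbf{n}|$, so that, after a possible sign flip, $e_k\cdot\mathbf{n}\geq 1/\sqrt{d}$. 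The path is then built in three pieces: a lattice \emph{lift} $x,\,x+e_k,\,\ldots,\,x+Me_k$ with $M=\lceil 3\sqrt{d}\rceil$, a short hop inside $\mathsf{B}^{\mathsf{v}}_{\mathrm{out}}$ from $x+Me_k$ to $y+Me_k$, and the reverse lift back down to $y$.

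The heart of the argument is to verify that each step of the lift stays in $\Z^d\setminus A$. In tangent coordinates at $\mathsf{v}$, each step gains at least $1/\sqrt{d}$ in the normal direction while shifting the tangent projection by at most $1$; over the $M$ steps the tangent projection therefore remains at distance $O(\sqrt{d})$ from the origin, so the gradient bound of the first paragraph implies that the height of $\partial\mathsf{A}$ above the base point changes by at most $O(\sqrt{d}/s)$ per step, which is strictly less than $1/\sqrt{d}$ for $s\geq s_0(d)$. Since $x\notin\mathsf{A}$ places us on or above the graph at $j=0$, we remain above it throughout, so $x+je_k\notin\mathsf{A}$ for $0\leq j\leq M$. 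A parallel direct computation places $x+Me_k$ and $y+Me_k$ well inside $\mathsf{B}^{\mathsf{v}}_{\mathrm{out}}\subseteq\R^d\setminus\mathsf{A}$, and the connecting hop between them is provided by the easy case of the previous paragraph applied inside that outer ball.

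The main obstacle is the graph-and-gradient statement of the first paragraph: one must rule out that $\partial\mathsf{A}\cap\mathsf{B}(\mathsf{v},s/2)$ contains several sheets (to have $f$ well defined) and control the tilt of the tangent plane across the patch. Both follow from the uniform bound on normal deviations provided by Lemma~\ref{l:reg_spheres}, but need to be spelled out carefully. Everything that comes afterwards is elementary combinatorial geometry, producing a total path length $2M+O(1)$, depending only on $d$, which gives the claimed constant $h_0$.
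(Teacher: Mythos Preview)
Your approach is correct and shares the same core idea as the paper's: both use that near a boundary point the set $\mathsf{A}$ is sandwiched between the tangent balls $\mathsf{B}_{\mathrm{in}}$ and $\mathsf{B}_{\mathrm{out}}$, and both single out a coordinate direction $e_k$ with $e_k\cdot\mathbf{n}\geq 1/\sqrt d$ to move through the complement. The paper's version is terser---it simply observes that no edge $\{v,v+e_k\}$ in a fixed cube~$G$ around the nearest boundary point can lie entirely in the thin slab $\R^d\setminus(\mathsf{B}_{\mathrm{in}}\cup\mathsf{B}_{\mathrm{out}})$, whence $G\setminus\mathsf{A}$ is connected---while you build the path explicitly via lift--hop--lift; one minor quantitative point is that $M=\lceil 3\sqrt d\rceil$ gives margin only about~$3$ from $\partial\mathsf{B}^{\mathsf{v}}_{\mathrm{out}}$ rather than the $5\sqrt d$ your middle hop needs, but taking~$M$ of order~$d$ repairs this without altering the argument.
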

\begin{proof}
 This result is fairly obvious, so we give only a sketch
of the proof (certainly, not the most ``economic'' one).
First, without restricting generality,
one can assume that
$\max( d(x,\mathsf{A}),  d(y,\mathsf{A}))<3\sqrt{d}$
(otherwise, the ball of radius $3\sqrt{d}$ centered in one of the points
does not have intersection with~$A$ and contains the other
point; then, use the fact that this discrete ball is a connected graph).
 Then, let $\mathsf{z}\in\partial\mathsf{A}$ be a point on the boundary
closest to~$x$, $z$ be the point in~$A$ closest to~$\mathsf{z}$,
and consider the cube
\[
 G = \{z'\in\Z^d : \|z'-z\|_\infty\leq \lceil 7\sqrt{d}\rceil\}
\]
(where $\|\cdot\|_\infty$ is the maximum norm).
Assume without loss of generality that the projection of the normal vector to $\partial\mathsf{A}$ at~$\mathsf{z}$ on the first coordinate vector is at least~$\frac{1}{\sqrt{d}}$. Then, the claim of the lemma follows once we prove that for all large enough~$s$
\begin{equation}
 \label{e:GAconn}
 \text{the set $G\setminus \mathsf{A}$ is connected.}
\end{equation}
Indeed, for $s$ large enough $\{v, v+e_1\}$ is not fully inside $\R^d\setminus (\mathsf{B}_\text{in}^{\mathsf{z}} \cup \mathsf{B}_\text{out}^{\mathsf{z}})$
for any~$v$ in~$G$. This implies that $G \setminus \mathsf{A}$ is given by $G \cap \mathsf{B}^z_{\text{out}}$ together with some extra points in the neighborhood of this set, implying \eqref{e:GAconn} and concluding the proof of the lemma.
\end{proof}

Observe that Lemma~\ref{l:locally_flat} implies that for any $x\in\partial A$ and $y\notin A$ such that $\|x-y\|\leq 2\sqrt{d}$, it holds that
\begin{equation}
\label{eq:entr_close}
P_y[X_{H_A}=x] \geq (2d)^{-h_0}.
\end{equation}


\newconstant{c:L34''}
\newconstant{c:L34'''}
\newconstant{c:LL34''}
\newconstant{c:LL34'''}
Next, we need an elementary result about escape probabilities from spheres:
\begin{lemma}
\label{l_hit_spheres}
 There exist positive constants $s_1,\useconstant{c:L34''},
\useconstant{c:L34'''}, \useconstant{c:LL34''},
\useconstant{c:LL34'''}$
(depending only on the dimension) such that
for all $\mathsf{y}\in\R^d$, for all
$s \geq s_1$
and every $x\in \bally(\mathsf{y},2 s) \setminus \bally(\mathsf{y},s)$,
we have
\begin{equation}
\label{eq_hit_spheres}
 \useconstant{c:L34''} \frac{\|x-\mathsf{y}\|-s}{s}
\leq P_x[H_{\bally(\mathsf{y},s)}>H_{\Z^d\setminus \bally(\mathsf{y},2s)}]
 \leq  \useconstant{c:L34'''} \frac{\|x-\mathsf{y}\|-s+1}{s},
\end{equation}
and for all $x\in \bally(\mathsf{y},3 s) \setminus \bally(\mathsf{y},s)$
\begin{equation}
\label{eq_hit_spheres'}
 \useconstant{c:LL34''} \frac{3s-\|x-\mathsf{y}\|}{s}
\leq P_x[H_{\bally(\mathsf{y},s)}<H_{\Z^d\setminus \bally(\mathsf{y},3s)}]
 \leq  \useconstant{c:LL34'''} \frac{3s-\|x-\mathsf{y}\|+1}{s}.
\end{equation}
\end{lemma}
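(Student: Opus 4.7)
The plan is to apply the optional stopping theorem to the approximately harmonic function $\tilde h(z) = \|z - \mathsf{y}\|^{2-d}$, which is continuously harmonic on $\R^d \setminus \{\mathsf{y}\}$. I focus on~\eqref{eq_hit_spheres}; inequality~\eqref{eq_hit_spheres'} will follow identically after replacing the outer radius $2s$ by $3s$. Set $r = \|x - \mathsf{y}\|$ and $\tau = H_{\bally(\mathsf{y},s)} \wedge H_{\Z^d \setminus \bally(\mathsf{y},2s)}$. A fourth-order Taylor expansion, combined with vanishing of the continuous Laplacian of $\tilde h$ away from $\mathsf{y}$, shows that the discrete Laplacian satisfies $|\Delta \tilde h(z)| \leq c_1 \|z - \mathsf{y}\|^{-d-2} \leq c_2 s^{-d-2}$ on the annulus. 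The classical exit-time estimate $E_x \tau \leq c_3 s^2$ then lets me apply optional stopping to the martingale $\tilde h(X_{n\wedge\tau}) - \sum_{k<n\wedge\tau}\Delta\tilde h(X_k)$ (passing $n \to \infty$ via dominated convergence, since $\tilde h$ is bounded on the annulus for $s \geq s_1$) to obtain
\[
 \tilde h(x) \;=\; E_x \tilde h(X_\tau) \;+\; O(s^{-d}).
\]

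At time $\tau$ the walker sits at a lattice point adjacent to the annulus from outside, so $\|X_\tau - \mathsf{y}\| \in [s - \sqrt d, s]$ on the inner-hit event and $\|X_\tau - \mathsf{y}\| \in (2s, 2s + \sqrt d]$ otherwise. Since the gradient of $\tilde h$ in this range is of order $s^{1-d}$, each boundary value $\tilde h(X_\tau)$ lies within $O(s^{1-d})$ of the corresponding radial constant $s^{2-d}$ or $(2s)^{2-d}$. Writing $q$ for the probability in~\eqref{eq_hit_spheres} and solving,
\[
 q \;=\; \frac{s^{2-d} - r^{2-d}}{s^{2-d} - (2s)^{2-d}} \;+\; O(s^{-1}),
\]
where the $O(s^{-1})$ absorbs both error sources after dividing by the denominator $\asymp s^{2-d}$. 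Substituting $u = r/s \in (1,2]$, the main term becomes $(1 - u^{2-d})/(1 - 2^{2-d})$, and since $u \mapsto 1 - u^{2-d}$ has bounded positive derivative on $[1,2]$, it is comparable to $u - 1 = (r-s)/s$ with constants depending only on $d$. This immediately yields the upper bound in~\eqref{eq_hit_spheres}, because $(r-s+1)/s \geq s^{-1}$ dominates the error.

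For the lower bound the same display settles the case $r - s \geq c_4$ with $c_4$ a fixed dimension-dependent constant. For smaller $r-s$ I will supply a one-step geometric argument: letting $i^*$ maximize $|(x-\mathsf{y}) \cdot e_i|$, the Cauchy--Schwarz bound gives $|(x-\mathsf{y}) \cdot e_{i^*}| \geq r/\sqrt d$, so the neighbor $x'$ of $x$ obtained by stepping in the outward sign direction along $e_{i^*}$ satisfies $\|x' - \mathsf{y}\|^2 \geq r^2 + 2r/\sqrt d + 1 \geq s^2 + 2s/\sqrt d + 1$, placing $x'$ at distance at least $s + c_5$ from $\mathsf{y}$ once $s$ is large. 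Conditioning on the first step going to $x'$ (probability $\geq 1/(2d)$) and then applying the bound already proven at $x'$ produces $q \geq c_6/s \geq c_6(r-s)/s$ whenever $r - s \leq 1$. The main subtlety I anticipate is keeping the two error sources---the martingale correction of size $O(s^{-d})$ and the boundary discretization of size $O(s^{1-d})$---small enough so that after division by $s^{2-d}$ they both remain $O(s^{-1})$; a rougher Taylor bound $|\Delta \tilde h| = O(s^{-d})$ would produce an $O(1)$ error which would swamp the main term $\asymp (r-s)/s$ entirely.
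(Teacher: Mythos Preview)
Your approach via the approximately harmonic function $\|z-\mathsf{y}\|^{2-d}$, the fourth-order Taylor bound $|\Delta \tilde h| = O(s^{-d-2})$, and the exit-time estimate $E_x\tau = O(s^2)$ is correct and yields $q = (1-u^{2-d})/(1-2^{2-d}) + O(s^{-1})$ as you claim. One small point: your boundary patch may need more than one step, since the gain $c_5 \approx 1/\sqrt d$ per step can be smaller than the threshold $c_4$ required for the main term to dominate the $O(s^{-1})$ error; iterating the step $\lceil c_4/c_5\rceil$ times (still a bounded, dimension-dependent number) fixes this cleanly.

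The paper takes a somewhat different and slicker route. Rather than using the single function $\|z-\mathsf{y}\|^{2-d}$ with error control, it observes by direct computation that $\|X_n-\mathsf{y}\|^{-(d-1)}$ is a \emph{super}martingale and $\|X_n-\mathsf{y}\|^{-(d-3)}$ is a \emph{sub}martingale on the annulus for $s$ large. Optional stopping then gives the exact sandwich
\[
 \frac{s^{-(d-1)}-r^{-(d-1)}}{s^{-(d-1)}-(2s+1)^{-(d-1)}}
  \;\leq\; q \;\leq\;
 \frac{(s-1)^{-(d-3)}-r^{-(d-3)}}{(s-1)^{-(d-3)}-(2s)^{-(d-3)}},
\]
with no $O(\cdot)$ correction at all, and both sides are elementarily of order $(r-s)/s$ down to $r-s=0^+$. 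So the paper avoids both the exit-time input and the case split near the inner sphere. On the other hand, your method delivers an actual asymptotic \emph{equality} for $q$ rather than just two-sided bounds, which is more informative even if not needed here.
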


\begin{proof}
By a direct calculation, it is elementary to obtain that,
for large enough~$s$ (not depending on~$\mathsf{y}$), the process
$\|X_{n\wedge H_{\bally(s)}}-\mathsf{y}\|^{-(d-1)}$ is a supermartingale,
and $\|X_{n\wedge H_{\bally(s)}}-\mathsf{y}\|^{-(d-\tfrac{5}{2})}$ is a submartingale,
see e.g.\ the proof of Lemma~1 in~\cite{HMP99}.
 From the Optional Stopping Theorem, we obtain that
\begin{equation}
\label{eq:auxL34}
 \frac{s^{-(d-1)}-\|x-\mathsf{y}\|^{-(d-1)}}{s^{-(d-1)}-(2s+1)^{-(d-1)}}
  \leq P_x[H_{\bally(s)}>H_{\Z^d\setminus \bally(2s)}] \leq \frac{(s-1)^{-d+\tfrac{5}{2}}-\|x-\mathsf{y}\|^{-d+\tfrac{5}{2}}}{(s-1)^{-d+\tfrac{5}{2}}-
(2s)^{-d+\tfrac{5}{2}}},
\end{equation}
where the above balls are centered in $\mathsf{y}$.
Then~\eqref{eq_hit_spheres} follows from~\eqref{eq:auxL34}
with the observation that $0<\frac{\|x - \mathsf{y}\|}{s}\leq 2$
and some elementary calculus. The proof of~\eqref{eq_hit_spheres'}
is completely analogous.

In fact, with some more effort,
one can obtain that $s_1=\frac{\sqrt{d}}{2}$ (observe that
$\bally(\mathsf{y},\frac{\sqrt{d}}{2})$ is nonempty for all
$\mathsf{y}\in\R^d$), but we do not need this stronger
fact for the present paper.
\end{proof}

We now need estimates on the entrance measure of a set in~$\Z^d$ which has been obtained from the discretization of a regular set $\mathsf{D} \subset \R^d$.
For this, we will need the following definitions.
Let $D = \mathsf{D} \cap \Z^d$ and fix $x \in \partial D$, we write $\mathsf{x}$ for the closest point to $x$ in $\partial \mathsf{D}$ (it can be chosen arbitrarily in case of ties) and note that $\lVert x - \mathsf{x} \rVert \leq 1$. We define $x^\text{in}$ and $x^\text{out}$ to be the closest points to $\mathsf{x}^\text{in}$ and $\mathsf{x}^\text{out}$ in $\Z^d$ (again chosen arbitrarily in case of ties). Observe that $\lVert x^\text{out} - \mathsf{x}^\text{out} \rVert$ is at most $\frac{\sqrt d}{2}$ (and the same holds for $x^\text{in}$
and $\mathsf{x}^\text{in}$).

\newconstant{c:L35'}
\newconstant{c:L35''}
\newconstant{c:L35'''}
\begin{lemma}
\label{l_entrance_measure}
\begin{itemize}
 \item[(i)] Suppose that $\mathsf{A}$ is an $s$-regular set for some $s \geq s_0+\sqrt{d}$ and $y \in \partial A,x\in \Z^d\setminus A$ are such that $\|x-y\|\geq 2s$. Then $P_{x}[X_{H_A}= y]\leq \useconstant{c:L35''} s^{-(d-1)}$.
 \item[(ii)] Assume that $\mathsf{A}$ is $s$-regular with $s \geq s_0+\sqrt{d}$ and $y \in \partial A$; then for every $x\in \bally(y^{out},s/2)$, we have $P_{x}[X_{H_A}= y,H_A<H_{\Z^d\setminus \bally(y^{out},s+\sqrt{d})}] \geq \useconstant{c:L35'''} s^{-(d-1)}$.
\end{itemize}
\end{lemma}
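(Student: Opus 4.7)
My plan is to prove both parts by reducing to well-known estimates on the discrete Poisson kernel of a genuine ball of radius $\asymp s$, using the inside and outside balls that $s$-regularity provides at $\mathsf{y}$.

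For the upper bound (i), I would let $B = \mathsf{B}^{\mathsf{y}}_{\text{in}} \cap \Z^d$ be the discretization of the inside ball tangent to $\partial \mathsf{A}$ at $\mathsf{y}$. Since $\mathsf{B}^{\mathsf{y}}_{\text{in}} \subset \bar{\mathsf{A}}$, we have $B \subseteq A$. The central observation is the pointwise domination of events $\{X_{H_A} = y\} \subseteq \{X_{H_{B \cup \{y\}}} = y\}$: a walk entering $A$ for the first time at $y$ cannot have visited $B$ (which lies inside $A$), so it enters $B \cup \{y\}$ for the first time at $y$ as well. It remains to bound $P_x[X_{H_{B \cup \{y\}}} = y]$, and here $B \cup \{y\}$ differs from a genuine discrete ball of radius $\asymp s$ by only $O(1)$ points (this is where $s \geq s_0 + \sqrt d$ is used, to ensure $y$ sits next to the discretized sphere of $\mathsf{B}^{\mathsf{y}}_{\text{in}}$). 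A standard upper estimate for the discrete Poisson kernel of a ball---of the form $P_x[X_{H_{\bally(z_0,s)}} = z'] \leq c\, s^{-(d-1)}$ uniformly in $z' \in \partial \bally(z_0,s)$ and $x \notin \bally(z_0,s)$---then gives the claim, since $\|x-y\| \geq 2s$ guarantees $x \notin B \cup \{y\}$.

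For the lower bound (ii), I would work symmetrically with the outside ball $B' = \mathsf{B}^{\mathsf{y}}_{\text{out}} \cap \Z^d$, which by $s$-regularity is disjoint from $A$. Any starting point $x$ with $\|x - y^{\text{out}}\| \leq s/2$ lies well inside $B'$, and before hitting $A$ the walk must exit $B'$. Observing that $y \notin B'$ (since $y \in A$) but that $y$ is adjacent to $B'$ (a local geometric fact near the tangency point $\mathsf{y}$, valid for $s$ large), and that $B' \subseteq \bally(y^{\text{out}}, s + \sqrt d)$ while $\|y - y^{\text{out}}\| \leq s + \sqrt d/2$, I can bound
\[
  P_x\bigl[X_{H_A} = y,\, H_A < H_{\Z^d \setminus \bally(y^{\text{out}}, s + \sqrt d)}\bigr] \;\geq\; P_x\bigl[X_{H_{\Z^d\setminus B'}} = y\bigr].
\]
The right-hand side is the exit distribution of the walk from a discrete ball of radius $s$, started at distance $\leq s/2$ from the center, and is bounded below by $c\,s^{-(d-1)}$ by the matching lower estimate on the interior Poisson kernel of a ball.

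The main difficulty lies in controlling the discretization near $\mathsf{y}$: the sets $B$ and $B'$ are not literally balls of radius $s$, and one must verify that $y$ sits in the correct position relative to them (on the external boundary of $B$ in (i); on the external boundary of $B'$ with a neighbor inside $B'$ in (ii)). This uses $s$-regularity and the assumption $s \geq s_0 + \sqrt d$ to ensure the local picture near the tangency point is close enough to the flat case, in the same spirit as Lemma~\ref{l:locally_flat}. Once these geometric checks are done, the remainder reduces to standard discrete potential theory on balls, and both bounds follow.
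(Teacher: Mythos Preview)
Your proposal is correct and follows essentially the same route as the paper. The paper also reduces both parts to discrete Poisson kernel estimates for a ball (citing Proposition~6.5.4 and Lemma~6.3.7 of~\cite{LL10}) via the inside and outside balls supplied by $s$-regularity, and it handles the discretization near~$\mathsf{y}$ exactly as you indicate, by passing to a slightly shrunken lattice ball and invoking the short-path comparison of Lemma~\ref{l:locally_flat} (through~\eqref{eq:entr_close}). The only cosmetic difference is that for~(i) the paper compares $P_x[X_{H_A}=y]$ to $P_x[X_{H_{B^y_{\text{in}}}}=y']$ for a nearby boundary point~$y'$ of the shrunken inside ball, whereas you phrase the reduction as the event inclusion $\{X_{H_A}=y\}\subseteq\{X_{H_{B\cup\{y\}}}=y\}$; to bound the right-hand side rigorously one still needs the same short-path step to transfer to a genuine lattice-ball boundary point, so the two arguments coincide.
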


\begin{proof}
Given $\mathsf{A}$ and $y \in \partial A$ as above,
recall that~$\mathsf{y}$ stands for the closest point to~$y$
in~$\partial \mathsf{A}$ (chosen arbitrarily in case of ties). By Definition~\ref{d:reg}, we know that the ball $\mathsf{B}^{\mathsf{y}}_{\text{in}} \subset \R^d$ of radius~$s$
lies fully inside $\mathsf{A}$.
 Moreover, since $y^\text{in}$ is at distance at most $\frac{\sqrt{d}}{2}$
  from $\mathsf{y}^\text{in}$, we conclude that
\begin{equation}
 \label{e:BinB}
 B^y_\text{in} := \bally\Big(y^\text{in}, s
- \frac{\sqrt{d}}{2}\Big)
 \subseteq \mathsf{B}^{\mathsf{y}}_{\text{in}},
\text{ and also }
 B^y_\text{out} := \bally\Big(y^\text{out}, s
- \frac{\sqrt{d}}{2}\Big)
\subseteq \mathsf{B}^{\mathsf{y}}_{\text{out}},
\end{equation}
see Figure~\ref{f:discrete}.

\begin{figure}
\centering \includegraphics[width=.5\textwidth]{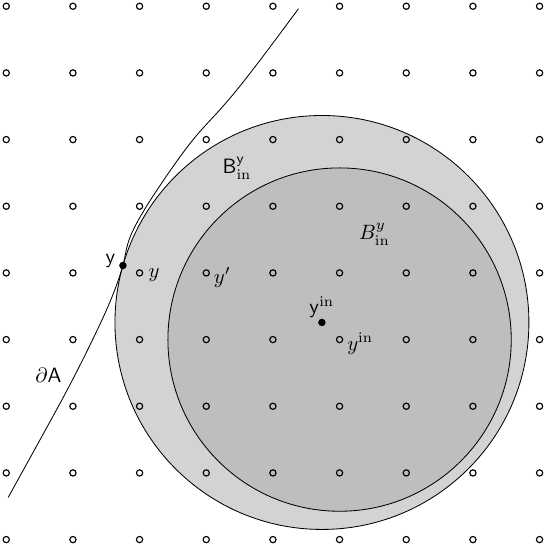}
  \caption{On the proof of Lemma~\ref{l_entrance_measure}}
  \label{f:discrete}
\end{figure}
Let $y'\in \partial B^y_\text{in}$ be the point which is closest to~$y$
(it could happen that $y'$ is $y$ itself).
By construction we have
$ d(y,B^y_\text{in})\leq \sqrt{d}$,
therefore $\|y-y'\|\leq\frac{3}{2}\sqrt{d}$, and
so by Lemma~\ref{l:locally_flat} we have
\newconstant{c:yyprime}
\begin{equation}
 \label{e:yyprime}
 P_{x}[X_{H_B} = y'] \geq \useconstant{c:yyprime} P_{x}[X_{H_A}= y].
\end{equation}
Employing Proposition~6.5.4 of~\cite{LL10}, we obtain that
\begin{equation}
 P_{x}[X_{H_B}= y'] \leq \useconstant{c:L35''} s^{-(d-1)},
\end{equation}
which together with~\eqref{e:yyprime} proves (i).

A discretization argument analogous to the above gives~(ii) for
all $x\in \bally(y^{out},\frac{s}{2}-\frac{\sqrt{d}}{4})$ as a direct consequence of Lemma~6.3.7 of~\cite{LL10}; then, using Lemma~\ref{l:locally_flat}, we obtain the desired
statement for all $x\in \bally(y^{out},\frac{s}{2})$.
\end{proof}

Next, aiming to the proof of~\eqref{hip1}, we formulate and prove the following result:
\newconstant{c:P33'}
\newconstant{c:P33}
\begin{proposition}
\label{p_entrance}
There exist constants $s_0,\useiconst{reg8},\useiconst{reg10} > 0$
such that if $s\geq s_0$,
$\mathsf{A} \subset \R^d$ is $\useiconst{c:reg} s$-regular
and if $y_1,y_2 \in \partial A$
are such that $\|y_1 - y_2\| \leq \useiconst{reg8}s$, then
there exists a set $\hat D$ (depending on $y_1,y_2$) that separates $\{y_1,y_2\}$ from
$\partial \bally(y_1,2\useiconst{c:reg}s)$ (i.e., any nearest-neighbor path starting at
$\partial \bally(y_1,2\useiconst{c:reg}s)$ that enters~$A$ at $\{y_1,y_2\}$, must pass through $\hat D$)
such that
\begin{equation}
\label{eq_entrance}
  \sup_{\substack{x\in\hat D;\\ P_x [X_{H_{A}} = y_1]>0}}
\frac{P_x [X_{H_{A}} = y_2]}
{P_x [X_{H_{A}} = y_1, H_{A}<H_{\Z^d\setminus
\bally(y_1,\frac{5}{2}\useiconst{c:reg} s)}]}
\leq \useiconst{reg10}.
\end{equation}
\end{proposition}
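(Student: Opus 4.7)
The strategy is to take $\hat D$ to be the internal vertex boundary of $\bally(y_1,\useiconst{c:reg}s)$ restricted to $A^c$; with this choice the separation requirement is built in by construction, and the $\useiconst{c:reg}s$-regularity of $\mathsf A$ allows Lemma~\ref{l_entrance_measure} to deliver matching upper and lower bounds of order $(\useiconst{c:reg}s)^{-(d-1)}$ on the numerator and denominator of~\eqref{eq_entrance}. Concretely, I would fix $\useiconst{reg8}\leq \min(\useiconst{c:reg}/4,\ \useiconst{g:til}\useiconst{c:reg})$ and set $\hat D:=\partial\bally(y_1,\useiconst{c:reg}s)\setminus A$. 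The bound $\|y_1-y_2\|\leq \useiconst{reg8}s$ keeps $y_1,y_2$ strictly interior to $\bally(y_1,\useiconst{c:reg}s)$, while Lemma~\ref{l:reg_spheres} produces $\|y_1^{\text{out}}-y_2^{\text{out}}\|\leq \useiconst{g:1/6}\useiconst{c:reg}s$. The separation is then immediate: any nearest-neighbor path starting on $\partial\bally(y_1,2\useiconst{c:reg}s)$ that stays in $A^c$ until entering $A$ at $y_1$ or $y_2$ must have a step crossing from outside to inside $\bally(y_1,\useiconst{c:reg}s)$, and the endpoint of that step lies in $\hat D$ by definition.

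For the upper bound in~\eqref{eq_entrance}, every $x\in\hat D$ satisfies $\|x-y_2\|\geq \useiconst{c:reg}s-1-\useiconst{reg8}s\geq \useiconst{c:reg}s/3$ once $s_0$ is large. Since $\mathsf A$ is $\useiconst{c:reg}s$-regular, it is also $s'$-regular for $s'=\useiconst{c:reg}s/6$, and Lemma~\ref{l_entrance_measure}(i) at this scale gives $P_x[X_{H_A}=y_2]\leq \useconstant{c:L35''}(s')^{-(d-1)}=: C_1(\useiconst{c:reg}s)^{-(d-1)}$. For the lower bound on the denominator, I would factor through the strong Markov property at $H_B$ with $B:=\bally(y_1^{\text{out}},\useiconst{c:reg}s/2)$: a uniform estimate
\[
P_x[H_B<H_A\wedge H_{\Z^d\setminus\bally(y_1,2\useiconst{c:reg}s)}]\geq p_0>0
\]
combines with Lemma~\ref{l_entrance_measure}(ii) applied from any point of $B$ (which yields probability at least $\useconstant{c:L35'''}(\useiconst{c:reg}s)^{-(d-1)}$ of entering $A$ at $y_1$ before exiting $\bally(y_1^{\text{out}},\useiconst{c:reg}s+\sqrt d)$). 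Since $\|y_1-y_1^{\text{out}}\|\leq \useiconst{c:reg}s+\sqrt d/2$, for $s\geq s_0$ large we have $\bally(y_1^{\text{out}},\useiconst{c:reg}s+\sqrt d)\subset\bally(y_1,\tfrac52\useiconst{c:reg}s)$, so the denominator is at least $p_0\useconstant{c:L35'''}(\useiconst{c:reg}s)^{-(d-1)}$. Taking $\useiconst{reg10}:=C_1/(p_0\useconstant{c:L35'''})$ closes the argument.

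The main obstacle I expect is establishing the uniform positive lower bound $p_0$ above. This is a discrete Harnack-type estimate in the domain $\bally(y_1,2\useiconst{c:reg}s)\setminus A$, whose boundary inherits uniform interior/exterior ball control from the $\useiconst{c:reg}s$-regularity of $\mathsf A$. The cleanest treatment is through a rescaling/invariance-principle argument: divide everything by $s$, so that the rescaled domain converges to a smooth bounded continuous domain in which the analogous Brownian-motion hitting probability of the rescaled $B$ from any starting point is strictly positive by connectedness, and transfer this back to the walk provided $s_0$ is taken sufficiently large. A more combinatorial alternative would chain an $O(1)$ number of overlapping macroscopic balls of radius $\Theta(\useiconst{c:reg}s)$ lying inside $A^c\cap\bally(y_1,2\useiconst{c:reg}s)$ that connect a neighborhood of $x$ to $B$, applying Lemma~\ref{l_hit_spheres} at each link.
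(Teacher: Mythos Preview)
Your choice of $\hat D=\partial\bally(y_1,\useiconst{c:reg}s)\setminus A$ does give the separation property, and your upper bound on $P_x[X_{H_A}=y_2]$ via Lemma~\ref{l_entrance_measure}(i) is correct. The gap is in the lower bound: the uniform constant $p_0>0$ you need for $P_x[H_B<H_A\wedge H_{\Z^d\setminus\bally(y_1,2\useiconst{c:reg}s)}]$ does not exist. The sphere $\partial\bally(y_1,\useiconst{c:reg}s)$ meets $\partial\mathsf A$ (since both $\mathsf B^{\mathsf y_1}_{\text{in}}\subset\mathsf A$ and $\mathsf B^{\mathsf y_1}_{\text{out}}\subset\mathsf A^c$ have their centers on that sphere), so $\hat D$ contains points $x$ with $h:=\dist(x,A)$ of order~$1$. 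For such~$x$, Lemma~\ref{l_hit_spheres} gives $P_x[H_B<H_A]=O(h/s)$, not a uniform positive number. Your proposed fixes do not help: under rescaling by~$s$ these points converge to $\partial\mathsf A$, where the limiting Brownian motion enters $\mathsf A$ instantaneously, so the invariance-principle argument returns~$0$; and any chain of macroscopic balls in $A^c$ starting near such an~$x$ must have its first ball of radius $O(h)$, again producing the factor $h/s$ at the first link.

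The paper resolves this by proving bounds that both carry the factor $\dist(z,A)$, namely
\[
P_z[X_{H_A}=y_1,\,H_A<H_{\Z^d\setminus\bally(y_1,\tfrac{5}{2}\useiconst{c:reg}s)}]\ \geq\ c\,\frac{\dist(z,A)}{s^d},
\qquad
P_z[X_{H_A}=y_2]\ \leq\ c'\,\frac{\dist(z,A)}{s^d},
\]
so the ratio is bounded uniformly even when $\dist(z,A)$ is small. The lower bound comes from first escaping to $\bally(\mathsf y^{\text{out}},\useiconst{c:reg}s/3)$ (probability $\asymp\dist(z,A)/s$ by Lemma~\ref{l_hit_spheres}) and then applying Lemma~\ref{l_entrance_measure}(ii). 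For the upper bound one needs that points of $\hat D$ close to~$A$ are nonetheless at distance $\Theta(s)$ from~$y_2$; the paper ensures this by taking $\hat D$ to be the outer boundary of a thin ``collar'' $D=\{z\notin A:\dist(z,\mathsf A)\leq\tfrac12\useiconst{reg8}s,\ \dist(z,y_k)\leq 2\useiconst{reg8}s\}$, so that any $z\in\hat D$ with small $\dist(z,A)$ is forced to satisfy $\dist(z,y_2)\gtrsim\useiconst{reg8}s$, and then Lemma~\ref{l_hit_spheres} plus Lemma~\ref{l_entrance_measure}(i) give the matching upper bound. Your $\hat D$ would also work with this two-sided $\dist(z,A)/s^d$ argument, but not with the uniform-$p_0$ route you sketched.
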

Let us already mention that the constants
$\useiconst{reg8},\useiconst{reg10}$ here are exactly those that we need
in Proposition~\ref{p:fatA}.

\begin{proof}[Proof of Proposition~\ref{p_entrance}]
Define
\begin{equation}
\label{def_s}
 s_2 = \max\big\{\useiconst{c:reg}^{-1}s_0, \;
 36(\useiconst{c:reg}\useiconst{g:til})^{-1}(s_1+\sqrt{d})\big\}
\geq 18(\useiconst{c:reg}\useiconst{g:til})^{-1}.
\end{equation}
Also, we define $\useiconst{reg8}=\frac{1}{3}\useiconst{c:reg}\useiconst{g:til}$.
Given $y_1$ and $y_2$ in $\partial A$ such that $\|y_1-y_2\|<\useiconst{reg8}s$, let us define
\begin{equation}
 D = \Big\{z \in \Z^d \setminus A:
  d(z,\mathsf{A})\leq
\frac{1}{2}\useiconst{reg8}s \text{ and }
    d(z,y_k)\leq 2\useiconst{reg8}s, k=1,2\Big\},
\end{equation}
and
\begin{equation}
  \label{e:Ghat}
  \hat D = \big\{ z \in D: \text{ there exists }v \in \Z^d \setminus
  (A \cup D) \text{ such that $z$ and $v$ are neighbors} \big\},
\end{equation}
see Figure~\ref{f_entrance_prob}. Intuitively speaking, $\hat D$
is the part of the boundary of~$D$ not adjacent to~$A$.

We now claim that
\begin{equation}
 \label{e:hatGfar}
 \begin{array}{c}
  \text{all sites of~$\hat D$ are at distance at
 least
$\displaystyle\frac{1}{2}\useiconst{reg8}s - 1$
 from $\{y_1,y_2\}$.}
 \end{array}
\end{equation}
To see why this is true, observe first that for $z \in \hat D$, the point $v \leftrightarrow z$ as in~\eqref{e:Ghat} is not in~$D$.
Therefore, we either have $ d(v,\{y_1,y_2\}) > 2\useiconst{reg8}s$
or $ d(v, \mathsf{A}) \geq \frac{1}{2}\useiconst{reg8}s$,
in both cases~\eqref{e:hatGfar} holds.

In fact, to prove~\eqref{eq_entrance},
it is enough to prove that for all $z\in\hat D$
\newconstant{c:hitfrom_z'}
\newconstant{c:hitfrom_z''}
\begin{equation}
\label{hit_from_z1}
P_x [X_{H_{A}} = y_1, H_{A}<H_{\Z^d\setminus
\bally(y_1,\frac{5}{2}\useiconst{c:reg} s)}] \geq \useconstant{c:hitfrom_z'}
 \frac{ d(z,A)}{s^d}
\end{equation}
and
\begin{equation}
\label{hit_from_z2}
 P_z[X_{H_A}=y_2]
  \leq \useconstant{c:hitfrom_z''}\frac{ d(z,A)}{s^d}.
\end{equation}
 The idea behind these two bounds is depicted on
 Figure~\ref{f_entrance_prob}, which we now turn into a rigorous proof.
\begin{figure}
\centering \includegraphics[width=0.8\textwidth]{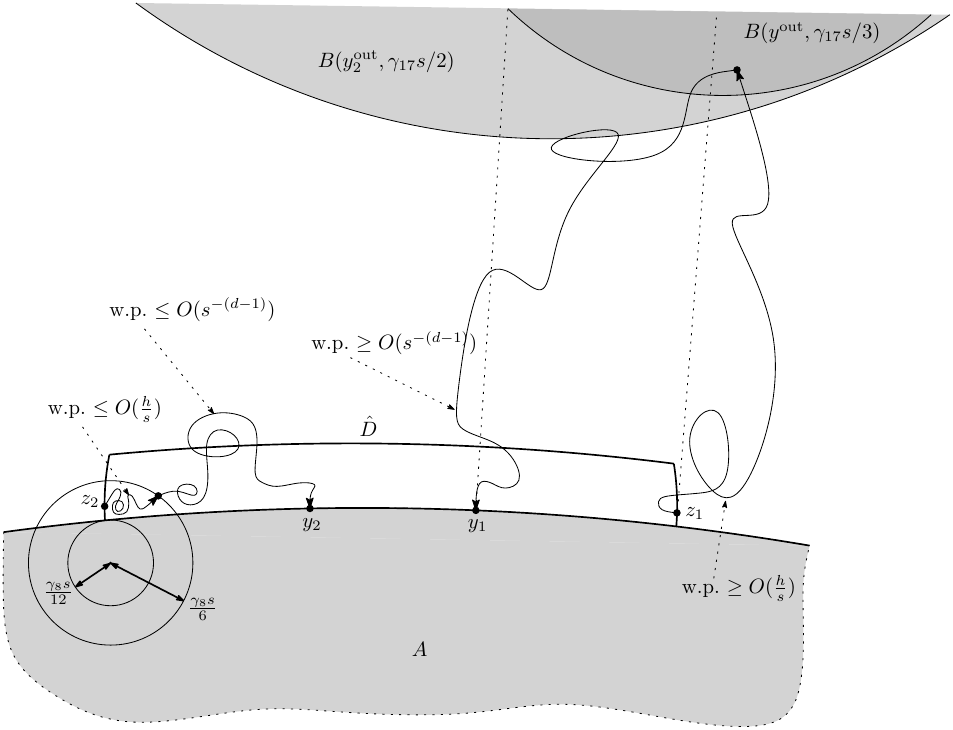}
  \caption{On the proof of Proposition~\ref{p_entrance}: lower
bound for $P_{z_1}[X_{H_A}=y_1, \dots]$ and upper
bound for $P_{z_2}[X_{H_A}=y_2]$; we have $h\simeq  d(z_{1,2},A)$,
and ``w.p.'' stands for ``with probability''.
}
  \label{f_entrance_prob}
\end{figure}
To obtain~\eqref{hit_from_z1},
we proceed in the following way.
Consider some $\mathsf{y} \in \partial A$
such that $ d(z,A)\geq \| z - \mathsf{y}\|$, and observe
that~\eqref{eq_hit_spheres'} implies that
\newconstant{c:dist/s}
\begin{equation}
\label{e:p26}
 P_z[H_{\bally(\mathsf{y}^\text{out},\useiconst{c:reg} s/3)}<H_A]
      \geq \frac{\useconstant{c:dist/s} d(z,A)}{s}.
\end{equation}
Let $\mathsf{y}_k\in \partial A$ be the closest boundary point to~$y_k$;
clearly, we have $\|\mathsf{y}_k-y_k\|\leq 1$.
Then, it holds that $\|y_1-z\|\leq 2\useiconst{reg8}s$ and
$\|z-\mathsf{y}\|\leq \frac{1}{2}\useiconst{reg8}s$,
and thus, by~\eqref{def_s}
\[
 \|\mathsf{y}_1-\mathsf{y}\| \leq \frac{5}{2} \useiconst{reg8}s+1 < 3\useiconst{reg8}s
 = \useiconst{c:reg}\useiconst{g:til}s.
\]
So, by Lemma~\ref{l:reg_spheres} it holds that
$\|\mathsf{y}^{\text{out}}-\mathsf{y}_1^\text{out}\|
\leq \useiconst{c:reg}\useiconst{g:1/6} s < \frac{1}{6}\useiconst{c:reg}s$,
which implies that
$\mathsf{B}(\mathsf{y}^\text{out},\frac{1}{3}\useiconst{c:reg}s)
\subset \mathsf{B}(\mathsf{y}_1^\text{out},\frac{1}{2}\useiconst{c:reg}s)$.
Observing that $\bally(y_1^\text{out},\useiconst{c:reg}s+\sqrt{d})
\subset \bally(y_1,\frac{5}{2}\useiconst{c:reg} s)$
Applying Lemma~\ref{l_entrance_measure}~(ii)
and using~\eqref{e:p26}, we obtain~\eqref{hit_from_z1}.

To prove~\eqref{hit_from_z2}, we proceed in the following way.
Recall that if a set is $r$-regular then it is $r'$-regular
for all $r'\leq r$; so, if $ d(z,A)\geq \frac{1}{3}\useiconst{reg8}s$
then Lemma~\ref{l_entrance_measure}~(i) already implies~\eqref{hit_from_z2}.
Assume now that $z\in\hat D$ is such that
$ d(z,A) < \frac{1}{3}\useiconst{reg8}s$ and
let $\mathsf{y}\in\partial \mathsf{A}$
be such that $ d(z,A)\geq\|z-\mathsf{y}\|$.
Let us show that then we have
$\|\mathsf{y}-y_2\|\geq \frac{1}{2}\useiconst{reg8}s$.
Indeed, by construction of~$\hat D$ there exists $v\notin A\cup G$
with $\|z-v\|=1$ and
such that either $ d(v,A)\geq \frac{1}{2}\useiconst{reg8}s$
or $\min(\|v-y_1\|,\|v-y_2\|)>2\useiconst{reg8}s$.
The first possibility is ruled out since then we would have
$ d(z,A)>\frac{1}{2}\useiconst{reg8}s-1$ which contradicts to
$ d(z,A) < \frac{1}{3}\useiconst{reg8}s$ because
of~\eqref{def_s}. Then, the second alternative implies
that $\|v-y_2\|>\useiconst{reg8}s$, so
$\|z-y_2\|>\useiconst{reg8}s-1$. This means that
\[
 \|\mathsf{y}-y_2\|\geq \useiconst{reg8}s-1 -
   \frac{1}{3}\useiconst{reg8}s \geq \frac{1}{2}\useiconst{reg8}s
\]
again because of~\eqref{def_s}.

Let $\hat{\mathsf{v}}$ be the center of the ball with
radius~$\frac{1}{12}\useiconst{reg8}s$
that touches $\partial \mathsf{A}$ at~$\mathsf{y}$ from inside; we
have by~\eqref{def_s} that
\[
 \inf_{v'\in
\mathsf{B}(\hat{\mathsf{v}},\frac{1}{6}\useiconst{reg8}s)}
\|v'-y_2\|
\geq \frac{1}{2}\useiconst{reg8}s-\frac{1}{4}\useiconst{reg8}s-1
\geq \frac{1}{12}\useiconst{reg8}s+1.
\]
Then, one can write
\newconstant{c:endproof'}
\newconstant{c:endproof''}
\[
 P_z[X_{H_A}=y_2]
\leq
   P_z[H_{\Z^d\setminus \bally(\hat{\mathsf{v}},
  \frac{1}{6}\useiconst{reg8}s)}<
H_{\bally(\hat{\mathsf{v}}, \frac{1}{12}\useiconst{reg8}s)}]
\sup_{z': \|z'-y_2\|\geq \frac{1}{12}\useiconst{reg8}s}
        P_{z'}[X_{H_A}=y_2],
\]
and use Lemma~\ref{l_hit_spheres} to obtain that the
first term in the right-hand side of the above display is at most
$\useconstant{c:endproof'}s^{-1} d(z,A)$.
By Lemma~\ref{l_entrance_measure}~(i),
 the second term is bounded from above by
 $\useconstant{c:endproof''}s^{-(d-1)}$.
This concludes the proof of~\eqref{hit_from_z2} and hence
of Proposition~\ref{p_entrance}.
\end{proof}

We now collect the ingredients necessary
for the proof of Proposition~\ref{p:fatA}:
\begin{itemize}
 \item as already mentioned just before Lemma~\ref{l:reg_spheres},
the sets $A^{(s)}_{1,2}$ are the discretizations of
the sets $\mathsf{A}^{(s)}_{1,2}$
provided by Lemma~\ref{l_separate};
 \item we take the same $s_2$ provided by~\eqref{def_s} and
define $\useiconst{regular} = \frac{1}{2}\useiconst{c:reg}$;
 \item existence of $\useiconst{reg2}$ suitable for~\eqref{hip2}
and~\eqref{hip3} follows then from Lemma~\ref{l_entrance_measure};
 \item the claim~\eqref{hip1} follows from Proposition~\ref{p_entrance},
with the right constants $\useiconst{reg8},\useiconst{reg10}$,
as we already mentioned.
\end{itemize}
So, the only unattended item in Proposition~\ref{p:fatA}
is~\eqref{hip4}.
But it is straightforward to obtain~\eqref{hip4} from a
projection argument: let $\mathsf{y}\in\partial\mathsf{A}_k^{(s)}$
be a closest point to $y\in\partial A_k^{(s)}$ and assume without
lost of generality that the projection of the normal vector at~$\mathsf{y}$
to the first coordinate is at least $\frac{1}{\sqrt{d}}$.
Then, the intersection of projections of
$\mathsf{B}^{\mathsf{y}}_\text{in}\cap \mathsf{B}(\mathsf{y},
\useiconst{reg8}s-1)$ and
$\mathsf{B}^{\mathsf{y}}_\text{out}\cap \mathsf{B}(\mathsf{y},
\useiconst{reg8}s-1)$ along the first coordinate axis contains
a ($(d-1)$-dimensional) ball of radius~$O(s)$, and this proves~\eqref{hip4}
(since on the preimage of each integer point which lies within
 this intersection there
should be at least one point of $\partial A_k^{(s)}\cap
\bally(y,\useiconst{reg8}s)$).
This concludes the
proof of Proposition~\ref{p:fatA}.
\hfill $\Box$

\bigskip
\footnotesize
\noindent\textit{Acknowledgments.}
The authors are very grateful to Caio Alves, Pierre-François Rodriguez and the anonymous referee, who carefully read the manuscript, making many useful suggestions and corrections.
S.P.\ was partially supported by FAPESP (grant 2009/52379--8) and CNPq (grant 300886/2008--0).
A.T.\ received support from CNPq (grant 306348/2012--3).
The authors thank IMPA and IMECC--UNICAMP for financial support and hospitality during their mutual visits.

\bibliographystyle{plain}
\bibliography{all}

\def\cprime{$'$}
\begin{thebibliography}{10}

\bibitem{Bel11}
David Belius.
\newblock {Cover levels and random interlacements.}
\newblock {\em Ann. Appl. Probab.}, 22(2):522--540, 2012.

\bibitem{Bel12}
David {Belius}.
\newblock {Gumbel fluctuations for cover times in the discrete torus.}
\newblock {\em {Probab. Theory Relat. Fields}}, 157(3-4):635--689, 2013.

\bibitem{CTW10}
Ji{\v{r}}{\'{\i}} {\v{C}}ern{\'y}, Augusto Teixeira, and David Windisch.
\newblock Giant vacant component left by a random walk in a random
  {$d$}-regular graph.
\newblock {\em Ann. Inst. Henri Poincar\'e Probab. Stat.}, 47(4):929--968,
  2011.

\bibitem{CP12}
Ji\v{r}\'\i{} {\v{C}}ern\'y and Serguei Popov.
\newblock On the internal distance in the interlacement set.
\newblock {\em Electron. J. Probab.}, 17:no. 29, 1--25, 2012.

\bibitem{HMP99}
F.~den Hollander, M.V. Menshikov, and S.Yu. Popov.
\newblock A note on transience versus recurrence for a branching random walk in
  random environment.
\newblock {\em J. Statist. Phys.}, 95(3-4):587--614, 1999.

\bibitem{DRS12}
Alexander {Drewitz}, Bal\'azs {R\'ath}, and Art\"em {Sapozhnikov}.
\newblock {Local percolative properties of the vacant set of random
  interlacements with small intensity.}
\newblock {\em {Ann. Inst. Henri Poincar\'e, Probab. Stat.}}, 50(4):1165--1197,
  2014.

\bibitem{FP99}
P.~J. Fitzsimmons and Jim Pitman.
\newblock Kac's moment formula and the {F}eynman-{K}ac formula for additive
  functionals of a {M}arkov process.
\newblock {\em Stochastic Process. Appl.}, 79(1):117--134, 1999.

\bibitem{Gri99}
Geoffrey Grimmett.
\newblock {\em Percolation}, volume 321 of {\em Grundlehren der Mathematischen
  Wissenschaften [Fundamental Principles of Mathematical Sciences]}.
\newblock Springer-Verlag, Berlin, second edition, 1999.

\bibitem{L91}
Gregory~F. Lawler.
\newblock {\em Intersections of random walks}.
\newblock Probability and its Applications. Birkh{\"a}user Boston Inc., Boston,
  MA, 1991.

\bibitem{LL10}
Gregory~F. Lawler and Vlada Limic.
\newblock {\em Random walk: a modern introduction}, volume 123 of {\em
  Cambridge Studies in Advanced Mathematics}.
\newblock Cambridge University Press, Cambridge, 2010.

\bibitem{Men86}
M.~V. Men{\cprime}shikov.
\newblock Coincidence of critical points in percolation problems.
\newblock {\em Dokl. Akad. Nauk SSSR}, 288(6):1308--1311, 1986.

\bibitem{MP13}
Mikhail {Menshikov} and Serguei {Popov}.
\newblock {On range and local time of many-dimensional submartingales.}
\newblock {\em {J. Theor. Probab.}}, 27(2):601--617, 2014.

\bibitem{PSSS11}
Yuval Peres, Alistair Sinclair, Perla Sousi, and Alexandre Stauffer.
\newblock Mobile geometric graphs: detection, coverage and percolation.
\newblock In {\em Proceedings of the {T}wenty-{S}econd {A}nnual {ACM}-{SIAM}
  {S}ymposium on {D}iscrete {A}lgorithms}, pages 412--428, Philadelphia, PA,
  2011. SIAM.

\bibitem{RS12}
Bal{\'a}zs R{\'a}th and Art{\"e}m Sapozhnikov.
\newblock On the transience of random interlacements.
\newblock {\em Electron. Commun. Probab.}, 16:379--391, 2011.

\bibitem{RS12b}
Bal\'azs {R\'ath} and Art\"em {Sapozhnikov}.
\newblock {The effect of small quenched noise on connectivity properties of
  random interlacements.}
\newblock {\em {Electron. J. Probab.}}, 18:20, 2013.

\bibitem{R08}
Sidney~I. Resnick.
\newblock {\em Extreme values, regular variation and point processes}.
\newblock Springer Series in Operations Research and Financial Engineering.
  Springer, New York, 2008.
\newblock Reprint of the 1987 original.

\bibitem{Ros82}
Yu.~A. Rozanov.
\newblock {\em Markov random fields}.
\newblock Applications of Mathematics. Springer-Verlag, New York, 1982.
\newblock Translated from Russian by Constance M. Elson.

\bibitem{SS09}
Vladas Sidoravicius and Alain-Sol Sznitman.
\newblock Percolation for the vacant set of random interlacements.
\newblock {\em Comm. Pure Appl. Math.}, 62(6):831--858, 2009.

\bibitem{SS10}
Vladas Sidoravicius and Alain-Sol Sznitman.
\newblock Connectivity bounds for the vacant set of random interlacements.
\newblock {\em Ann. Inst. Henri Poincar\'e Probab. Stat.}, 46(4):976--990,
  2010.

\bibitem{S09b}
Alain-Sol Sznitman.
\newblock On the domination of random walk on a discrete cylinder by random
  interlacements.
\newblock {\em Electron. J. Probab.}, 14:no. 56, 1670--1704, 2009.

\bibitem{Szn09b}
Alain-Sol Sznitman.
\newblock Random walks on discrete cylinders and random interlacements.
\newblock {\em Probab. Theory Related Fields}, 145(1-2):143--174, 2009.

\bibitem{S09}
Alain-Sol Sznitman.
\newblock Upper bound on the disconnection time of discrete cylinders and
  random interlacements.
\newblock {\em Ann. Probab.}, 37(5):1715--1746, 2009.

\bibitem{Szn09}
Alain-Sol Sznitman.
\newblock Vacant set of random interlacements and percolation.
\newblock {\em Ann. of Math. (2)}, 171(3):2039--2087, 2010.

\bibitem{Szn09c}
Alain-Sol Sznitman.
\newblock A lower bound on the critical parameter of interlacement percolation
  in high dimension.
\newblock {\em Probab. Theory Related Fields}, 150(3-4):575--611, 2011.

\bibitem{Szn11}
Alain-Sol Sznitman.
\newblock On the critical parameter of interlacement percolation in high
  dimension.
\newblock {\em Ann. Probab.}, 39(1):70--103, 2011.

\bibitem{Szn12}
Alain-Sol Sznitman.
\newblock Decoupling inequalities and interlacement percolation on
  $\mathcal{G}\times \mathbb{Z}$.
\newblock {\em Inventiones Mathematicae}, 187:645--706, 2012.

\bibitem{Szn12b}
Alain-Sol Sznitman.
\newblock An isomorphism theorem for random interlacements.
\newblock {\em Electron. Commun. Probab.}, 17:no. 9, 1--9, 2012.

\bibitem{Szn12a}
Alain-Sol {Sznitman}.
\newblock {On ${(\mathbb Z / N\mathbb Z)}^2$-occupation times, the Gaussian
  free field, and random interlacements.}
\newblock {\em {Bull. Inst. Math., Acad. Sin. (N.S.)}}, 7(4):565--602, 2012.

\bibitem{Tei09}
A.~Teixeira.
\newblock Interlacement percolation on transient weighted graphs.
\newblock {\em Electron. J. Probab.}, 14:no. 54, 1604--1628, 2009.

\bibitem{Tei09b}
Augusto Teixeira.
\newblock On the uniqueness of the infinite cluster of the vacant set of random
  interlacements.
\newblock {\em Ann. Appl. Probab.}, 19(1):454--466, 2009.

\bibitem{T10}
Augusto Teixeira.
\newblock On the size of a finite vacant cluster of random interlacements with
  small intensity.
\newblock {\em Probab. Theory Related Fields}, 150(3-4):529--574, 2011.

\bibitem{TW10}
Augusto Teixeira and David Windisch.
\newblock On the fragmentation of a torus by random walk.
\newblock {\em Comm. Pure Appl. Math.}, 64(12):1599--1646, 2011.

\bibitem{EJP309}
Boris Tsirelson.
\newblock Brownian local minima, random dense countable sets and random
  equivalence classes.
\newblock {\em Electron. J. Probab.}, 11:no. 7, 162--198, 2006.

\bibitem{Win08}
David Windisch.
\newblock Random walk on a discrete torus and random interlacements.
\newblock {\em Electron. Commun. Probab.}, 13:140--150, 2008.

\end{thebibliography}

\end{document}